\documentclass[a4paper,12pt]{article}

\usepackage[utf8]{inputenc}
\usepackage{epsfig,graphics}
\usepackage{amssymb,amsmath,amsthm}
\usepackage{array,longtable,multirow}
\usepackage{authblk}
\usepackage[margin=30mm]{geometry}

\hyphenation{wa-ve-lets}
\hyphenation{multi-re-so-lu-tion}
\hyphenation{di-men-si-ona-li-ty}
\hyphenation{fo-cu-ses}
\hyphenation{a-na-ly-sis}
\hyphenation{Fi-gu-re}
\hyphenation{si-tua-ti-on}

\newtheorem{thm}{Theorem}
\newtheorem{lem}[thm]{Lemma}

\theoremstyle{definition}
\newtheorem{defn}[thm]{Definition}

\title{ Option Pricing under Multifactor Black--Scholes Model Using Orthogonal Spline Wavelets }


            

\date{}

\begin{document}

\begin{center}
    
{\LARGE Option Pricing under Multifactor Black--Scholes Model Using Orthogonal Spline Wavelets }

\vspace{5mm}

{\Large Dana \v{C}ern\'a\footnote{Department of Mathematics and Didactics of Mathematics, Technical University of Liberec, Studentská 2, Liberec, 461 17, Czech Republic}, 
Kate\v{r}ina Fi\v{n}kov\'a\footnote{Institute of Information Technology and Electronics, Technical University of Liberec, Studentská 2, Liberec, 461 17,  Czech Republic} }

\end{center}

\vspace{3mm}

\begin{abstract}
The paper focuses on pricing European-style options on several underlying assets under the Black--Scholes model represented by a nonstationary partial differential equation. The proposed method combines the Galerkin method with $L^2$-orthogonal sparse grid spline wavelets and the Crank--Nicolson sche\-me with Rannacher time-stepping. To this end, we construct an orthogonal cubic spline wavelet basis on the interval satisfying homogeneous Dirichlet boundary conditions and design a wavelet basis on the unit cube using the sparse tensor product. The method brings the following advantages. First, the number of basis functions is significantly smaller than for the full grid, which makes it possible to overcome the so-called ``curse of dimensionality." Second, some matrices involved in the computation are identity matrices, which significantly simplifies and streamlines the algorithm, especially in higher dimensions. Further, we prove that discretization matrices have uniformly bounded condition numbers, even without preconditioning, and that the condition numbers do not depend on the dimension of the problem. Due to the use of cubic spline wavelets, the method is higher-order convergent. Numerical experiments are presented for options on the geometric average.
\end{abstract}

\noindent {\it Keywords:} Black--Scholes model,  European option,  orthogonal spline wavelet, sparse grid, wavelet-Galerkin method, condition number 
\section{Introduction}

Methodology for the valuation of options has undergone rapid development in recent years;  nowadays, various option pricing models and numerical methods are available to determine the fair option price. This paper focuses on the famous Black--Scholes model \cite{Black1973}, which assumes that prices of underlying assets follow a log-normal distribution with constant drift and volatility. The Black--Scholes model is used for various types of options such as vanilla options, basket options, options on maximum or minimum of several assets, options on the geometric average, and real options.

Because the Black--Scholes model is a basic model for option pricing, the numerical valuation of options under this model is an active field of research, and there are a great many articles concerning this topic, such as the finite difference method studied in \cite{Lee2022}, higher-order spline methods designed in \cite{Kadalbajoo2012,Mohammadi2015,Roul2022}, and radial basis function approach examined in \cite{Milovanovic2020}. However, it is well known that when the Black--Scholes model is generalized to higher dimensions, e.g., dimension $d \geq 3$, the so-called ``curse of dimensionality" causes the number of degrees of freedom to grow exponentially with the number of dimensions. Therefore, the numerical solution of the model is more complicated, and far less literature has been devoted to this problem, see, e.g., \cite{Achdou2005,Hu2018,Leentvaar2008,Kim2016,Lyu2021}.
Wavelets have also been used for option pricing under the Black--Scholes model with one or more underlying assets, such as the B-spline wavelet-Galerkin method combined with discontinuous Galerkin time discretization \cite{Hilber2013}, an adaptive wavelet method \cite{Rometsch2010,Kestler2013}, the Legendre wavelet method \cite{Doostaki2022}, a method based on the wavelet approximation and the characteristic function \cite{Ortiz2013}, and the Shannon wavelet inverse Fourier technique~\cite{Ortiz2016}. 

In this paper, we deal with the Black--Scholes model for pricing multi-asset European options. For $d$ asset option, this model is represented by a nonstationary differential equation with $d$ state variables. First, we adjust the equation using the transformation into logarithmic prices, drift removal, approximation of the unbounded domain, and setting appropriate boundary conditions. 

Because the properties of Galerkin-type methods strongly depend on the wavelet basis used, we focus on constructing an appropriate one, namely an $L^2$-orthogonal cubic spline wavelet basis.
 In \cite{Donovan1996}, it was proved that if $\tilde{V}_j$ is a multiresolution analysis for the space $L^2 \left( \mathbb{R} \right)$, then there exists an orthogonal multiresolution analysis $V_j$ for the space $L^2 \left( \mathbb{R} \right)$ such that
\begin{equation} \label{intertwinning_MRA}
    \tilde{V}_q \subset V_0 \subset \tilde{V}_{q+n}
\end{equation}
for some $q \geq 0$ and $n \geq 1$, and construction principles of such a multiresolution analysis were introduced.
Based on \cite{Donovan1996}, orthogonal wavelets were constructed in \cite{Dijkema2009,Rupp2013}. However, the general method from \cite{Donovan1996} does not lead to uniquely determined scaling functions,
because it leads to the system of quadratic equations with more than one solution and because the part of the method is the orthogonalization of particular functions, which also does not lead to unique functions. In  \cite{Donovan1999}, the construction of $L^2$-orthogonal spline wavelets corresponding to the $L^2$-orthogonal spline functions from \cite{Donovan1996} was proposed. However, the resulting wavelet bases are not all equally suitable for use in numerical mathematics. While the $L^2$ condition number is one due to the $L^2$ orthogonality, the $H^1$ condition numbers differ for various wavelet bases. However, the $H^1$ condition number is also important because it influences the condition number of discretization matrices and the number of iterations needed to resolve the differential equation with the required accuracy. Furthermore, to be able to use the basis for the solution of the differential equation on a bounded domain, it is necessary to adapt the basis of $L^2 \left( \mathbb{R} \right)$ to a bounded interval. 

In this paper, we present a construction of orthogonal cubic scaling functions and wavelets on the real line. Then, we construct special boundary scaling functions and wavelets to adapt the basis from the real line to the interval. Finally, we use the sparse tensor product to create a wavelet basis. We show that when normalized in the $H^1$-seminorm, the resulting basis is well conditioned with respect to the $H^1$-seminorm, and study the properties of the basis. The coefficients of the constructed scaling functions and wavelets are presented in Appendix~A. 

Then, we propose and study a wavelet-based method for the multidimensional Black--Scholes model. The method combines the Galerkin method using sparse tensor product $L^2$-orthogonal spline wavelet bases and the Crank-Nicolson with Rannacher time-stepping. Due to the sparse tensor product structure and the orthogonality of the bases, the resulting scheme is greatly simplified compared to the standard Galerkin method as well as many other methods; it even overcomes the curse of dimensionality. The simplification is attributable to some of the matrices involved in computations being identity matrices. It is known that wavelet-based methods for certain types of equations lead to uniformly conditioned matrices when diagonal preconditioning is used \cite{Dahmen1992}. In this paper, we prove that the matrices are uniformly conditioned, even without preconditioning, and that the condition numbers can be bounded independently on the dimension of the problem. Because the matrices are symmetrical and positive definite, the resulting system can be solved using the conjugate gradient method. The uniform boundedness of condition numbers implies that the numbers of iterations are also bounded independently of the size of the matrix and the dimension of the problem. Another advantage is high-order convergence of the proposed method with constructed cubic spline-wavelet basis.

The paper is organized as follows. Section~\ref{Section_model} introduces the model and adjustment of the equation. Section~\ref{Section_construction} presents the assumption on the wavelet basis, construction of orthogonal cubic spline wavelet basis on the real line, its adaptation to a bounded interval and boundary conditions, and construction of wavelet basis on the unit cube using the sparse tensor product. The wavelet-based method is proposed and analyzed in Section~\ref{Section_method}. Numerical results are provided for options on the geometric average in Section~\ref{Section_examples}. 

\section{ Multifactor Black--Scholes model}
\label{Section_model}

The multi-dimensional Black--Scholes model is a generalization of the original Black--Scholes model for one-asset options designed in \cite{Black1973}. For an option on $d$ assets, the Black--Scholes model assumes that the price $S_i$ of the $i$-th underlying asset at time $\tilde{t}$ follows the diffusion process
\begin{equation} \label{diffusion_proces}
    \frac{ {\rm d} S_i }{ S_i } = \mu_i \, {\rm d} \tilde{t} + \sigma_i \, {\rm d} W_i, \quad i=1, \ldots, d;
\end{equation}
see also \cite{Achdou2005, Berridge2004, Hilber2013, Jo2013, Leentvaar2008}. The parameter $\mu_i$ represents the drift rate of the price $S_i$, and $W_i$ is the standard Brownian motion with volatility $\sigma_i$. The Wiener processes $W_i$ and $W_j$ are correlated with a correlation coefficient $\rho_{ij}$. Therefore, the corresponding covariance matrix $\mathbf{Q}$ is a $d \times d$ matrix with entries $\mathbf{Q}_{ij} = \rho_{ij} \sigma_i \sigma_j$. 

Using the stochastic equation (\ref{diffusion_proces}) and It\^{o} calculus, a deterministic equation for the expected value $V$ of the option is obtained. Let $r$ be a risk-free interest rate, $T$ be the maturity date, and $t= T - \tilde{t}$ be time to maturity.
Then, the fair value of the option $V \left(S_1, \ldots, S_d, t\right)$ corresponding to prices of underlying assets $S_i$ at the time to maturity $t$ can be computed as the solution of the partial differential equation
\begin{equation} \label{Black_Scholes}
\frac{ \partial V}{\partial t} =  \frac{1 }{2} \sum_{i=1}^d \sum_{j=1}^d \rho_{i,j} \sigma_i \sigma_j S_i S_j \frac{ \partial^2 V}{ \partial S_i \partial S_j}
   + r \sum_{i=1}^d S_i \frac{ \partial V}{\partial S_i } - r V,
\end{equation}
where $S_i > 0$ and $t \in \left( 0, T \right)$. Equation (\ref{Black_Scholes}) is degenerate in the sense that some second-order terms vanish when $S_i = 0$.

The equation has to be equipped with appropriate initial and boundary conditions, which depend on the option type. The initial condition represents the value of the option at maturity and has the form
\begin{equation}
V\left(S_1, \ldots, S_d,  0 \right)= V_0 \left( S_1, \ldots, S_d \right), \quad S_i > 0,
\end{equation}
where $V_0$ is the payoff function. Examples of payoff functions are given in Section~\ref{Section_examples}. Boundary conditions are discussed for transformed and localized equations further below.

\subsection{Transformation into log prices and drift re\-mo\-val}

Due to the degeneracy of equation (\ref{Black_Scholes}), it is not possible to derive a variational formulation and perform the analysis in the standard Sobolev spaces; special weighted Sobolev spaces must be used instead, see
\cite{Achdou2005}. Therefore, it is beneficial to remove this degeneracy first. It is well-known \cite{Achdou2005,Hilber2013} that this can be done using substitution into logarithmic prices $y_i = \log S_i$, $i=1, \ldots, d$. Then, the function 
\begin{equation}
U \left( y_1, \ldots, y_d, t\right) = V \left( e^{y_1}, \ldots, e^{y_d}, t \right) =   V \left( S_1, \ldots, S_d, t \right)
\end{equation}
representing the option value in logarithmic prices is the solution to the transformed equation 
\begin{equation} \label{eq_log_prices}
\frac{ \partial U}{\partial t} =  \frac{1 }{2} \sum_{i=1}^d \sum_{j=1}^d \rho_{i,j} \sigma_i \sigma_j \frac{ \partial^2 U}{ \partial y_i \partial y_j}
   + \sum_{i=1}^d \left( r - \frac{ \sigma_i^2 }{2} \right) \frac{ \partial U}{\partial y_i } - r U.         
\end{equation}
 Now, the differential operator on the right-hand side of equation (\ref{eq_log_prices}) is an elliptic operator with constant coefficients.

First-order terms in equation (\ref{eq_log_prices}) represent drift and  can be removed using another substitution, $x_i = y_i - b_i t$,
$b_i = \sigma_i^2 / 2 - r$, $i=1, \ldots, d.$ Then, a function
\begin{equation} 
    W \left( x_1, \ldots, x_d, t \right) = U \left( x_1 + b_1 t, \ldots, x_d + b_d t, t \right) = U \left( y_1, \ldots, y_d, t \right)
\end{equation}
is a solution to a symmetric problem
\begin{equation} \label{eq_BS_symmetric_1}
 \frac{ \partial W}{\partial t} = \frac{1 }{2} \sum_{i=1}^d \sum_{j=1}^d \rho_{i,j} \sigma_i \sigma_j \frac{ \partial^2 W}{ \partial x_i \partial x_j} - r W.
\end{equation}

\subsection{Localization}

In order to solve equation (\ref{eq_BS_symmetric_1}) numerically, it is convenient to approximate the unbounded domain $\mathbb{R}^d$ for $\left( x_1, \ldots, x_d \right)$ with a bounded domain.
Therefore, we choose minimal and maximal values for $S_i$, denote them $S_i^{min}$ and $S_i^{max}$, respectively, and define
\begin{equation}
x_i^{min} = \ln S_i^{min}, \quad x_i^{max} = \ln S_i^{max}, \quad 
I_i = \left( x_i^{min}, x_i^{max} \right).
\end{equation}

Then, the equation to be solved is
\begin{equation} \label{eq_bounded_domain_1}
 \frac{ \partial W}{\partial t} - \frac{1 }{2} \sum_{i=1}^d \sum_{j=1}^d \rho_{i,j} \sigma_i \sigma_j \frac{ \partial^2 W}{ \partial x_i \partial x_j} + r W = 0, \quad x_i \in I_i, \quad t \in \left( 0, T \right).
\end{equation}

In Section~\ref{Section_construction}, a wavelet basis $\Psi$ on
$\Omega = \left( 0, 1 \right)^d$ is designed. To solve (\ref{eq_bounded_domain_1}), we can either use a linear transformation to convert $\Psi$ to the product domain $I_1 \times I_2 \times \ldots \times I_d$ or transform (\ref{eq_bounded_domain_1}) to $\Omega$. Here, we use the second approach. Let $d_i = x_i^{max} - x_i^{min}$ and
\begin{equation} 
    u \left( x_1, \ldots, x_d, t \right) = W \left( \frac{x_1 - x_1^{min}}{d_1}, \ldots, \frac{ x_d - x_d^{min} }{d_d} \right).
\end{equation}
The function $u$ is a solution to 
\begin{equation} \label{eq_bounded_domain_2}
 \frac{ \partial u}{\partial t} - \mathcal{ D } \left( u \right) = 0, 
 \quad \left( x_1, \ldots, x_d \right) \in \Omega, \quad t \in \left( 0, T \right),
\end{equation}
where
\begin{equation} \label{definition_operator_D}
    \mathcal{ D } \left( u \right) = \sum_{i=1}^d \sum_{j=1}^d P_{i,j}  \frac{ \partial^2 u}{ \partial x_i \partial x_j} - r u, \quad
    P_{i,j} = \frac{ \rho_{i,j} \sigma_i \sigma_j }{ 2 \, d_i d_j}.
\end{equation}
The initial condition is
\begin{equation} \label{init_condition}
    u \left(  x_1, \ldots, x_d, 0 \right) = u_0 \left( x_1, \ldots, x_d \right), \quad  \left( x_1, \ldots, x_d \right) \in \Omega,
\end{equation}
where $u_0$ is the transformed payoff function, 
\begin{equation} 
    u_0 \left(  x_1, \ldots, x_d \right) = V_0 \left( \exp \left( \frac{x_1 - x_1^{min} }{ d_1 }  \right), \ldots, \exp \left(  \frac{ x_d - x_d^{min} }{ d_d } \right) \right).
\end{equation}

Prescribing appropriate boundary conditions is a crucial task because exact values of $u$ on the boundary $\partial \Omega$ of a domain $\Omega$ are generally not known, and the computation of approximate values on the boundary can be quite complicated. Therefore, it is convenient to approximate the boundary conditions by homogeneous Dirichlet boundary conditions
\begin{equation} \label{boundary_conditions}
   u \left(  x_1, \ldots, x_d, t \right) = 0, \quad  \left( x_1, \ldots, x_d \right) \in \partial \Omega, \quad t \in \left( 0, T \right).
\end{equation}
However, because these boundary conditions are artificial and are not satisfied by the original function $u$, the error can be substantial in the vicinity of $\partial \Omega$. Thus, to obtain a sufficiently accurate solution in a chosen region of interest in $\Omega$, it is essential to choose sufficiently large $\Omega$, see \cite{Hilber2013}.

\section{ Orthogonal Spline Wavelets }
\label{Section_construction}

The efficiency of Galerkin-type methods depends crucially on the used basis functions. As already mentioned, this paper focuses on
the wavelet-Galerkin method, and thus, the appropriate wavelet basis is needed. In this section, we make assumptions on the wavelet basis on the bounded interval, and we give a detailed description of a construction of a wavelet basis meeting all the assumptions, specifically an orthogonal cubic spline wavelet basis. First, orthogonal cubic scaling functions and wavelets on the real line are designed. Special boundary scaling functions and wavelets are then proposed to adapt the basis from the real line to the bounded interval and to the homogeneous Dirichlet boundary conditions. Finally, the sparse tensor product is used to construct a wavelet basis on the product domain $\Omega$. We show that the resulting basis is well conditioned with respect to the $H^1$-seminorm and study its properties.

Let $\left\langle \cdot, \cdot \right\rangle$ and $\left\| \cdot \right\|$ denote the inner product and the norm of the space $L^2 \left( \Omega \right)$, respectively. Let $\left\| \cdot \right\|_s$ be the norm of the Sobolev space $H^s \left( \Omega \right)$, and $\left| \cdot \right|_1$ be the seminorm in $H^1 \left( \Omega \right)$. The space of functions from $H^1 \left( \Omega \right)$ with zero trace on $\partial \Omega$ is denoted $H_0^1 \left( \Omega \right)$. 

 Furthermore, let $\mathcal{J}$ be an index set such that each index $\lambda \in \mathcal{J}$ takes the form $\lambda = \left(j,k\right)$, and let $\left| \lambda \right|=j$ represent a level. For $\mathbf{v}=\left\{ v_{\lambda} \right\}_{\lambda \in \mathcal{J}}$, $v_{\lambda} \in \mathbb{R}$, define
\begin{equation}
\left\| \mathbf{v} \right\|_2= \left( \sum_{\lambda \in \mathcal{J}} 
 \left| v_{\lambda} \right|^2 \right)^{1/2}, \quad
l^2 \left( \mathcal{J} \right)=\left\{ \mathbf{v}: \left\| \mathbf{v} \right\|_2 < \infty \right\}.
\end{equation}

We make assumptions on a family of functions $\Psi=\left\{  \psi_{\lambda}, \lambda \in \mathcal{J} \right\}$ to be a suitable orthogonal spline-wavelet basis for 
$L^2 \left( I \right)$, where $I$ is a bounded interval or $I = \mathbb{R}$.

\begin{itemize}
\item[$A1)$] {\bf $L^2$ orthogonality.} The set $\Psi$ is an orthogonal basis of
$L^2 \left( I \right)$, and basis functions are normalized, i.e., $\left\langle \psi_{\lambda}, \psi_{\lambda} \right\rangle = 1$ and $\left\langle \psi_{\lambda}, \psi_{\mu} \right\rangle = 0,$ if $\lambda \neq \mu.$ 

\item[$A2)$] {\bf Locality.}
 For all $\lambda \in \mathcal{J}$, a diameter of the support of $\psi_{\lambda}$ is bounded by  $C 2^{-\left| \lambda \right|}$ with $C$ independent of $\left| \lambda \right|$ and at a given level, the supports of only finitely many wavelets
overlap at any point $x$. 

\item[$A3)$] {\bf Spline basis.} Basis functions are continuous  piecewise-polynomial functions of order $p \geq 2$. 

\item[$A4)$] {\bf Hierarchical structure.} 
The set $\Psi$ has a hierarchical structure
\begin{equation} \label{definition_Psi}
\Psi=\Phi_{j_0} \cup \bigcup_{j=j_0}^{\infty} \Psi_j,
\end{equation}
where
\begin{equation} 
\Phi_{j_0}=\left\{ \phi_{j_0,k}, k \in \mathcal{I}_{j_0} \right\}, \quad \Psi_{j}=\left\{ \psi_{j,k}, k \in \mathcal{J}_j\right\}.
\end{equation}
Functions $\phi_{j,k}$ are called {\it scaling functions}, and functions $\psi_{j,k}$ are called {\it wavelets}. Furthermore, assume that all the wavelets $\psi_{j,k}$ are translations and dilations of certain wavelet generators $\psi_l$,
i.e., they are of the form
\begin{equation} \label{psi_jk_translations_dilations}
\psi_{j,k} = 2^{j/2} \psi_l \left( 2^j \cdot - m \right),    
\end{equation}
where $l$ and $m$ are integers dependent on $k$.

\item[$A5)$] {\bf Vanishing moments.} 
We assume that wavelets~$\psi_{j,l}$ have $L \geq 1$ vanishing moments, i.e.,
for any polynomial $p$ of degree less than $L$, 
\begin{equation} \label{vanishing_moments}
\int_{ I }
 p \left( x \right) \, \psi_{j,l} \left(x\right) dx =0, \quad l \in \mathcal{J}_j, \ j \geq j_0.
\end{equation}
In some constructions, condition~(\ref{vanishing_moments}) cannot be satisfied by so-called boundary wavelets. In these cases, we admit an exception and we require that (\ref{vanishing_moments}) be satisfied only by inner wavelets.
\end{itemize}

Furthermore, we need to work with spaces $H_0^1 \left( I \right)$, where $I$ is a bounded interval. First, recall a definition of a Riesz basis.

\begin{defn}
Let $H$ be a separable Hilbert space with the norm $\left\| \cdot \right\|_H$. The set $\Gamma = \left\{ \gamma_{\lambda}, \lambda \in \mathcal{J} \right\}$ is called a {\it Riesz basis} of $H$ if
the closure of the span of $\Gamma$ is $H$ and there exist constants $c,C \in \left( 0, \infty \right)$ such that for all $\mathbf{b} = \left\{ b_{\lambda}\right\}_{\lambda \in \mathcal{J}} \in l^2 \left( \mathcal{J} \right)$, we have
\begin{equation} \label{riesz}
c \left\| \mathbf{ b} \right\|_2 \leq \left\| \sum_{\lambda \in \mathcal{J}} b_{\lambda} \gamma_{\lambda} \right\|_H \leq C \left\| \mathbf{b} \right\|_2. 
\end{equation}
Constant $c_{\psi}:=\text{sup} \left\{c: c \ \text{satisfies} \ (\ref{riesz}) \right\}$ is called {\it a lower Riesz bound}, constant $C_{\psi}:=\text{inf} \left\{C: C \ \text{satisfies} \ (\ref{riesz}) \right\}$ is called {\it an upper Riesz bound}, and $cond \ \Psi = C_{\psi}/c_{\psi}$ is called {\it a condition number} of $\Psi$.

If $\Gamma$ satisfies (\ref{riesz}), but the closure of its span is not necessarily dense in $H$, then $\Gamma$ is called a {\it Riesz sequence} in $H$. 
\end{defn}

Now, we require that functions from $\Psi$ normalized in the $H^1$-norm be a Riesz basis of the space $H_0^1 \left( I \right)$. Thus, we formulate  another assumption $A6)$.

\begin{itemize}
\item[$A6)$] { \bf $H^1$ Riesz basis.} The set $\left\{ \psi_{\lambda} / \left\| \psi_{\lambda} \right\|_{1}, \psi_{\lambda} \in \Psi \right\}$ is a Riesz basis of the space $H_0^1 \left( I \right)$. 
\end{itemize}

The construction of wavelets typically starts with a multiresolution analysis of the space $L^2 \left( \mathbb{R} \right)$.

\begin{defn}
The sequence $\left\{ V_j \right\}_{ j \in \mathbb{Z} }$ of closed subspaces in $L^2 \left( \mathbb{R} \right)$ is called {\it a multiresolution analysis (MRA)} of multiplicity $p$, if
the following conditions are satisfied:
\begin{itemize}
    \item[i)] $V_j \subset V_{j+1}$ for all $j \in \mathbb{Z}$.
    \item[ii)] For all $j \in \mathbb{Z}$, $f \in V_j $ if and only if $f \left( 2^{-j} \cdot \right) \in V_0$.
    \item[iii)] $ \bigcup\limits_{ j \in \mathbb{Z} } V_j $ is dense in $L^2 \left( \mathbb{R} \right)$.
    \item[iv)] $ \bigcap\limits_{ j \in \mathbb{Z} } V_j = \left\{ 0 \right\}$.
    \item[v)] There exist functions $\phi_1, \ldots, \phi_p$ such that
    \begin{equation}
       \Phi_0 = \left\{ \phi_i \left( \cdot - k \right), i=1, \ldots, p, k \in \mathbb{Z} \right\} 
    \end{equation}
    is a Riesz basis of $V_0$.
\end{itemize}
Functions $\phi_i$, $i=1, \ldots, p,$ are said to {\it generate} the multiresolution analysis $\left\{ V_j \right\}_{j \in \mathbb{Z}}$ and are called {\it scaling generators}. If $\Phi_0$ is an orthonormal basis of~$V_0$, then $\left\{ V_j \right\}_{j \in \mathbb{Z}}$ is called {\it an orthogonal multiresolution analysis}.
\end{defn}

\subsection{ Construction of orthogonal cubic spline scaling basis }

In this section, we use some of the general design principles from \cite{Donovan1996} to create a scaling basis. We start with the well-known scaling generators already used in \cite{Cerna2016, Dahmen2000}. Let $\xi_1$ and $\xi_2$ be Hermite cubic splines defined as
\begin{equation}
\xi_1(x) = \left\{ \begin{array}{cl}
\left(x+1\right)^2 \left( 1 - 2x \right), & x \in [-1,0], \\
\left(1-x\right)^2 \left( 1 + 2x \right), & x \in [0,1], \\
0, & {\rm otherwise}, 
\end{array} \right.
\end{equation}
and
\begin{equation}
\xi_2(x) = \left\{ \begin{array}{cl}
\left(x+1\right)^2 x, & x \in [-1,0], \\
\left(1-x\right)^2 x, & x \in [0,1], \\
0, & {\rm otherwise}. 
\end{array} \right. 
\end{equation}
Figure~\ref{Fig1} displays graphs of functions $\xi_1$ and $\xi_2$. 

\begin{figure}[ht!]
\centering
\includegraphics[width=13.0cm]{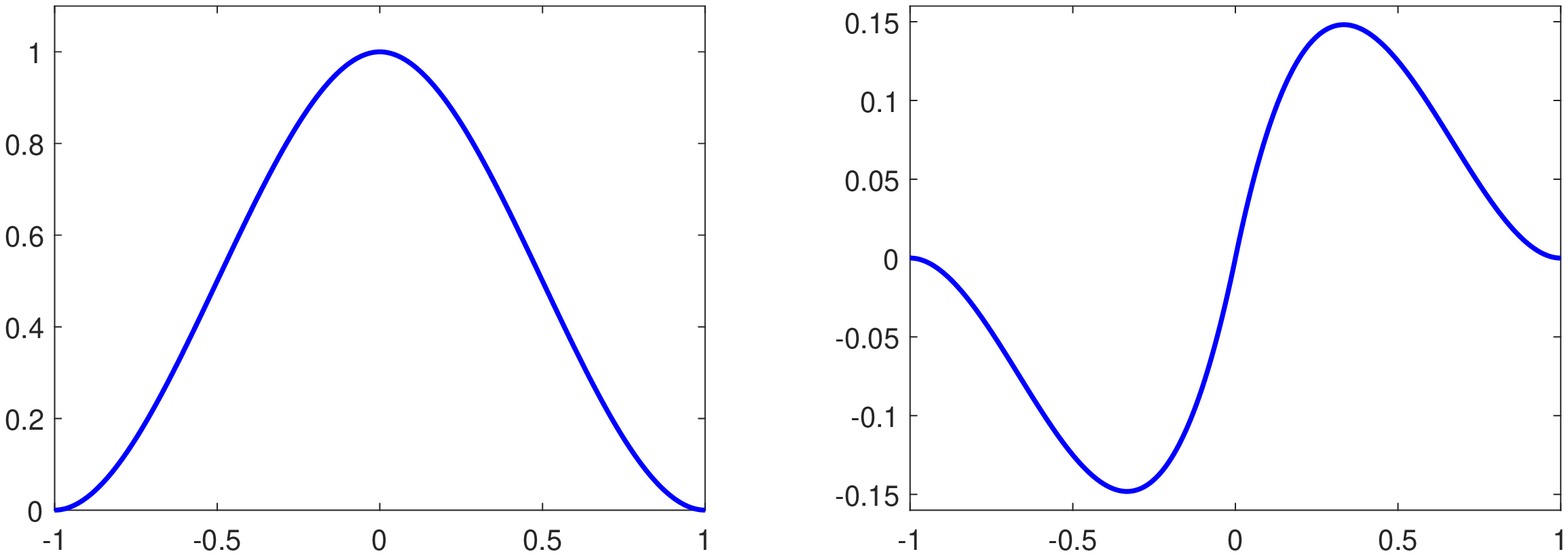} 
\caption{ Hermite cubic spline generators $\xi_1$ (left) and $\xi_2$ (right). }
\label{Fig1}
\end{figure} 

For $j \in \mathbb{N}$ and $k \in \mathbb{Z}$, let $\xi_{j,k}$ be translations and dilations of these two generators, 
\begin{equation} \label{definition_scaling_functions}
\xi_{j,2k+l-1} \left( x \right) = 2^{j/2} \xi_l \left( 2^j x -k \right) \ {\rm for} \quad l=1,2.
\end{equation} 
It is well known that spaces 
\begin{equation} \label{space_tilde_Vj}
\tilde{V}_j = \overline{ {\rm span } \left\{ \xi_{j,k}, k \in \mathbb{Z} \right\}  }
\end{equation}
form a multiresolution analysis, which is not orthogonal; see \cite{Cerna2016,Dahmen2000}. The aim of this section is to construct an orthogonal MRA formed by spaces $V_j$ satisfying (\ref{intertwinning_MRA}). This MRA cannot be generated by only two functions $\xi_1$ and $\xi_2$ as in (\ref{space_tilde_Vj}), but several new generators must be constructed.

As in \cite{Donovan1996}, these generators are constructed to have their supports in  $\left[ -1, 1 \right]$. Because it is required that (\ref{intertwinning_MRA}) be satisfied, these generators must be linear combinations of functions $\xi_{j,k}$ supported in $\left[ -1, 1 \right]$ for a certain level~$j$. For $j \leq 0$, there are not enough functions $\xi_{j,k}$ supported in $\left[ -1, 1 \right]$; therefore we start with the level $j=1$.

First, we limit ourselves to the interval $I_0 = \left[ 0, 1 \right]$. For $j=1$, there are two functions $\xi_{1,2}$ and $\xi_{1,3}$ supported in  $I_0$. Due to the symmetry of $\xi_1$ and the antisymmetry of $\xi_2$, it is easy to verify that functions
\begin{equation}
    \phi_1 = \frac{ \xi_{1,2} }{ \left\| \xi_{1,2} \right\| }, \quad
    \phi_2 = \frac{ \xi_{1,3} }{ \left\| \xi_{1,3} \right\| }
\end{equation}
are $L^2$-orthogonal. The scaling generators $\phi_1$ and $\phi_2$ are displayed in Figure~\ref{Fig2}. 

To construct another two generators with supports in $I_0$, we take functions from $\tilde{V}_2$ with supports in $I_0$, which are functions $\xi_{2,k}$ for $k=2, \ldots, 7$. Let define spaces
\begin{equation}
\mathcal{A}_0 = \text{span} \left\{ \xi_{1,2}, \xi_{1,3} \right\}, \quad
\mathcal{A}_1 = \text{span} \left\{ \xi_{2,k}, k=2, \ldots, 7 \right\}.  
\end{equation}
This means that $\mathcal{A}_0$ is the space generated by all functions $\xi_{j,k}$ on the coarsest level $j=1$ supported in $I_0$ and $\mathcal{A}_1$ is the space generated by all functions $\xi_{j,k}$ on level $j=2$ supported in $I_0$. Next, we find an orthogonal basis of $\mathcal{A}_1 \ominus \mathcal{A}_0$, where $\ominus$ denotes the orthogonal complement. 

Let $\mathbf{C}$ be the matrix with entries
\begin{equation}
    C_{i,j} = \left< \xi_{1,i+1}, \xi_{2,j+1} \right>, \quad i=1,2, \quad j=1, \ldots, 6,
\end{equation}
and $\mathbf{b}^1, \ldots, \mathbf{b}^4$ be vectors forming a basis of the null space of $\mathbf{C}$. Defining functions 
\begin{equation}
    w_i = \sum_{j=1}^6 b^i_{j} \, \xi_{2,j+1}, \quad i = 1, \ldots, 4,
\end{equation}
and orthogonalizing them, we obtain functions
\begin{equation}
 v_1 = \frac{ w_1 }{ \left\| w_1 \right\|}, \quad
    v_i = \frac{ w^i - \sum_{j=1}^{i-1} \left< w_i, v_j \right> v_j }{ \left\| w_i - \sum_{j=1}^{i-1} \left< w_i, v_j \right> v_j  \right\|}, \quad i=2,3,4,   
\end{equation}
which are normalized and form an orthogonal basis for $\mathcal{A}_1 \ominus \mathcal{A}_0$.

The next step of the construction is based on the theory developed in \cite{Donovan1996}. For the convenience of the reader, similar notations for spaces are used as therein. Let
\begin{eqnarray} \label{def_Bh}
\mathcal{B}_0 &=& \text{span} \left( \left\{ \xi_{1,0} \chi_{[0,1]}, \xi_{1,1} \chi_{[0,1]} \right\} \cup \mathcal{A}_0
\right), \\ 
\nonumber \mathcal{B}_1 &=& \text{span} \left( \left\{ \xi_{1,4} \chi_{[0,1]}, \xi_{1,5} \chi_{[0,1]} \right\}  \cup \mathcal{A}_0
\right),    
\end{eqnarray}
where $\chi_{[0,1]}$ is the characteristic function of the interval $\left[0,1 \right]$. This means that $\mathcal{B}_0$ is the space generated by left boundary functions and $\mathcal{B}_1$ is the space generated by right boundary functions. Furthermore, for $h=0,1,$ let 
 \begin{equation} \label{def_Ch}
 \mathcal{C}_h =  \mathcal{B}_h \ominus \mathcal{A}_0. 
 \end{equation}

The following lemma is crucial for the construction. 

\begin{lem} \label{condition_DGH}
With the above settings, let $W$ be a subspace of $\mathcal{A}_1 \ominus \mathcal{A}_0$ such that
\begin{equation} \label{cond_DGH}
 \left(  I - P_W \right) \mathcal{C}_0 \perp  \left(  I - P_W \right) \mathcal{C}_1,
\end{equation}
where $P_W$ is the orthogonal projection, $I$ is identity, and
$\perp$ denotes orthogonality.
If $\phi_3$ and $\phi_4$ form an orthogonal basis for $W$, then $V_j$ generated by $\phi_1$, $\phi_2$, $\phi_3$, $\phi_4$, $\xi_{1,0}$ and $\xi_{1,1}$ form an $L^2$-orthogonal multiresolution analysis satisfying~(\ref{intertwinning_MRA}).
\end{lem}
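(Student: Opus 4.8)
\begin{dukaz}
The strategy is to verify, for the spaces $\left\{ V_j \right\}$ generated by integer translates and dyadic dilations of $\phi_1,\phi_2,\phi_3,\phi_4,\xi_{1,0},\xi_{1,1}$, the five defining properties of an orthogonal MRA together with (\ref{intertwinning_MRA}), following the approach of \cite{Donovan1996}. The argument separates into a ``soft'' part --- nesting, density, trivial intersection, refinability and the intertwining, none of which uses (\ref{cond_DGH}) --- and a ``hard'' part, namely the existence of an \emph{orthonormal} basis of $V_0$ consisting of integer translates of compactly supported generators, which is exactly where (\ref{cond_DGH}) is needed.

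\emph{Soft part.} First I would prove the intertwining $\tilde{V}_1 \subseteq V_0 \subseteq \tilde{V}_2$. The inclusion $V_0 \subseteq \tilde{V}_2$ holds because $\phi_1,\phi_2 \in \mathcal{A}_0 \subseteq \tilde{V}_1 \subseteq \tilde{V}_2$, $\phi_3,\phi_4 \in W \subseteq \mathcal{A}_1 \ominus \mathcal{A}_0 \subseteq \mathcal{A}_1 \subseteq \tilde{V}_2$, $\xi_{1,0},\xi_{1,1} \in \tilde{V}_1 \subseteq \tilde{V}_2$, and $\tilde{V}_2$ is closed and shift invariant. For $\tilde{V}_1 \subseteq V_0$ it suffices to realize every $\xi_{1,k}$ as an integer translate of one of the six generators, since $\xi_{1,4m}=\xi_{1,0}(\cdot-m)$, $\xi_{1,4m+1}=\xi_{1,1}(\cdot-m)$, and $\xi_{1,4m+2},\xi_{1,4m+3}$ are multiples of $\phi_1(\cdot-m),\phi_2(\cdot-m)$. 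Dilating these inclusions gives $\tilde{V}_{j+1}\subseteq V_j\subseteq\tilde{V}_{j+2}$ for all $j$, whence $V_j\subseteq\tilde{V}_{j+2}\subseteq V_{j+1}$ (nesting), the dilation property holds by definition, $\bigcup_j V_j\supseteq\bigcup_j\tilde{V}_{j}$ is dense in $L^2(\mathbb{R})$, and $\bigcap_j V_j\subseteq\bigcap_j\tilde{V}_{j}=\{0\}$; (\ref{intertwinning_MRA}) holds with $q=n=1$.

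\emph{Hard part.} Now I would produce orthonormal scaling generators of $V_0$. After normalizing $\phi_3,\phi_4$, the functions $\phi_1,\phi_2,\phi_3,\phi_4$ are orthonormal: $\phi_1\perp\phi_2$ and $\phi_3\perp\phi_4$ by hypothesis, $\{\phi_1,\phi_2\}$ spans $\mathcal{A}_0$ which is orthogonal to $W\ni\phi_3,\phi_4$, and distinct integer translates have supports overlapping only in a single integer point. The remaining two generators are obtained from $\xi_{1,0},\xi_{1,1}$ by the \cite{Donovan1996}-type orthogonalization: put $\mathcal{I}=\mathcal{A}_0\oplus W$, the interior subspace of the cell $[0,1]$, and let $\beta_i$ ($i=1,2$) be $\xi_{1,i-1}$ with its $L^2$-projection onto $\overline{\text{span}}\{\phi_a(\cdot-m):a,m\}$ removed; since the $\phi_a$ are supported in $[0,1]$, only the translates living in the two cells $[-1,0]$ and $[0,1]$ contribute, so $\beta_i$ is supported in $[-1,1]$ and $\overline{\text{span}}\{\phi_a(\cdot-m),\beta_1(\cdot-m),\beta_2(\cdot-m)\}=V_0$. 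A two-dimensional Gram--Schmidt within one site --- using $\xi_{1,0}\perp\xi_{1,1}$ and the fact that neither projection annihilates $\xi_{1,i-1}$ (its value or first derivative at the integer $0$ is nonzero, while every element of $\mathcal{A}_1$ vanishes there to first order) --- makes $\{\beta_1,\beta_2\}$ orthonormal and genuinely two dimensional. By construction $\beta_i\perp\phi_a(\cdot-m)$ for all $a,m$, and $\beta_i(\cdot-m)$ has disjoint support from $\beta_{i'}$ for $|m|\geq2$. The only remaining orthogonality is $\langle\beta_i,\beta_{i'}(\cdot-1)\rangle=0$, and here (\ref{cond_DGH}) enters: using $P_{\mathcal{I}}=P_{\mathcal{A}_0}+P_W$ and $P_W(I-P_{\mathcal{A}_0})=P_W$ (valid since $W\perp\mathcal{A}_0$), one gets $\beta_i|_{[0,1]}=(I-P_{\mathcal{I}})(\xi_{1,i-1}\chi_{[0,1]})=(I-P_W)(I-P_{\mathcal{A}_0})(\xi_{1,i-1}\chi_{[0,1]})\in(I-P_W)\mathcal{C}_0$ because $\xi_{1,i-1}\chi_{[0,1]}\in\mathcal{B}_0$, while the shifted restriction $\beta_{i'}(\cdot-1)|_{[0,1]}=(I-P_{\mathcal{I}})(\xi_{1,i'+3}\chi_{[0,1]})$ with $\xi_{1,i'+3}\chi_{[0,1]}=\xi_{1,i'-1}(\cdot-1)\chi_{[0,1]}\in\mathcal{B}_1$ lies in $(I-P_W)\mathcal{C}_1$; since the supports of $\beta_i$ and $\beta_{i'}(\cdot-1)$ overlap only on $[0,1]$, the inner product vanishes by (\ref{cond_DGH}). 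Hence $\{\phi_1,\phi_2,\phi_3,\phi_4,\beta_1,\beta_2\}$ generate $V_0$ with orthonormal integer translates, which is therefore an orthonormal (a fortiori Riesz) basis of $V_0$, so $\left\{V_j\right\}$ is an orthogonal MRA satisfying (\ref{intertwinning_MRA}).

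\emph{Main obstacle.} I expect the delicate point to be the last one in the hard part: identifying $\beta_i|_{[0,1]}$ with an element of $(I-P_W)\mathcal{C}_0$ and the shifted restriction $\beta_{i'}(\cdot-1)|_{[0,1]}$ with an element of $(I-P_W)\mathcal{C}_1$. This is the bookkeeping that converts the abstract hypothesis (\ref{cond_DGH}) into the concrete orthogonality of the transition functions' integer translates, and it requires carefully tracking which part of each function lives in which cell and then handling the algebra of the mutually orthogonal projections $P_{\mathcal{A}_0}$ and $P_W$. Everything else --- the soft MRA axioms, which are immediate from the intertwining, and the orthonormality of the interior generators $\phi_1,\dots,\phi_4$, which follows from the symmetry of $\xi_1$, the antisymmetry of $\xi_2$, and the definition of $W$ --- is routine.
\end{dukaz}
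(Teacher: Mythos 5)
Your proposal is correct, but it takes a genuinely different route from the paper: the paper dispatches this lemma in one line, as a direct consequence of the more general Lemma 3.1 of \cite{Donovan1996}, whereas you reconstruct the argument from scratch in this concrete setting. Your soft part (nesting, dilation, density, trivial intersection, and the intertwining $\tilde V_1\subseteq V_0\subseteq\tilde V_2$, i.e.\ (\ref{intertwinning_MRA}) with $q=n=1$) is routine bookkeeping that the cited lemma also presupposes; the substance is your hard part, where you project $\xi_{1,0},\xi_{1,1}$ onto the orthogonal complement of the translates of $\phi_1,\dots,\phi_4$, isolate the only nontrivial orthogonality $\left\langle\beta_i,\beta_{i'}\left(\cdot-1\right)\right\rangle$, and convert it into the hypothesis (\ref{cond_DGH}) by identifying $\beta_i|_{[0,1]}=(I-P_W)(I-P_{\mathcal{A}_0})\left(\xi_{1,i-1}\chi_{[0,1]}\right)\in(I-P_W)\mathcal{C}_0$ and $\beta_{i'}\left(\cdot-1\right)|_{[0,1]}\in(I-P_W)\mathcal{C}_1$, using $P_WP_{\mathcal{A}_0}=0$ and the fact that the two supports overlap only in $[0,1]$; this is exactly the mechanism behind the Donovan--Geronimo--Hardin lemma and mirrors what the paper does afterwards when it builds $\phi_5,\phi_6$ by the same orthogonalization. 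The paper's citation buys brevity and generality; your version buys a self-contained verification that also makes explicit that ``orthogonal MRA'' here means existence of orthonormal generators of $V_0$ (the listed $\xi_{1,0},\xi_{1,1}$ are not themselves orthogonal) and pins down the constants in (\ref{intertwinning_MRA}). Two small points to tighten, neither a gap: justify that $\beta_1,\beta_2$ are nonzero and independent by noting that the restriction to $(0,1)$ of the closed span of the $\phi_a\left(\cdot-m\right)$ lies in the finite-dimensional space $\mathcal{A}_0\oplus W$, whose elements vanish together with their first derivative at $0$, while $c_1\xi_{1,0}+c_2\xi_{1,1}$ has value $\sqrt{2}\,c_1$ and derivative $2\sqrt{2}\,c_2$ there; and either establish the cross-site orthogonality for the raw $\beta_i$ first and observe it is inherited by within-site linear combinations, or remark that $(I-P_W)\mathcal{C}_0$ is a subspace, so the Gram--Schmidt-modified $\beta_2|_{[0,1]}$ still lies in it.
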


\begin{proof}
Lemma \ref{condition_DGH} is a consequence of a more general Lemma 3.1 from~\cite{Donovan1996}.
\end{proof}

It follows from (\ref{def_Bh}) and (\ref{def_Ch}) that the space $\mathcal{C}_0$ is generated by functions
\begin{equation}
   c_i = \xi_{1,i-1} \chi_{[0,1]} - \left< \xi_{1,i-1}, \phi_1 \right> \phi_1 -
    \left< \xi_{1, i-1}, \phi_2 \right> \phi_2, \quad i=1,2. 
\end{equation}
Similarly, the space $\mathcal{C}_1$ is generated by functions
\begin{equation}
 c_i = \xi_{1,i+1} \chi_{[0,1]} - \left< \xi_{1,i+1}, \phi_1 \right> \phi_1 -
  \left< \xi_{1,i+1}, \phi_2 \right> \phi_2, \quad i=3,4.    
\end{equation}
Because $W \subset \mathcal{A}_1 \ominus \mathcal{A}_0$ and 
$v_1, v_2, v_3,$ and $v_4$ form a basis for $\mathcal{A}_1 \ominus \mathcal{A}_0$, basis functions $\phi_3$ and $\phi_4$ for $W$ can be expressed as 
\begin{equation}
    \phi_3 = \sum_{j=1}^{4} s_{1,j} v_j, \quad \phi_4 = \sum_{j=1}^{4} s_{2,j} v_j,
\end{equation}
where the coefficients $s_{i,j}$ are such that condition (\ref{cond_DGH}) is satisfied.
This condition can be rewritten as
\begin{equation}
0 = \left< c_i - \sum_{m=1}^2 \sum_{j=1}^{4} s_{m,j} \left< c_i, v_j \right> v_j,
c_k - \sum_{n=1}^2 \sum_{l=1}^{4} s_{n,l} \left< c_k, v_l \right> v_l \right>,
\end{equation}
for $i=1,2$, and $k=3,4.$ Due to the orthogonality of functions $v_j$, we have
\begin{equation} \label{kvadr_cond}
    \left< c_i, c_k \right> = \sum_{m=1}^2 \sum_{j=1}^{4} s_{i,j} s_{k,j} \left< c_i, v_j \right>
    \left< c_k, v_j \right>.
\end{equation}
Furthermore, we require $\phi_3$ and $\phi_4$ to be orthogonal and normalized, which yields additional conditions
\begin{equation} \label{ortog_cond}
    \sum_{j=1}^4 s_{i,j}^2 = 1, \quad i=1,2, \quad \sum_{j=1}^4 s_{1,j} s_{2,j} = 0.
\end{equation}
Hence, (\ref{kvadr_cond}) and (\ref{ortog_cond}) form a system of $7$ quadratic equations with $8$ unknowns, which cannot be solved explicitly. Therefore, we solve it numerically by the Levenberg--Marquardt algorithm \cite{Marquardt1963}. According to Lemma~\ref{condition_DGH}, the resulting functions $\phi_3$ and $\phi_4$ are other two scaling generators orthogonal to $\phi_1$ and $\phi_2$. All the scaling generators supported in $I_0$ are displayed in Figure~\ref{Fig2}. 

\begin{figure}[ht]
\caption{\emph{ Orthogonal cubic spline generators $\phi_1$ (top left), $\phi_2$ (top right), $\phi_3$ (bottom left), and $\phi_4$ (bottom right) supported in $\left[ 0, 1 \right]$. }}
\includegraphics[width=13.0cm]{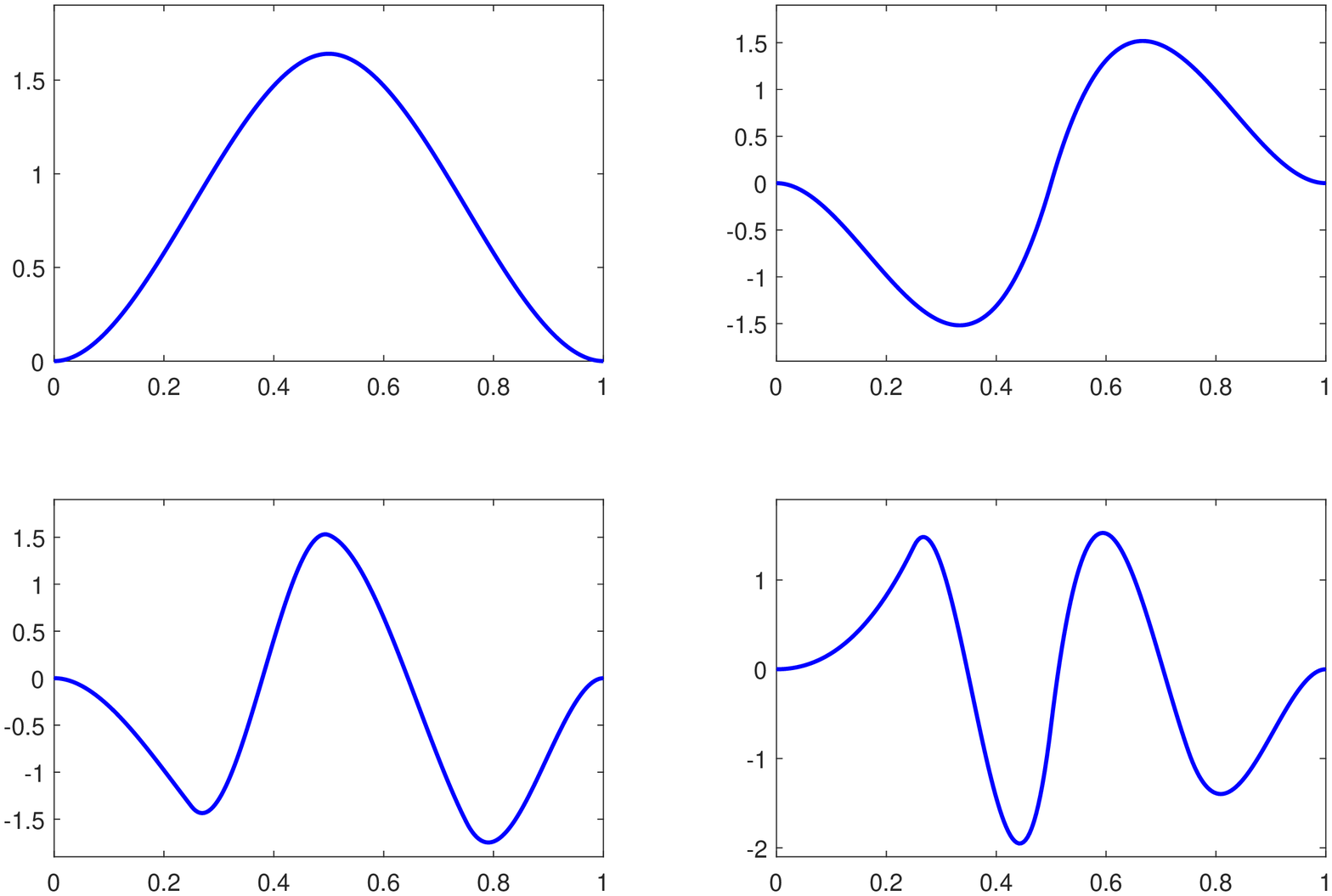} 
\label{Fig2}
\end{figure} 

Lemma~\ref{condition_DGH} states that functions $\phi_1$, $\phi_2$, $\phi_3$, $\phi_4$, $\xi_{1,0}$ and $\xi_{1,1}$ generate spaces $V_j$, which form an orthogonal multiresolution analysis satisfying (\ref{intertwinning_MRA}). However, only functions $\phi_1$, $\phi_2$, $\phi_3$, and $\phi_4$ are orthogonal. Therefore, we perform orthogonalization and normalization to obtain functions generating an orthogonal basis.
Hence, the two remaining generators $\phi_5$ and $\phi_6$ are given by
 \begin{equation}
     g_5 = \xi_{1,0} - \sum_{j=1}^4 \left< \xi_{1,0}, \phi_j \right> \phi_j
     - \sum_{j=1}^4 \left< \xi_{1,0} , \phi_j \left( \cdot + 1 \right) \right> \phi_j \left( \cdot + 1 \right), \quad \phi_5 = \frac{ g_5 }{ \left\|  g_5 \right\| },
 \end{equation}
and 
\begin{equation}
    g_6 = \xi_{1,1}  - \sum_{j=1}^5 \left< \xi_{1,1}, \phi_j \right> \phi_j
    - \sum_{j=1}^5 \left< \xi_{1,1}, \phi_j \left( \cdot + 1 \right) \right> \phi_j \left( \cdot + 1 \right), \quad \phi_6 = \frac{ g_6 }{ \left\| g_6 \right\|}.
\end{equation}
Generators $\phi_5$ and $\phi_6$ supported in $\left[ -1, 1 \right]$
are displayed in Figure~\ref{Fig3}. 

\begin{figure}[ht]
\caption{\emph{ Orthogonal cubic spline generators $\phi_5$ (left) and $\phi_6$ (right) with support $\left[ -1, 1 \right]$.  }}
\includegraphics[width=13.0cm]{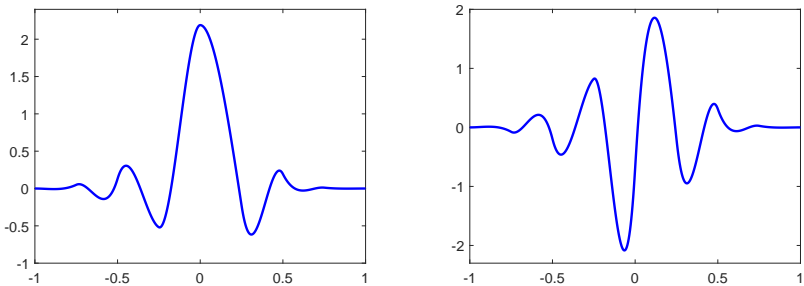} 
\label{Fig3}
\end{figure} 

Using translations and dilations of the generators, we obtain scaling functions 
\begin{eqnarray}  \label{definition_phi_jk}
\phi_{j,6k+l} \left(x\right) &=& 2^{j/2} \phi_l \left( 2^j x - k \right), \ \quad l=1, 2, 3, 4, \quad k \in \mathbb{Z}, \\
\nonumber \phi_{j,6k+l} \left(x\right) &=& 2^{j/2} \phi_l \left( 2^j x - k -1 \right), \ \quad l=5, 6, \quad k \in \mathbb{Z},
\end{eqnarray}
and multiresolution spaces
\begin{equation}
 V_j = \overline{ {\rm span} \left\{ \phi_{j,k}, k \in \mathbb{Z} \right\} }.
\end{equation}

The properties of scaling functions and spaces $V_j$ are summarized in the following  theorem.

\begin{thm} \label{thm_properties_scaling}
Let $\phi_i$, $\phi_{j,k},$ and $V_j$ be defined as above.
\begin{itemize}
    \item[a)] Generators $\phi_i$, $i=1, \ldots, 6$, and scaling functions
    $\phi_{j,k}$, $j, k \in \mathbb{Z}$, are cubic splines belonging to 
    $C^1 \left( \mathbb{R} \right)$.
    \item[b)] The sequence $\left\{ V_j \right\}_{ j \in \mathbb{Z} }$ is an orthogonal multiresolution analysis satisfying~(\ref{intertwinning_MRA}). 
\end{itemize}
\end{thm}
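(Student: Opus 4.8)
The plan is to prove a) by an elementary ``finite linear combination'' observation and b) by reducing to Lemma~\ref{condition_DGH} after identifying the spaces $V_j$ generated by $\phi_1,\dots,\phi_6$ with those generated by $\phi_1,\phi_2,\phi_3,\phi_4,\xi_{1,0},\xi_{1,1}$.

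\textbf{Part a).} A direct check of the one-sided values and first derivatives of $\xi_1,\xi_2$ at the breakpoints $-1,0,1$ shows that $\xi_1,\xi_2$ are piecewise cubic and lie in $C^1(\mathbb{R})$; hence every dilate--translate $\xi_{j,k}$ is a $C^1$ cubic spline, and so is every \emph{finite} linear combination of such functions (its knot set being the union of the individual ones). Now $\phi_1,\phi_2$ are normalizations of $\xi_{1,2},\xi_{1,3}$; the functions $\phi_3,\phi_4$ lie in $\mathcal{A}_1\ominus\mathcal{A}_0\subset\mathrm{span}\{\xi_{2,2},\dots,\xi_{2,7}\}$; and once $\phi_1,\dots,\phi_4$, and then $\phi_5$, are seen to be $C^1$ cubic splines, the formulas defining $g_5$ and $g_6$ exhibit them as finite linear combinations of $\xi_{1,0}$ or $\xi_{1,1}$ and of integer translates of $\phi_1,\dots,\phi_5$. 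Hence all six generators $\phi_i$ are $C^1$ cubic splines, and since dilation $x\mapsto2^jx$ and translation preserve both properties, so is every $\phi_{j,k}$ in~(\ref{definition_phi_jk}).

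\textbf{Part b).} For the multiresolution axioms and the intertwining~(\ref{intertwinning_MRA}), the idea is to show that $V_0=\overline{\mathrm{span}}\{\phi_{0,k}:k\in\mathbb{Z}\}$ coincides with the space $\hat V_0$ generated by the integer translates of $\phi_1,\phi_2,\phi_3,\phi_4,\xi_{1,0},\xi_{1,1}$ appearing in Lemma~\ref{condition_DGH}. Both generating families are closed under integer translation --- for $\{\phi_{0,k}\}$ directly from~(\ref{definition_phi_jk}) --- so it suffices to show each generator of one family lies in the closed span of the translates of the other. Since $\phi_5=g_5/\|g_5\|$ and $\phi_6=g_6/\|g_6\|$ are well defined, $\|g_5\|,\|g_6\|\neq0$, and the relations defining $g_5,g_6$ express $\phi_5,\phi_6$ as finite linear combinations of integer translates of $\xi_{1,0},\xi_{1,1},\phi_1,\dots,\phi_4$, giving $V_0\subseteq\hat V_0$; conversely, solved for $\xi_{1,0}$ and $\xi_{1,1}$, the same relations express those as finite linear combinations of integer translates of $\phi_1,\dots,\phi_6$, giving $\hat V_0\subseteq V_0$. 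Dilating by $2^j$ then gives $V_j=\hat V_j$ for all $j\in\mathbb{Z}$, and Lemma~\ref{condition_DGH}, applied with $W=\mathrm{span}\{\phi_3,\phi_4\}$ (which satisfies~(\ref{cond_DGH}) by construction), yields that $\{V_j\}_{j\in\mathbb{Z}}$ is an $L^2$-orthogonal multiresolution analysis satisfying~(\ref{intertwinning_MRA}).

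To additionally verify that the explicit set $\{\phi_{0,k}\}$ is itself orthonormal (hence a legitimate choice of $\Phi_0$), one would combine: the mutual orthonormality of $\phi_1,\dots,\phi_4$ (from $\phi_1\perp\phi_2$, from $\phi_1,\phi_2\in\mathcal{A}_0$ being orthogonal to $\phi_3,\phi_4\in\mathcal{A}_1\ominus\mathcal{A}_0$, and from the Gram--Schmidt step producing $\phi_3,\phi_4$); the disjointness up to null sets of the supports $[k,k+1]$ of the translates $\phi_l(\cdot-k)$, $l\le4$; the Gram--Schmidt step producing $g_5,g_6$, which makes $\phi_5,\phi_6$ orthogonal to $\phi_1,\dots,\phi_4$ and to their shifts $\phi_l(\cdot+1)$ --- the only translates of $\phi_1,\dots,\phi_4$ overlapping $\mathrm{supp}\,\phi_5=\mathrm{supp}\,\phi_6=[-1,1]$ in positive measure; and finally condition~(\ref{cond_DGH}), which is exactly what forces the remaining ``boundary'' inner products $\langle\phi_5,\phi_5(\cdot\pm1)\rangle$, $\langle\phi_5,\phi_6(\cdot\pm1)\rangle$, $\langle\phi_6,\phi_6(\cdot\pm1)\rangle$ to vanish. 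I expect this last matching --- translating the abstract relation~(\ref{cond_DGH}) between $\mathcal{C}_0$ and $\mathcal{C}_1$ into the concrete vanishing of these autocorrelations --- to be the main obstacle; the space identification above, though routine, also needs careful bookkeeping of the integer translates.
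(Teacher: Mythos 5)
Your proof is correct and follows essentially the same route as the paper: part a) is the same linear-combination-of-$C^1$-cubic-splines argument, and part b) reduces to Lemma~\ref{condition_DGH} exactly as the paper does, with you merely making explicit the span identification between the $V_j$ generated by $\phi_1,\ldots,\phi_6$ and those generated by $\phi_1,\ldots,\phi_4,\xi_{1,0},\xi_{1,1}$, which the paper subsumes under ``the above construction.'' The additional orthonormality check of $\left\{ \phi_{0,k} \right\}$ that you flag as the main obstacle is not actually required for the statement, since Lemma~\ref{condition_DGH} already delivers the orthogonal-MRA property once the spans coincide; your sketched mechanism via condition~(\ref{cond_DGH}) for the boundary autocorrelations is nevertheless the correct one.
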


\begin{proof}
\begin{itemize}
    \item[a)] Because all the generators $\phi_i$, $i=1, \ldots, 6,$ are constructed as linear combinations of cubic splines from $C^1 \left( \mathbb{R} \right)$, they are also cubic splines from $C^1 \left( \mathbb{R} \right)$. Definition (\ref{definition_phi_jk}) implies that $\phi_{j,k}$, $j, k \in \mathbb{Z}$ are also cubic splines and belong to $C^1 \left( \mathbb{R} \right)$.
    \item[b)] The fact that the sequence $\left\{ V_j \right\}_{ j \in \mathbb{Z} }$ is an orthogonal multiresolution analysis satisfying (\ref{intertwinning_MRA}) follows from 
    Lemma~\ref{condition_DGH} and the above construction.
\end{itemize}
\end{proof}

As already mentioned, the system of equations (\ref{kvadr_cond}) and (\ref{ortog_cond}) has more than one solution. Moreover, instead of $\phi_1$ and $\phi_2$, we can take their orthogonalized linear combinations and, similarly, instead of $w_i$, we can take their linear combinations and orthogonalize them to obtain different functions $v_i$. Therefore, there are infinitely many choices of generators $\phi_i$, $i=1,\ldots,6.$ We performed extensive numerical experiments to find various solutions. In Appendix~A, we present a variant of generators that leads to a small condition number of the resulting normalized wavelet basis with respect to the $H^1$-seminorm because it also plays a significant role in applications; see \cite{Cohen2003,Cerna2016}.

Furthermore, the part of the construction is the Gram--Schmidt orthogonalization, which can lead to inaccurate results due to round-off errors. Therefore, we computed all the coefficients using quadruple precision and then rounded the results to 16 digits. Computing the scalar products of the resulting functions, we verified numerically that the presented coefficients are sufficiently accurate. For the computation, Matlab and Multiprecision Computing Toolbox Advanpix were used.

\subsection{ Construction of orthogonal cubic spline wavelets }

Wavelet generators corresponding to the multiresolution analysis $\left\{ V_j \right\}_{j \in \mathbb{Z} }$ are constructed as generators of the orthogonal basis of the space $V_1 \ominus V_0$.

The first step is the construction of generators with supports in $I_0$. These generators must be linear combinations of the basis functions of $V_1$ with supports in $I_0$, which are the functions $p_k = \phi_k \left( 2 \cdot \right)$ for $k=1, \ldots ,4$, and  $p_{k+4} = \phi_k \left( 2 \cdot -1  \right)$ for $k=1, \ldots, 6$. The generators must be orthogonal to all functions from $V_0$ with the part or whole support in $I_0$, i.e., to functions $q_i = \phi_i$, $i=1, \ldots, 6,$ and $q_{i+2} = \phi_i \left( \cdot -1 \right)$ for $i=5,6$. Therefore, the coefficients of the linear combinations can be computed as the solution of the homogeneous system of linear equations with the matrix $\mathbf{S}$ with entries $S_{i,j}=\left< p_i, q_j \right>.$ Because functions $p_i$, as well as functions $q_j$, are linearly independent, the null space of the matrix $\mathbf{S}$ is two dimensional. Let vectors $\mathbf{b}^1$ and $\mathbf{b}^2$ form its basis. Then 
\begin{equation}
    u_i = \sum_{j=1}^8 b^i_j \, q_j, \quad i=1,2,
\end{equation}
are functions from $V_1$ with supports in $I_0$. Using the Gram--Schmidt orthogonalization and $L^2$-normalization of $u_1$ and $u_2$, we obtain generators 
\begin{equation}
\psi_1 = \frac{ u_1 }{ \left\| u_1 \right\|}, \quad
\psi_2 = \frac{ u_2 - \left< u_2, \psi_1 \right> \psi_1  }{ \left\| u_2 - \left< u_2, \psi_1 \right> \psi_1 \right\| }.     
\end{equation}
Graphs of functions $\psi_1$ and $\psi_2$ are displayed in Figure~\ref{Fig4}.

\begin{figure}[ht]
\caption{\emph{ Orthogonal wavelet generators $\psi_1$ and $\psi_2$ with support $\left[ 0, 1 \right]$.  }}
\includegraphics[width=13.0cm]{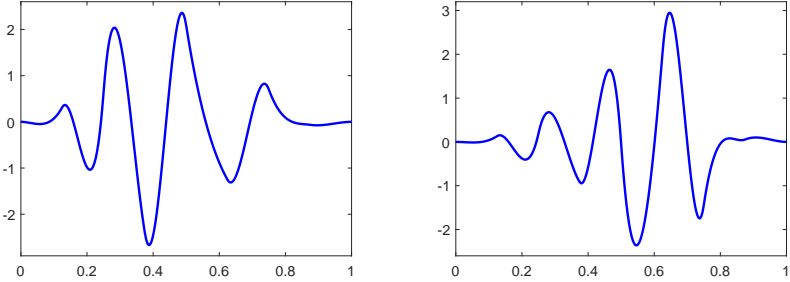} 
\label{Fig4}
\end{figure} 

Now, we construct generators with supports $\left[-1,1\right]$. First, consider functions from $V_1$, which have parts of the support both in $\left( - \infty, 0 \right)$ and $\left( 0, \infty \right)$, i.e., functions $\phi_{1,-1}= 2^{1/2} \phi_5 \left( 2 \cdot \right)$ and $\phi_{1,0} = 2^{1/2} \phi_6 \left( 2 \cdot \right)$. Let us denote
\begin{equation}
    \mathcal{D}_j = \overline{ {\rm span} \left( \left\{ \phi_{j,k}, k \in \mathbb{Z}  \right\} \backslash \left\{ \phi_5 \left( 2^j \cdot \right), \phi_6  \left( 2^j \cdot \right) \right\} \right) }.
\end{equation}
The functions $\phi_5 \left( 2 \cdot \right)$ and $\phi_6 \left( 2 \cdot \right)$ are already orthogonal to the functions from $\mathcal{D}_1$ because $\phi_{2,k}$, $k\in \mathbb{Z}$ are orthogonal. Because $\mathcal{D}_0 \subset \mathcal{D}_1$,  these functions are also orthogonal to the functions from $\mathcal{D}_0$. That is, they are orthogonal to all scaling functions $\phi_{0,k}$, except $\phi_{0,-1} = \phi_5$ and $\phi_{0,0} = \phi_6$. Therefore, functions 
\begin{equation}
    h_{i} = \phi_{i+4} \left( 2 \cdot \right) - 
    \left< \phi_{i+4} \left( 2 \cdot \right), \phi_5 \right> \phi_5 -
    \left< \phi_{i+4} \left( 2 \cdot \right), \phi_6 \right> \phi_6, \quad i=1,2,
\end{equation}
are orthogonal to all scaling functions $\phi_{0,k}$, $k \in \mathbb{Z}.$

Now, orthogonalization and normalization are again performed to obtain wavelet generators
\begin{equation}
\psi_3 = \frac{ h_1 }{ \left\| h_1 \right\|}, \quad
\psi_4 = \frac{  h_2 - \left< h_2, \psi_3 \right> \psi_3  }{ \left\| h_2 - \left< h_2, \psi_3 \right> \psi_3 \right\| }.   
\end{equation}

The remaining two wavelet generators with the support in $\left[ -1, 1 \right]$ are defined as linear combinations of functions from $V_1$ with the whole support in $\left[ -1, 1 \right]$, 
\begin{equation} \label{def_psi_5_6}
    \psi_{i+4} = \sum_{k=-11}^{10} c_{i,k} \phi_{1,k}, \quad i=1,2.
\end{equation}
We need to find coefficients $c_{i,k}$ such that $\psi_5$ and $\psi_6$ are orthogonal to functions from $V_0$, which have part of the support or the whole support in $I_1$, i.e., to functions $\phi_{0,k}$ for $k=-7, \ldots, 6$. Furthermore, these generators must be orthogonal to already constructed wavelets $\psi_i$, $i=1, \ldots, 4,$ and their translations $\psi_i \left( \cdot + 1 \right)$ and $\psi_i \left( \cdot - 1 \right)$ for $i=3, 4$. 

According to the following lemma, we can omit some of these conditions.

\begin{lem}
If functions $\psi_5$ and $\psi_6$ are given by (\ref{def_psi_5_6}) and
$\left\langle \psi_i, \phi_{0,k} \right\rangle$ for $i=5,6$ and $k=-7, \ldots, 6$, then for $i=5,6,$ and $l=3,4$, we have
\begin{equation}
    \left\langle \psi_i, \psi_l \left( \cdot - 1 \right) \right\rangle =0,
    \quad \left\langle \psi_i, \psi_l \left( \cdot + 1 \right) \right\rangle =0.
\end{equation}
\end{lem}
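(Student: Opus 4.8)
\noindent The plan is to expand both $\psi_5,\psi_6$ and the shifted generators $\psi_l(\cdot\pm 1)$, $l=3,4$, in terms of scaling functions at levels $0$ and $1$, and then to check that every inner product that arises vanishes: the ones against level-$0$ scaling functions because of the assumed relations $\left\langle\psi_i,\phi_{0,k}\right\rangle=0$ for $k=-7,\dots,6$, and the ones against level-$1$ scaling functions because $\left\{\phi_{1,k}\right\}_{k\in\mathbb{Z}}$ is orthonormal (the MRA $\left\{V_j\right\}$ being orthogonal) while the expansion (\ref{def_psi_5_6}) only uses indices $k=-11,\dots,10$.

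First I would read off, directly from the construction of $\psi_3$ and $\psi_4$, that
\[
\psi_3,\ \psi_4\ \in\ \text{span}\left\{\phi_5\left(2\cdot\right),\ \phi_6\left(2\cdot\right),\ \phi_5,\ \phi_6\right\},
\]
since $h_1\in\text{span}\left\{\phi_5\left(2\cdot\right),\phi_5,\phi_6\right\}$ and $h_2\in\text{span}\left\{\phi_6\left(2\cdot\right),\phi_5,\phi_6\right\}$ by their defining formulas, and the Gram--Schmidt step only forms linear combinations. Shifting by $\mp 1$ then gives, for $l=3,4$,
\[
\psi_l\left(\cdot-1\right)\in\text{span}\left\{\phi_5\left(2\cdot-2\right),\phi_6\left(2\cdot-2\right),\phi_5\left(\cdot-1\right),\phi_6\left(\cdot-1\right)\right\},
\]
\[
\psi_l\left(\cdot+1\right)\in\text{span}\left\{\phi_5\left(2\cdot+2\right),\phi_6\left(2\cdot+2\right),\phi_5\left(\cdot+1\right),\phi_6\left(\cdot+1\right)\right\}.
\]

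Next I would identify each of these eight functions with a scaling function via (\ref{definition_phi_jk}): up to the normalizing factor $2^{1/2}$, the functions $\phi_5\left(2\cdot-2\right),\phi_6\left(2\cdot-2\right)$ are $\phi_{1,11},\phi_{1,12}$ and $\phi_5\left(2\cdot+2\right),\phi_6\left(2\cdot+2\right)$ are $\phi_{1,-13},\phi_{1,-12}$; likewise $\phi_5\left(\cdot-1\right),\phi_6\left(\cdot-1\right)$ are $\phi_{0,5},\phi_{0,6}$ and $\phi_5\left(\cdot+1\right),\phi_6\left(\cdot+1\right)$ are $\phi_{0,-7},\phi_{0,-6}$. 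Because $\left\{\phi_{1,k}\right\}_{k\in\mathbb{Z}}$ is orthonormal and $\psi_i=\sum_{k=-11}^{10}c_{i,k}\phi_{1,k}$ for $i=5,6$, and since $11,12,-12,-13\notin\left\{-11,\dots,10\right\}$, we get $\left\langle\psi_i,\phi_{1,11}\right\rangle=\left\langle\psi_i,\phi_{1,12}\right\rangle=\left\langle\psi_i,\phi_{1,-12}\right\rangle=\left\langle\psi_i,\phi_{1,-13}\right\rangle=0$. Since $5,6,-7,-6\in\left\{-7,\dots,6\right\}$, the hypothesis gives $\left\langle\psi_i,\phi_{0,5}\right\rangle=\left\langle\psi_i,\phi_{0,6}\right\rangle=\left\langle\psi_i,\phi_{0,-7}\right\rangle=\left\langle\psi_i,\phi_{0,-6}\right\rangle=0$. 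Linearity of the inner product in the second argument then yields $\left\langle\psi_i,\psi_l\left(\cdot-1\right)\right\rangle=\left\langle\psi_i,\psi_l\left(\cdot+1\right)\right\rangle=0$ for $i=5,6$ and $l=3,4$.

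The only delicate part is the index bookkeeping in the middle step: one must carefully push each dilated and shifted generator through the indexing convention (\ref{definition_phi_jk}) and verify that the four relevant level-$1$ indices lie outside the range $[-11,10]$ used in (\ref{def_psi_5_6}), while the four relevant level-$0$ indices lie inside $[-7,6]$. Once that matching is pinned down, everything else follows from the defining formulas of $\psi_3,\psi_4$, the orthonormality of the scaling functions at each level, and the stated assumption.
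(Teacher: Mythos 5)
Your proof is correct and follows essentially the same route as the paper: you expand $\psi_l(\cdot\pm 1)$, $l=3,4$, in $\phi_5(2\cdot\mp 2)$, $\phi_6(2\cdot\mp 2)$, $\phi_5(\cdot\mp 1)$, $\phi_6(\cdot\mp 1)$, kill the level-$0$ terms by the hypothesis (indices $5,6,-7,-6\in\{-7,\dots,6\}$) and the level-$1$ terms by orthonormality of $\{\phi_{1,k}\}$ since the indices $11,12,-12,-13$ lie outside the range $-11,\dots,10$ used in (\ref{def_psi_5_6}), exactly as in relations (\ref{psil_expansion})--(\ref{ortogonality_psi_phi_2}) of the paper. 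The only difference is cosmetic: you carry out the index bookkeeping for the $+1$ shift explicitly, where the paper merely remarks that this case is similar.
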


\begin{proof}
According to definitions of generators $\psi_l$, $l=3,4$, we can write \begin{equation} \label{psil_expansion}
\psi_l \left( \cdot - 1 \right) = d_{l,1} \phi_5 \left( 2 \cdot - 2 \right) + d_{l,2} \phi_6 \left( 2 \cdot - 2 \right) + d_{l,3} \phi_5 \left( \cdot - 1 \right) +
d_{l,4} \phi_6 \left( \cdot - 1 \right),
\end{equation}
where $d_{l,i}$ are real coefficients. Assumptions of the lemma imply that
\begin{equation} \label{ortogonality_psi_phi_1}
\left\langle \psi_i,\phi_m \left( \cdot - 1 \right) \right\rangle = 0,
\quad i,m=5,6.
\end{equation}
Due to (\ref{def_psi_5_6}) and orthogonality of functions $\phi_{1,k}$, 
we obtain 
\begin{equation} \label{ortogonality_psi_phi_2}
\left\langle \psi_i, \phi_m \left( 2 \cdot - 2 \right) \right\rangle = \frac{1}{\sqrt{2}} \sum_{k=-11}^{10} c_{i,k} \left\langle \phi_{1,k}, \phi_{1,m+6}  \right\rangle= 0, \quad i,m=5,6.
\end{equation}
Relations (\ref{psil_expansion}), (\ref{ortogonality_psi_phi_1}) and (\ref{ortogonality_psi_phi_2}) imply $\left\langle \psi_i, \psi_l \left( \cdot - 1 \right) \right\rangle =0$ for $i=5,6,$ and $l=3,4$. The proof is similar for functions $\psi_l \left( \cdot + 1 \right)$, $l=3,4$.
\end{proof}

Hence, let $\mathbf{T}$ be a matrix with entries
\begin{eqnarray}
T_{i,j} &=& \left\langle \phi_{0,i-8}, \phi_{1,j-12}  \right\rangle,
i = 1, \ldots, 14, \, j = 1, \ldots, 22, \\
\nonumber T_{i,j} &=& \left\langle \psi_{i-14} \left( \cdot + 1 \right), \phi_{1,j-12}  \right\rangle, i = 15, 16, \, j = 1, \ldots, 22, \\
\nonumber T_{i,j} &=& \left\langle \psi_{i-16}, \phi_{1,j-12}  \right\rangle,
i = 17, \ldots, 20, \, j = 1, \ldots, 22.
\end{eqnarray}
Denote $\mathbf{b}^1$ and $\mathbf{b}^2$ vectors forming a basis of the null space of $\mathbf{T}$ and define
\begin{equation}
z_i = \sum_{k=-11}^{10} b^i_k \, \phi_{1,k}, \quad i=1,2 .  
\end{equation}
Then, we can set
\begin{equation}
\psi_5 = \frac{ z_1 }{ \left\| z_1 \right\|}, \quad
\psi_6 = \frac{ z_2 - \left< z_2, \psi_5 \right> \psi_5  }{ \left\| z_2 - \left< z_2, \psi_5 \right> \psi_5 \right\| }.    
\end{equation}

Graphs of all wavelet generators with supports $\left[ -1, 1 \right]$ are shown in Figure~\ref{Fig5}, and polynomial coefficients of all wavelet generators are given in Appendix~A.

\begin{figure}[ht]
\caption{\emph{ Orthogonal wavelet generators $\psi_3$ (top left), $\psi_4$ (top right), $\psi_5$ (bottom left), and $\psi_6$ (bottom right) with supports $\left[ -1, 1 \right]$.  }}
\includegraphics[width=13.0cm]{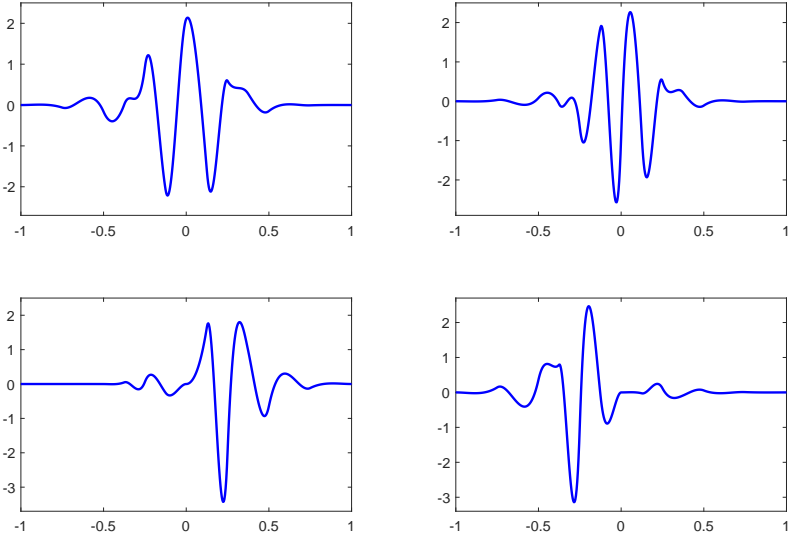} 
\label{Fig5}
\end{figure} 

We obtain wavelets $\psi_{j,k}$ using translations and dilations of the constructed generators, 
\begin{eqnarray}  \label{definition_psi_jk}
\psi_{j,6k+l} \left(x\right) &=& 2^{j/2} \psi_l \left( 2^j x - k \right), \ \quad l=1, 2, \quad j, k \in \mathbb{Z}, \\
\nonumber \psi_{j,6k+l} \left(x\right) &=& 2^{j/2} \psi_l \left( 2^j x - k - 1 \right), \ \quad l=3, 4, 5, 6, \quad j, k \in \mathbb{Z}.
\end{eqnarray}
These wavelets form wavelet spaces 
\begin{equation}
W_j = \overline{ {\rm span} \left\{ \psi_{j,k}, k \in \mathbb{Z} \right\} }, \quad j \in \mathbb{Z}.
\end{equation}

The following theorem summarizes the properties of the constructed wavelets and wavelet spaces and shows that
\begin{equation} \label{def_Psi}
    \Psi = \left\{ \phi_{0,k}, k \in \mathbb{Z} \right\} \cup \left\{ \psi_{j,k}, j \geq 0, k \in \mathbb{Z} \right\}
\end{equation}
is a wavelet basis of the space $L^2 \left( \mathbb{R} \right).$ The symbol
$\delta_{i,j}$ denotes the Kronecker delta, i.e., $\delta_{ii}=1$ and $\delta_{ij}=0$ for $i \neq j$. 

\begin{thm} \label{properties_psi_on_R}
Under the above setting, the following statements hold:
\begin{itemize}
    \item[a)] Generators $\psi_i$, $i=1, \ldots, 6$, and wavelet
    $\psi_{j,k}$, $j, k \in \mathbb{Z}$, are cubic splines belonging to 
    $C^1 \left( \mathbb{R} \right)$.
    \item[b)] For $i=1, \ldots, 6,$ scaling generators $\phi_i$ and wavelet generators $\psi_i$ satisfy
    \begin{equation}
        \left\langle \phi_i, \phi_j \right\rangle = \delta_{i,j}, \quad
        \left\langle  \psi_i, \psi_j \right\rangle = \delta_{i,j}, \quad  \left\langle \phi_i, \psi_j \right\rangle = 0.
    \end{equation}
    \item[c)] For $i,j,k,l \in \mathbb{Z},$ scaling functions $\phi_{i,k}$ and wavelets 
    $\psi_{j,l}$ satisfy orthogonality conditions
    \begin{equation} \label{ortog_phi_ik_psi_jl_real}
        \left\langle \phi_{i,k}, \phi_{i,l} \right\rangle = \delta_{k,l}, \quad
        \left\langle  \psi_{i,k}, \psi_{j,l} \right\rangle = \delta_{i,j} \delta_{k,l}, 
    \end{equation}
    and if $i \leq j$, then
    \begin{equation} \label{ortog_phi_ik_psi_jl}
        \left\langle \phi_{i,k}, \psi_{j,l} \right\rangle = 0. 
    \end{equation}
    \item[d)] Wavelets $\psi_{j,k}$, $j, k \in \mathbb{Z}$, have four vanishing moments.
    \item[e)] The set $\Psi$ is an orthogonal basis of the space $L^2 \left( \mathbb{R} \right).$
    \item[f)] Basis functions from $\Psi$ are local in a sense that
    ${\rm diam \, supp } \, \phi_{0,k} \leq 2$ and ${\rm diam \, supp } \, \psi_{j,k} \leq 2^{1-j}$.
\end{itemize}
\end{thm}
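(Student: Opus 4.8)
The plan is to obtain all six assertions by unwinding the definitions and combining Theorem~\ref{thm_properties_scaling} with the orthogonality relations that are built into the construction of the generators $\psi_i$. Parts a) and f) are immediate: each $\psi_i$ is, by construction, a finite linear combination of integer translates and dilates of the $C^1$ cubic splines $\phi_1,\dots,\phi_6$ (for $\psi_5,\psi_6$ of the functions $\phi_{1,k}$), hence is itself a cubic spline in $C^1(\mathbb{R})$, and by \eqref{definition_psi_jk} so is every $\psi_{j,k}$; moreover $\psi_1,\psi_2$ are supported in $[0,1]$ and $\psi_3,\dots,\psi_6$ in $[-1,1]$, while each $\phi_i$ is supported in $[-1,1]$, so $\mathrm{diam\,supp}\,\phi_{0,k}\le 2$ and, after the dilation in \eqref{definition_psi_jk}, $\mathrm{diam\,supp}\,\psi_{j,k}\le 2^{1-j}$.

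For b) and c) I would first record that $\langle\phi_{i,k},\phi_{i,l}\rangle=\delta_{k,l}$ — in particular $\langle\phi_i,\phi_j\rangle=\delta_{i,j}$ — holds because, by Theorem~\ref{thm_properties_scaling}b), $\{V_j\}$ is an orthogonal MRA, so $\{\phi_{i,k}:k\in\mathbb{Z}\}$ is an orthonormal basis of $V_i$ (a dilation of the orthonormal basis $\{\phi_{0,k}\}$ of $V_0$). Next, each $\psi_i$ lies in $V_1$ and was constructed to be orthogonal to every $\phi_{0,k}$ whose support meets $\mathrm{supp}\,\psi_i$, hence is orthogonal to all of $V_0$; by dilation this gives $W_j\subseteq V_{j+1}$ and $W_j\perp V_j$. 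From this one reads off at once $\langle\phi_i,\psi_j\rangle=0$; the cross-level relations $\langle\psi_{i,k},\psi_{j,l}\rangle=0$ for $i\ne j$, since for $i<j$ one has $W_i\subseteq V_{i+1}\subseteq V_j\perp W_j$; and $\langle\phi_{i,k},\psi_{j,l}\rangle=0$ for $i\le j$, since $\phi_{i,k}\in V_i\subseteq V_j\perp W_j$. Normalisation and within-pair orthogonality of $\psi_1,\psi_2$, of $\psi_3,\psi_4$, and of $\psi_5,\psi_6$ are exactly what the Gram--Schmidt steps enforce; the remaining same-level relations $\langle\psi_{0,k},\psi_{0,l}\rangle=\delta_{k,l}$ reduce, after expressing the generators in the orthonormal system $\{\phi_{1,k}\}$, to the null-space conditions defining the coefficients (the rows of $\mathbf S$ and $\mathbf T$), to disjointness of supports, and to the auxiliary lemma converting orthogonality to the $\phi_{0,k}$ into orthogonality to $\psi_l(\cdot\pm1)$, $l=3,4$.

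Part d) follows from $\psi_i\perp V_0$: on a neighbourhood of $\mathrm{supp}\,\psi_i$ every polynomial $p$ of degree $\le 3$ coincides with a compactly supported element of $\tilde{V}_q\subseteq V_0$ (a cubic smoothly truncated at knots of $2^{-q}\mathbb{Z}$), so $\int_{\mathbb{R}}p\,\psi_i=0$ whenever $\deg p<L=4$, and the same holds for all $\psi_{j,k}$ by change of variables. For e), the MRA axioms give $V_{j+1}=V_j\oplus W_j$, $\bigcap_j V_j=\{0\}$ and $\overline{\bigcup_j V_j}=L^2(\mathbb{R})$, hence the orthogonal decomposition $L^2(\mathbb{R})=V_0\oplus\bigoplus_{j\ge 0}W_j$, with $\{\phi_{0,k}\}$ an orthonormal basis of $V_0$; it then suffices that $\{\psi_{j,k}:k\in\mathbb{Z}\}$ be an orthonormal basis of $W_j$, orthonormality being c) and completeness following from a local degrees-of-freedom count ($12$ per unit cell for $V_1$, $6$ for $V_0$, hence $6$ for $W_0$, matched by the six generators whose integer translates are linearly independent). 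Assembling these facts shows that $\Psi$ in \eqref{def_Psi} is an orthogonal basis of $L^2(\mathbb{R})$.

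The main obstacle is the bookkeeping underlying the within-level orthonormality in c) and the completeness of $\{\psi_{0,k}\}$ in $W_0$ needed for e): one must confirm that the finitely many orthogonality constraints imposed during the construction (the null spaces of $\mathbf S$ and $\mathbf T$, the Gram--Schmidt steps, the auxiliary lemmas) together with support disjointness really do produce the full set of relations and omit no generator. This is where a careful local linear-independence/dimension argument is needed, or, failing that, a direct verification using the explicit coefficients listed in Appendix~A.
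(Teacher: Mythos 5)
Your proposal is correct and follows essentially the same route as the paper's (very terse) proof: regularity and locality read off from the construction, orthogonality via $W_j\perp V_j$ together with the nestedness $V_i\subset V_j$ and $W_i\subset V_j$ for $i\le j$, vanishing moments from the fact that cubics are locally reproduced inside $V_j$, and completeness from the orthogonal multiresolution analysis of Theorem~\ref{thm_properties_scaling}. The only difference is one of detail, not of method: you make explicit what the paper dispatches with ``follows from the construction'' --- namely the truncated-polynomial argument for d) and the degrees-of-freedom count showing the translates of the six wavelet generators exhaust $V_1\ominus V_0$ --- which is a fair and indeed slightly more careful rendering of the same argument.
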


\begin{proof}
\begin{itemize}
    \item[a)] Because $\psi_{j,k}$, $j,k \in \mathbb{Z}$ are linear combinations of scaling functions, the assertion a) is valid.
    \item[b)] This statement follows directly from the construction.
    \item[c)] The orthogonality of scaling functions $\phi_{i,k}$ follows from b) and (\ref{definition_phi_jk}). Similarly, orthogonality of wavelets $\psi_{i,k}$ and $\psi_{i,l}$ follows from b) and (\ref{definition_psi_jk}). Due to $\left\langle \phi_i, \psi_j \right\rangle = 0$ in b), $V_j \perp W_j$. Since $V_j \perp W_j$ and $V_i \subset V_j$ and $W_i \subset V_j$ for $i \leq j$, we have $V_i \perp W_j$ and $W_i \perp W_j$ for $i \leq j$, which implies (\ref{ortog_phi_ik_psi_jl_real}) and  (\ref{ortog_phi_ik_psi_jl}). 
    \item[d)] Due to c), wavelets $\psi_{j,k}$, $j,k \in \mathbb{Z}$ are orthogonal to all functions from $V_j$, which is a cubic spline space containing all cubic polynomials. 
    The orthogonality of $\psi_{j,k}$ to all cubic polynomials means that wavelets have four vanishing moments.
    \item[e)] The orthogonality of $\Psi$ follows from d). By Theorem~\ref{thm_properties_scaling}~b), $\Psi$ generates $L^2 \left( \mathbb{R} \right)$. 
    \item[f)] The length of the support of generators $\phi_i$ and $\psi_i$, $i=1, \ldots, 6,$ is at most $2$. Hence, due to 
    (\ref{definition_phi_jk}) and (\ref{definition_psi_jk}), the assertion is valid.
\end{itemize}
\end{proof}

Due to Theorem~\ref{properties_psi_on_R}, all the requirements $A1)-A5)$ are met and thus $\Psi$ is indeed an orthogonal wavelet basis of  $L^2 \left( \mathbb{R} \right).$

\subsection{ Orthogonal wavelet basis on the interval}

In this section, we adapt the constructed wavelet basis from the real line to a bounded interval and homogeneous Dirichlet boundary conditions, and study the properties of the resulting basis. 

It is sufficient to focus on the interval $\left[ 0, 1 \right]$ because a wavelet basis on another bounded interval can be obtained by a linear transformation. The adaptation consists of preserving those basis functions whose whole supports are contained in $\left[ 0, 1 \right]$ and constructing new boundary functions.

First, we investigate the situation at the point $0$. The functions $\phi_5$ and $\phi_6$ are the only two scaling functions on the level zero with nonzero value at $0$. Because we want to obtain a new generator that is orthogonal to other functions on $\left[ 0, 1 \right]$ and is zero at $0$, we define this boundary scaling generator as
\begin{equation}
    \phi_L = \left( \phi_6 \left( 0 \right) \phi_5 -  \phi_5 \left( 0 \right) \phi_6 \right)  \chi_{\left[ 0 , 1 \right]}.
\end{equation}
Using a similar approach at the point $1$ leads to a boundary scaling function
\begin{equation}
    \phi_R = \left( \phi_6 \left( 0 \right) \phi_5 \left( \cdot -1 \right)  -  \phi_5 \left( 0 \right) \phi_6 \left( \cdot -1 \right) \right) \chi_{\left[ 0 , 1 \right]}.
\end{equation}
The graphs of boundary scaling generators are displayed in Figure~\ref{Fig6}.

\begin{figure}[ht]
\caption{\emph{ Boundary orthogonal scaling generators $\psi_L$ (left) and $\psi_R$ (right) with the support $\left[ 0, 1 \right]$.  }}
\includegraphics[width=13.0cm]{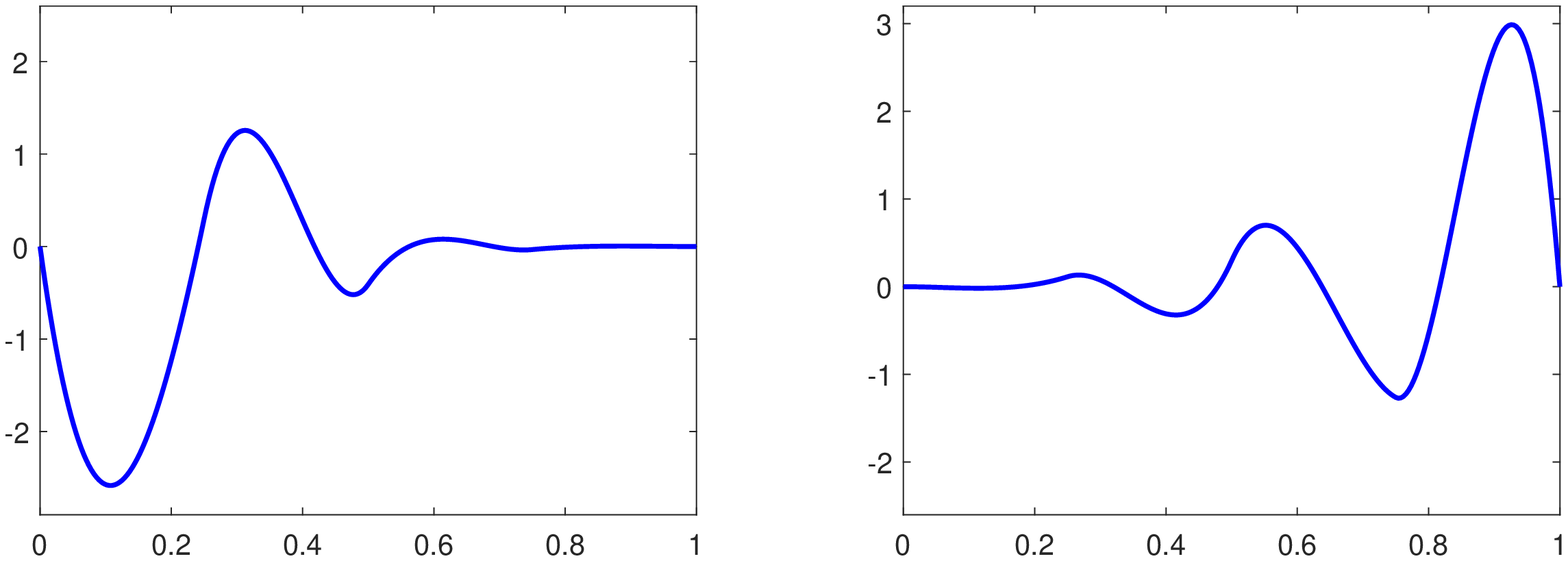} 
\label{Fig6}
\end{figure} 

When it comes to boundary wavelet generators at the point $0$, the situation is similar in the sense that new boundary wavelets are linear combinations of wavelets that are nonzero at $0$. In addition, the new wavelet generators must be orthogonal to $\phi_L$. Therefore, we define functions $g_{L_1}$ and $g_{L_2}$ as
\begin{equation}
    g_{L_i} = \sum_{j=1}^4 b_j^i \, \psi_{j+2}, \quad i=1,2.
\end{equation}
Because we require $g_{L_i} \left( 0 \right) = 0$ and $\left\langle \phi_L, g_{L_i} \right\rangle = 0$, the coefficients $b_j^i$ are elements of vectors $\mathbf{b}^i$ forming a basis of the null space of the matrix $\mathbf{G}$ given by
\begin{equation}
    G_{1,j} = \psi_{j+2} \left( 0 \right), \quad 
    G_{2,j} = \left< \phi_L, \psi_{j+2} \right>, \quad j=1, \ldots, 4.
\end{equation}
Generators $\psi_{L_1}$ and $\psi_{L_2}$ are obtained by orthogonalization and normalization of $g_{L_1}$ and $g_{L_2}$, similarly to the generators above.

The construction of right boundary wavelet generators is analogous. The functions $h_{R_1}$ and $h_{R_2}$ are defined as
\begin{equation}
    h_{R_i} = \sum_{j=1}^4 c_j^i \, \psi_{j} \left( \cdot - 1 \right), \quad i=1,2,
\end{equation}
where vectors $\mathbf{c}^i$ form the null space of matrix $\mathbf{H}$ with entries
\begin{equation}
   H_{1,j} = \psi_{j+2} \left( 0 \right), \quad  H_{2,j} = \left< \phi_R, \psi_{j+2} \left( \cdot - 1 \right) \right>, \quad j=1, \ldots, 4.
\end{equation}
Generators $\psi_{R_1}$ and $\psi_{R_2}$ are obtained by orthogonalization and normalization of $h_{R_1}$ and $h_{R_2}$. The boundary wavelet generators are displayed in Figure~\ref{Fig7}.

\begin{figure}[ht]
\caption{\emph{ Boundary orthogonal wavelet generators $\psi_{L_1}$ (top left), $\psi_{L_2}$ (top right), $\psi_{R_1}$ (bottom left), and $\psi_{R_2}$ (bottom right) with the support $\left[ 0, 1 \right]$.  }}
\includegraphics[width=13.0cm]{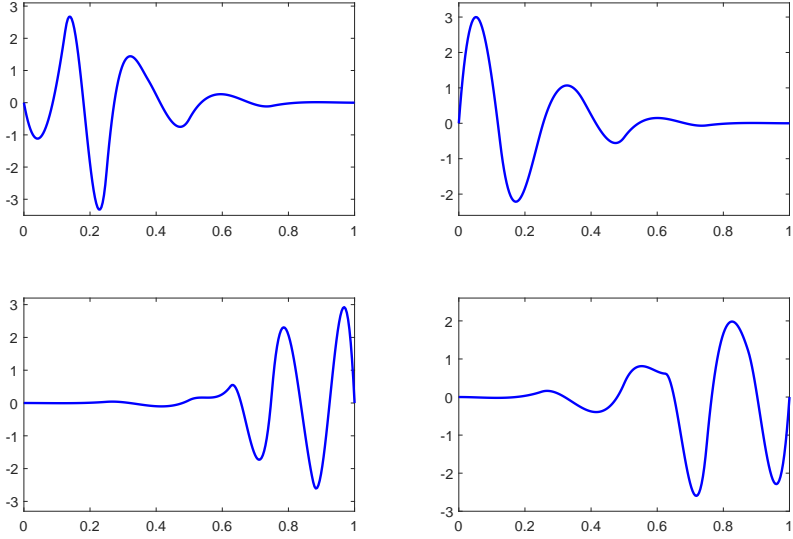} 
\label{Fig7}
\end{figure} 

Let us define scaling functions on the level $0$ as $\phi_{0,1} = \phi_L$, $\phi_{0,6}=\phi_R$, and $\phi_{0,k} = \phi_{k-1}$ for $k=2, \ldots, 5$.
On each level $j \geq 0$, we define boundary wavelets
\begin{equation}
\psi_{j,i} = \psi_{L_i} \left( 2^j \cdot \right),  \quad
\psi_{j,6 \cdot 2^j +i -2} = \psi_{R_i} \left( 2^j \left( 1 - \cdot \right) \right), \quad i=1,2.
\end{equation}
and inner wavelets
\begin{equation}
\psi_{j,6k+l+2} \left(x\right) = 2^{j/2} \psi_l \left( 2^j x - k \right), \quad l=1, \ldots, 6, \quad 3 \leq 6k+l \leq 6 \cdot 2^j -2.
\end{equation}

We define an index set $\mathcal{J}_j = \left\{ k \in \mathbb{Z}, 1 \leq k \leq 6 \cdot 2^j \right\}$ and assign the constructed functions to sets
\begin{equation}
    \Phi_0 = \left\{ \phi_{0,k}, k \in \mathcal{J}_0 \right\}, \quad
    \Psi_j = \left\{ \psi_{j,k}, k \in \mathcal{J}_j \right\},
\end{equation}
and 
\begin{equation} \label{definition_Psi_I}
    \Psi^I = \Phi_0 \cup \bigcup_{j=0}^{\infty} \Psi_j, \quad
    \Psi^k = \Phi_0 \cup \bigcup_{j=0}^{k} \Psi_j.
\end{equation}

Our aim is to show that $\Psi^I$ is an orthogonal wavelet basis of the space $L^2 \left( 0, 1 \right).$ The properties $A1)-A5)$ are studied in the following theorem.

\begin{thm} \label{properties_psi_on_I}
Under the above setting, the following statements hold:
\begin{itemize}
    \item[a)] All basis functions from $\Psi^I$ are cubic splines belonging to 
    $C^1 \left( 0, 1 \right)$.
    \item[b)] Basis functions from $\Psi^I$ satisfy orthogonality conditions
    \begin{equation}
        \left\langle \phi_{0,k}, \phi_{0,l} \right\rangle = \delta_{k,l}, \quad
        \left\langle  \psi_{i,k}, \psi_{j,l} \right\rangle = \delta_{i,j} \delta_{k,l}, \quad
        \left\langle \phi_{0,k}, \psi_{j,l} \right\rangle = 0. 
    \end{equation}
    \item[c)] Inner wavelets $\psi_{j,k}$, $k = 3, \ldots, 6 \cdot 2^j -2$ have four vanishing moments.
    \item[d)] The set $\Psi^I$ is an orthogonal basis of $L^2 \left( 0, 1 \right).$
    \item[f)] Basis functions from $\Psi^I$ are local in the sense that
    ${\rm diam \, supp } \, \phi_{0,k} \leq 2$ and ${\rm diam \, supp } \, \psi_{j,k} \leq 2^{1-j}$.
\end{itemize}
\end{thm}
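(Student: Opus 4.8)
The plan is to reduce every assertion to the corresponding fact on the real line, already available from Theorems~\ref{thm_properties_scaling} and~\ref{properties_psi_on_R}, together with the specific boundary constructions, and to handle the global structure through the nested sequence of spline spaces generated on the interval. For part a) I would observe that on the open interval $(0,1)$ every inner scaling function and every inner wavelet coincides with a dilate--translate of one of the generators $\phi_l$ or $\psi_l$, which are cubic splines in $C^1(\mathbb{R})$, while the boundary functions $\phi_L,\phi_R,\psi_{L_i},\psi_{R_i}$ are finite linear combinations of such functions; hence all elements of $\Psi^I$ are $C^1$ cubic splines on $(0,1)$. The vanishing conditions imposed at the endpoints (e.g. $g_{L_i}(0)=0$) serve only to enforce the homogeneous Dirichlet condition and are irrelevant for regularity on the open interval.

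The heart of the proof is part b), which I would split into cases. Orthonormality of the inner functions, among themselves and across levels, is inherited verbatim from (\ref{ortog_phi_ik_psi_jl_real})--(\ref{ortog_phi_ik_psi_jl}), since such functions are supported strictly inside $(0,1)$ and therefore have the same $L^2(0,1)$ inner products as the corresponding functions on $\mathbb{R}$. For pairs involving at least one boundary function I would first record the within-level relations, which follow from the construction: $\phi_L$ (normalized) is orthogonal to $\phi_1,\dots,\phi_4$ and to $\phi_R$ because the overlaps of their supports lie in $[0,1]$, so one reduces to the real-line relations of Theorem~\ref{properties_psi_on_R}~b),~c); orthonormality of $\psi_{L_1},\psi_{L_2}$ (resp.\ $\psi_{R_1},\psi_{R_2}$) and their orthogonality to $\phi_L$ (resp.\ $\phi_R$) are enforced by the null-space conditions defining $\mathbf{G}$, $\mathbf{H}$ and the subsequent orthogonalization and normalization; and orthogonality of left to right boundary functions follows from essentially disjoint supports. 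To obtain the cross-level relations and the orthogonality of boundary wavelets to inner wavelets, I would set $V_j^I:=\mathrm{span}\,\bigl(\Phi_0\cup\Psi_0\cup\dots\cup\Psi_{j-1}\bigr)$ and prove by induction on $j$ that $V_j^I$ equals the span of all level-$j$ scaling functions whose adapted supports lie in $[0,1]$, and that $\Psi_j$ is an orthonormal set spanning $V_{j+1}^I\ominus V_j^I$; the relations $\langle\psi_{i,k},\psi_{j,l}\rangle=\delta_{i,j}\delta_{k,l}$ and $\langle\phi_{0,k},\psi_{j,l}\rangle=0$ then follow from the mutual orthogonality of these telescoping complements, exactly as in the proof of Theorem~\ref{properties_psi_on_R}~c).

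Part c) is immediate: every inner wavelet is a rescaled translate of some generator $\psi_l$, and the four vanishing moments~(\ref{vanishing_moments}) of $\psi_l$ established in Theorem~\ref{properties_psi_on_R}~d) are invariant under dilation and translation. For part d), orthogonality is exactly part b), and density of $\bigcup_j V_j^I$ in $L^2(0,1)$ follows from the interval analogue of the intertwining relation (\ref{intertwinning_MRA}) — the spaces $V_j^I$ contain the corresponding Hermite cubic-spline spaces with dyadic knots of the respective level — so $\Psi^I$ is a complete orthonormal system, i.e.\ an orthogonal basis of $L^2(0,1)$. Finally, part f) is read off directly from the definitions of $\phi_{0,k}$ and $\psi_{j,k}$ together with the fact, recorded in Theorem~\ref{properties_psi_on_R}~f), that all generators $\phi_l,\psi_l$ are supported in an interval of length at most $2$.

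The main obstacle is the cross-level bookkeeping in part b): unlike the real-line situation, the boundary functions are produced by truncation to $[0,1]$, so one cannot simply invoke $V_0\perp W_j$ and the locality of supports. Establishing the interval-MRA identity $V_j^I\subset V_{j+1}^I$ together with $\mathrm{span}\,\Psi_j=V_{j+1}^I\ominus V_j^I$ — which forces one to check that the numbers and the supports of the boundary and inner functions at each level fit together exactly — is the delicate step, and it is on this step that the remaining parts of b), and hence d), depend.
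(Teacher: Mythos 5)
Your proposal is correct and takes essentially the same route as the paper, whose proof is literally the one-line remark that the argument is analogous to Theorem~\ref{properties_psi_on_R}: within-level orthogonality from the construction and from the real-line relations (the support overlaps lying inside $[0,1]$), cross-level relations via the nested interval spaces and their orthogonal complements, vanishing moments of inner wavelets by dilation/translation invariance, locality from the generators, and density from the interval multiresolution structure — so your sketch is in fact more detailed than the paper's. One small correction: at level $0$ the left and right boundary functions do not have essentially disjoint supports; their orthogonality follows instead from the same reduction you use elsewhere, namely that the overlap of the untruncated supports is contained in $[0,1]$, so the $L^2(0,1)$ inner product coincides with the real-line one, which vanishes by Theorem~\ref{properties_psi_on_R}.
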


\begin{proof}
The proof is analogous to the proof of Theorem~\ref{properties_psi_on_R}.
\end{proof}

Due to Theorem~\ref{properties_psi_on_I}, we can conclude that $\Psi^I$ is an orthogonal wavelet basis of $L^2 \left( 0, 1 \right).$ It remains to show that the assumption $A6)$ is also satisfied. To this end, we employ the following theorem. For the proof, see  \cite{Cohen2003, Dahmen1996, Dijkema2010}. 

\begin{thm} \label{norm_equivalences}
Let $j_0 \in \mathbb{N}$ and for $j\geq j_0$ let $V_j$ and $\tilde{V}_j$ be subspaces of the space $H \subset L^2 \left( I \right)$ such that $V_j \subset V_{j+1}$, $\tilde{V}_j \subset \tilde{V}_{j+1}$, and ${\rm dim} \ V_j = {\rm dim} \ \tilde{V}_j < \infty$. Let $\Phi_j$ be bases of $V_j$, $\tilde{\Phi}_j$ be bases of $\tilde{V}_j$, and $\Psi_j$ be bases of $\tilde{V}_j^{\perp} \cap V_{j+1}$, where $\tilde{V}_j^{\perp}$ denotes the $L^2$-orthogonal complement of $\tilde{V}_j$ in $H$. Moreover, let the Riesz bounds with respect to the $L^2$-norm of $\Phi_j$, $\tilde{\Phi}_j$, and $\Psi_j$ be uniformly bounded. Let $\Psi$ be composed of $\Phi_{j_0}$ and $\Psi_j$, $j \geq j_0$,  as in (\ref{definition_Psi}).
Furthermore, we assume that
\begin{equation} \label{def_Gj}
\mathbf{\Gamma}_j  = \left\langle \Phi_j, \tilde{\Phi}_j \right\rangle
\end{equation}
is invertible and that the spectral norm of $\mathbf{\Gamma}_j^{-1}$ is bounded independently on~$j$. In addition, for some positive constants $C$, $\gamma$ and $d$, such that $\gamma<d$, let
\begin{equation} \label{jackson}
\inf_{v_j \in V_j} \left\| v - v_j \right\| \leq C 2^{-j t } \left\|v \right\|_{ t }, \ v \in H^t \left( I \right) \cap H, \ 0 \leq t \leq d,
\end{equation}
and 
\begin{equation} \label{bernstein}
\left\|v_j \right\|_{ s } \leq C 2^{js}  \left\| v_j \right\|, \ v_j \in V_j,
\ 0\leq s < \gamma,
\end{equation}
and, similarly, let (\ref{jackson}) and (\ref{bernstein}) hold for $\tilde{\gamma}$ and $\tilde{d}$ on the dual side. Then 
\begin{equation}
\left\{ \psi_{\lambda} / \left\| \psi_{\lambda} \right\|_{ s }, \psi_{\lambda} \in \Psi \right\}
\end{equation}
is a Riesz sequence in $H^s \left( I \right)$ for $s \in \left(- \tilde{\gamma}, \gamma \right)$.
\end{thm}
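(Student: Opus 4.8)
The plan is to derive the Riesz-sequence property from the classical norm-equivalence machinery for biorthogonal multiresolution analyses, following \cite{Cohen2003,Dahmen1996,Dijkema2010}. The starting point is the family of oblique projections attached to the dual spaces: for $v\in H$ let $P_j v\in V_j$ be characterised by $\langle P_j v,\tilde\Phi_j\rangle=\langle v,\tilde\Phi_j\rangle$. This $P_j$ is well defined and uniformly bounded on $L^2(I)$ precisely because $\mathbf\Gamma_j=\langle\Phi_j,\tilde\Phi_j\rangle$ is invertible with $\|\mathbf\Gamma_j^{-1}\|$ bounded independently of $j$ and because $\Phi_j$ and $\tilde\Phi_j$ have uniformly bounded $L^2$-Riesz bounds; moreover $P_j$ reproduces $V_j$. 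I would then set $Q_j:=P_{j+1}-P_j$, which is a projection with range $\tilde V_j^{\perp}\cap V_{j+1}$ (the span of $\Psi_j$), verify that the $Q_j$ are uniformly $L^2$-bounded, and that $v=P_{j_0}v+\sum_{j\ge j_0}Q_j v$ with convergence in $L^2$ for every $v\in H$.

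The analytic core is to upgrade the $L^2$-boundedness of the $Q_j$ to the weighted norm equivalence
\[
\|v\|_s^2\ \sim\ \|P_{j_0}v\|^2+\sum_{j\ge j_0}2^{2js}\,\|Q_j v\|^2,\qquad s\in(-\tilde\gamma,\gamma),
\]
valid for $v$ in the closure in $H^s(I)$ of $\bigcup_j V_j$. For $s\ge 0$ this is the standard Bernstein--Jackson argument: the direct estimate \eqref{jackson} together with the $L^2$-boundedness and $V_j$-reproduction of $P_j$ gives $\|(I-P_j)v\|\lesssim 2^{-jt}\|v\|_t$; combining this with the inverse estimate \eqref{bernstein} and characterising $H^s$, $0\le s<\gamma$, as an interpolation space between $L^2$ and $H^d$ yields the two-sided bound after telescoping $v=\sum_j Q_j v$. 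For $s<0$ one repeats the argument in the dual pairing, using the dual Jackson and Bernstein estimates with exponents $\tilde d>\tilde\gamma$ and the fact that $Q_j^{*}$ is the corresponding projector for the dual MRA; this is exactly where the restriction $s>-\tilde\gamma$ is needed.

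Finally I would pass from spaces to coefficients. Since $\Psi_j$ is a basis of $\tilde V_j^{\perp}\cap V_{j+1}$ with uniformly bounded $L^2$-Riesz bounds, the synthesis $v=\sum_{j,k}d_{j,k}\psi_{j,k}$ satisfies $\|Q_j v\|^2\sim\sum_k|d_{j,k}|^2$ uniformly in $j$, and analogously at the coarsest level; and the locality of the $\psi_{j,k}$ combined with \eqref{jackson}--\eqref{bernstein} applied on a single level gives $\|\psi_{j,k}\|_s\sim 2^{js}$ for $s\in(-\tilde\gamma,\gamma)$ (only finitely many reference generators occur, so this reduces to a fixed constant times the $H^s$-scaling of a compactly supported $C^1$ cubic spline). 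Substituting these facts into the weighted equivalence gives
\[
\Big\|\sum_{\lambda}d_{\lambda}\,\psi_{\lambda}/\|\psi_{\lambda}\|_s\Big\|_s^2\ \sim\ \sum_{\lambda}|d_{\lambda}|^2,
\]
i.e.\ the two inequalities in \eqref{riesz}; since density in $H^s(I)$ is not asserted, this is a Riesz sequence, as claimed.

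I expect the negative-smoothness regime $s<0$ to be the main obstacle. Making the duality argument precise requires that the dual functionals underlying $P_j$ come from a multiresolution analysis $\{\tilde V_j\}$ that itself satisfies Jackson and Bernstein inequalities with the stated exponents, and that the mixed Gram matrices remain uniformly well conditioned under the $H^{-s}$ pairing --- equivalently, that $\|Q_j\|$ is uniformly bounded on $H^{-s}$. Once the projector bounds are available on the full scale $(-\tilde\gamma,\gamma)$, the telescoping/interpolation step and the basis-stability manipulations are routine.
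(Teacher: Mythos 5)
The paper does not prove this theorem itself; it defers to the cited literature (Cohen, Dahmen, Dijkema--Stevenson), and your sketch reproduces exactly the standard argument used there: uniformly bounded oblique projectors $P_j$ built from the invertible Gram matrices, the telescoping decomposition $Q_j=P_{j+1}-P_j$, the Bernstein--Jackson/interpolation norm equivalence with duality for $s<0$, and the passage to coefficients via the uniform level-wise Riesz bounds and $\|\psi_{j,k}\|_s\sim 2^{js}$. So your proposal is correct and takes essentially the same route as the proof the paper relies on.
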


Estimate (\ref{jackson}) characterizes the approximation properties of spaces $V_j$ and is often referred to as the {\it Jackson estimate}. Estimate (\ref{bernstein}) relates to the smoothness properties of $V_j$, and is called the {\it Bernstein estimate}.

\begin{thm} \label{basis_Hs}
For $s \in \left( -1.5, 1.5 \right)$, the set $\left\{ \psi_{\lambda} / \left\| \psi_{\lambda} \right\|_{ s }, \psi_{\lambda} \in \Psi^I \right\}$ is a Riesz sequence in $H^s$, and the set $\left\{ \psi_{\lambda} / \left\| \psi_{\lambda} \right\|_{ 1 }, \psi_{\lambda} \in \Psi^I \right\}$ is a Riesz basis of $H_0^1 \left(0, 1 \right).$
\end{thm}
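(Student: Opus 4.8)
The plan is to deduce both assertions from the general norm-equivalence result Theorem~\ref{norm_equivalences}, exploiting the fact that $\Psi^I$ is $L^2$-orthogonal so that the primal and dual multiresolution hierarchies can be taken to coincide. Concretely, I would set $H = L^2(0,1)$, $j_0 = 0$, and for $j \geq 0$ let $V_j = \tilde V_j$ be the span of $\Phi_0 \cup \Psi_0 \cup \dots \cup \Psi_{j-1}$; by the orthogonality relations in Theorem~\ref{properties_psi_on_I}\,b), this finite family is $L^2$-orthonormal, so I take it as the common basis $\Phi_j = \tilde\Phi_j$, and I take $\Psi_j$ to be the wavelet collection of level $j$. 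With these choices the nestedness $V_j \subset V_{j+1}$ holds, $\dim V_j = \dim \tilde V_j < \infty$, and $\Psi_j$ is a basis of $V_j^{\perp} \cap V_{j+1} = \tilde V_j^{\perp} \cap V_{j+1}$ (the dimensions match and level-$j$ wavelets are orthogonal to all of $V_j$ by Theorem~\ref{properties_psi_on_I}\,b)). Since $\Phi_j$, $\tilde\Phi_j$ and $\Psi_j$ are all orthonormal, their $L^2$-Riesz bounds equal $1$, and the Gram matrix $\mathbf\Gamma_j = \langle \Phi_j, \tilde\Phi_j \rangle$ is the identity, hence invertible with $\|\mathbf\Gamma_j^{-1}\| = 1$ uniformly in $j$.

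It then remains to verify the Jackson estimate~(\ref{jackson}) and the Bernstein estimate~(\ref{bernstein}). The spaces $V_j$ consist of $C^1$ cubic splines on a uniform dyadic mesh of $[0,1]$ adapted to the homogeneous Dirichlet conditions, so standard spline approximation theory (as used in \cite{Dahmen1996}) yields~(\ref{jackson}) with $d = 4$ and the inverse estimate~(\ref{bernstein}) with $\gamma = 3/2$, the exponent $3/2$ being dictated by the fact that a piecewise-cubic function of class $C^1$ lies in $H^s$ precisely for $s < 3/2$. Because the primal and dual hierarchies are identical, the dual-side estimates hold with $\tilde d = 4$ and $\tilde\gamma = 3/2$, and the required condition $\gamma < d$ is satisfied. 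All hypotheses of Theorem~\ref{norm_equivalences} being met, its conclusion gives that $\{\psi_\lambda / \|\psi_\lambda\|_s : \psi_\lambda \in \Psi^I\}$ is a Riesz sequence in $H^s$ for every $s \in (-\tilde\gamma, \gamma) = (-1.5, 1.5)$, which is the first claim.

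For the second claim I specialize to $s = 1$, so the normalized system is already a Riesz sequence in $H^1$, and it suffices to identify its closed span with $H_0^1(0,1)$. Each function of $\Psi^I$ belongs to $H_0^1(0,1)$: the interior scaling functions and wavelets have support strictly inside $(0,1)$, while the boundary generators $\phi_L$, $\phi_R$, $\psi_{L_i}$, $\psi_{R_i}$ were built to vanish at the endpoints; hence $\overline{\mathrm{span}\,\Psi^I} \subset H_0^1(0,1)$. Conversely, interpolating~(\ref{jackson}) between $t = 1$ and larger $t$ gives $\inf_{v_j \in V_j} \|v - v_j\|_1 \to 0$ for $v$ in the dense subclass $H^t(0,1) \cap H_0^1(0,1)$; since $\overline{\bigcup_j V_j} \subseteq \overline{\mathrm{span}\,\Psi^I}$ in the $H^1$-norm, the span is dense in $H_0^1(0,1)$. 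A Riesz sequence with dense span is a Riesz basis, which finishes the proof.

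The one genuinely delicate point I expect is the Jackson estimate of full order $d = 4$ for the boundary-adapted spaces: near the endpoints polynomial reproduction must fail (constants do not belong to $H_0^1$), so one has to argue that approximation order $4$ is nevertheless retained for targets constrained to $H_0^1$, typically by combining the interior spline estimate with a separate bound on a boundary layer of fixed width, or by invoking the approximation properties already known for this type of biorthogonal spline construction. The Bernstein exponent $\gamma = 3/2$, though it is exactly what pins down the endpoints $\pm 1.5$ of the admissible range of $s$, is routine once the $C^1$ regularity and piecewise-polynomial structure from Theorem~\ref{properties_psi_on_I} are used.
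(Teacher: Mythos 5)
Your proposal is correct and follows essentially the same route as the paper: both invoke Theorem~\ref{norm_equivalences} with the primal and dual hierarchies taken identical (so that $\mathbf{\Gamma}_j$ is the identity by $L^2$-orthogonality), together with the Jackson estimate of order $d=\tilde d=4$ and the Bernstein estimate with $\gamma=\tilde\gamma=1.5$, yielding the Riesz-sequence property on $(-1.5,1.5)$. Your added density argument upgrading the $s=1$ case to a Riesz basis of $H_0^1(0,1)$, and your remark about verifying the boundary-adapted Jackson estimate, merely make explicit what the paper leaves to the cited references.
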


\begin{proof}
Due to the orthogonality, matrix $\mathbf{\Gamma}_j$ defined by (\ref{def_Gj}) is an identity matrix.
The estimates (\ref{jackson}) and (\ref{bernstein}) are satisfied for $d = \tilde{d}=4$ and $\gamma=\tilde{\gamma}=1.5$. These parameters depend
on the polynomial exactness and smoothness of the primal and dual spaces; see \cite{Cohen2003,Dahmen1996}. Due to these facts and Theorem~\ref{norm_equivalences}, the proof is complete.
\end{proof}

As already mentioned, we performed extensive numerical experiments to construct an $L^2$-orthogonal cubic spline wavelet basis on the interval adapted to homogeneous boundary conditions, which has also small condition number with respect to the $H^1$-seminorm. This basis is presented in~\ref{Appendix_1}. In Figure~\ref{Fig8}, the condition numbers of the set $\left\{ \psi_{\lambda} / \left| \psi_{\lambda} \right|_{ 1 }, \psi_{\lambda} \in \Psi^k \right\}$ with respect to $H^1$-seminorm are presented. These condition numbers are square roots of the condition numbers of stiffness matrices for these sets and are close to the condition numbers with respect to the $H^1$-norm.

\begin{figure}
    \centering
    \includegraphics[width=6.0cm]{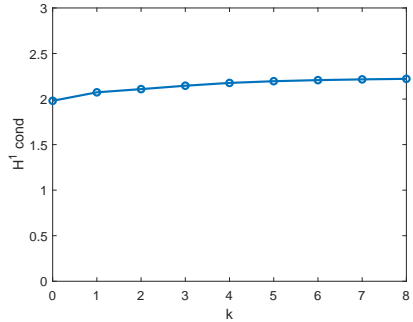}
    \caption{Condition numbers with respect to the $H^1$-seminorm for the sets $\Psi^k$ normalized in the $H^1$-seminorm. }
    \label{Fig8}
\end{figure}

\subsection{ Orthogonal wavelets on sparse tensor product spaces }
\label{sparse_tensor_product_wavelets}

Wavelet bases on the unit cube $\Omega$ can be created using the tensor product of wavelet bases on the interval $\left( 0, 1 \right)$. This can be achieved by several approaches, such as an isotropic approach \cite{Cerna2021a}, an anisotropic approach \cite{Cerna2020,Cerna2021a}, or the sparse tensor product \cite{Hilber2013}. When solving higher-dimensional problems, the isotropic and anisotropic approaches lead to an excessive number of functions $N \sim \mathcal{O} \left( 2^{dk} \right)$ at higher levels $k$, and $L^2$ error of approximation is of order $\mathcal{O} \left( N^{-p/d} \right)$, where $p$ is order of spline wavelet. This phenomenon, known as the ``curse of dimensionality," can be circumvented by using sparse grids  \cite{Griebel1995,Griebel1999,Hilber2013,Petersdorff2004,Reich2008}. In this case, the number of basis functions is $\mathcal{O} \left( 2^{k} k^{d-1} \right)$ and the $L^2$--error of approximation is the same as in one dimension, up to log terms. Therefore, we use the sparse tensor product approach here.

Let $\Psi^I$ be a wavelet basis on the interval $I=\left[ 0 , 1 \right]$ given by (\ref{definition_Psi_I}). To simplify notations redefine  $\mathcal{J}_{0} = \left\{ -5, \ldots, 6 \right\}$ and denote
$\psi_{0,k-6}=\phi_{0,k}$ for $k = 1, \ldots, 6.$
Then the set of all possible indices for basis functions is 
\begin{equation}
\mathcal{J} =\left\{ \left(j,k\right), \ j \geq 0, \ k \in \mathcal{J}_j   \right\},
\end{equation}
and the wavelet basis $\Psi^I$ can also be expressed as
\begin{equation}
\Psi^I=\left\{ \psi_{j,k}, \ j \geq 0, \ k \in \mathcal{J}_j \right\}= \left\{ \psi_{\lambda}, \ \lambda \in \mathcal{J} \right\}.
\end{equation}

Now, consider dimension $d \geq 1$. The index set is defined as
\begin{equation}
    \mathcal{J}= \left\{ \lambda= \left( \lambda_1, \ldots, \lambda_d \right): \lambda_i =
    \left( j_i, k_i \right), j_i \geq 0, k_i \in \mathcal{J}_{j_i} \right\}.
\end{equation}
On the unit cube $\Omega$, we consider a set $\Psi$ containing functions 
\begin{equation}
\psi_{\lambda } = \bigotimes\limits_{i=1}^d \psi_{\lambda_i}, \quad \lambda= \left( \lambda_1, \ldots, \lambda_d \right) \in \mathcal{J},
\end{equation}
where $\otimes$ denotes the tensor product, i.e., $\left( u \otimes v \right) \left(x_1, x_2\right)=u \left(x_1\right) v\left(x_2\right)$. 

The set of all such functions is $\Psi = \left\{ \psi_{\lambda}, \lambda \in \mathcal{J} \right\}$. We show below that generalizations of A1) -- A6) are valid and, therefore, this set is also referred to as {\it a wavelet basis}. Recall that the level of one-dimensional function $\psi_{\lambda}$ is denoted $\left| \lambda \right|$. For $k\geq 0$, the so-called sparse tensor product approach uses the set of functions
\begin{equation} \label{Psi_k_sparse}
\Psi^k=\left\{ \psi_{ \lambda }:
\lambda = \left( \lambda_1, \ldots, \lambda_d \right), \ 
\sum_{i=1}^d \left| \lambda_i \right| \leq k \right\},
\end{equation}
whereas the so-called anisotropic tensor product approach employs the set
\begin{equation} 
\Psi^k_{ani}=\left\{ \psi_{ \lambda }:
\lambda = \left( \lambda_1, \ldots, \lambda_d \right), \ 
\left| \lambda_i \right| \leq k \right\}.
\end{equation}

These two kinds of sets significantly differ in a number of functions, as shown in the following lemma. Symbol $\#$ denotes the cardinality.
\begin{lem} \label{lemma_number_sparse}
For $d \geq 1$ and $k \geq 1$, 
\begin{equation}
\# \Psi^k_{ani} = 6^d 2^{dk} = \mathcal{O} \left( 2^{dk} \right), \quad
\# \Psi^k = \mathcal{O} \left( 2^k k^{d-1} \right).
\end{equation}
More precisely, 
\begin{eqnarray}
\# \Psi^k &=& 6^2 \left( k + 2 \right) 2^{k+1}, \ \text{if} \ d=2, \\
\nonumber \# \Psi^k &=& 6^3 \left( k^2+7k+8 \right) 2^{k}, \ \text{if} \ d=3, \\
\nonumber \# \Psi^k &=& 6^4 \left( \frac{k^3}{6} + \frac{5k^2}{2} + \frac{28k}{3} + 8 \right) 2^{k+1}, \ \text{if} \ d=4, \\
\nonumber \# \Psi^k &=& 6^5 \left( \frac{k^4}{24} + \frac{(13 k^3}{12} + \frac{ 203 k^2}{24} + \frac{269 k}{12} + 16 \right) 2^{k+1}, \ \text{if} \ d=5. 
\end{eqnarray}
\end{lem}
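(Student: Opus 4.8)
The plan is to reduce the whole count to the one-dimensional level cardinalities $a_j := \#\mathcal{J}_j$ and then do elementary combinatorics. From the construction on the interval and the redefinition $\mathcal{J}_0 = \{-5,\ldots,6\}$ we have $a_0 = 12$ and $a_j = 6\cdot 2^j$ for $j\ge 1$, so the number of one-dimensional basis functions $\psi_\mu$ with $|\mu|\le k$ is $M_k := \sum_{j=0}^k a_j = 12 + \sum_{j=1}^k 6\cdot 2^j$, a geometric series summing to a fixed multiple of $2^k$. For the anisotropic set $\Psi^k_{ani}$ the conditions $|\lambda_i|\le k$, $i=1,\ldots,d$, are independent, so $\#\Psi^k_{ani} = M_k^{\,d}$; this is $\mathcal O(2^{dk})$ and, after evaluating the geometric series, gives the stated closed form. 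This is the easy half.

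For the sparse set I would count over the simplex. A $d$-dimensional basis function $\bigotimes_{i=1}^d \psi_{\lambda_i}$ lies in $\Psi^k$ iff its multi-level $(j_1,\ldots,j_d)$, $j_i = |\lambda_i|$, satisfies $j_1+\cdots+j_d\le k$, and for each such multi-level there are $\prod_{i=1}^d a_{j_i}$ choices of the in-level indices. Hence
\[
\#\Psi^k = \sum_{\substack{j_1,\ldots,j_d\ge 0\\ j_1+\cdots+j_d\le k}} \ \prod_{i=1}^d a_{j_i}.
\]
The cleanest evaluation uses the generating function $A(x) = \sum_{j\ge 0} a_j x^j$. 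With $a_0 = 12$ and $a_j = 6\cdot 2^j$ for $j\ge 1$ one gets $A(x) = 12 + \frac{12x}{1-2x} = \frac{12(1-x)}{1-2x}$, and accounting for the slack $k-(j_1+\cdots+j_d)$ by an extra factor $\frac{1}{1-x}$ yields $\#\Psi^k = [x^k]\,\frac{A(x)^d}{1-x} = 12^d\,[x^k]\,\frac{(1-x)^{d-1}}{(1-2x)^d}$. Equivalently one may induct on $d$ using the convolution recursion $f_d(k) = \sum_{j=0}^k a_j f_{d-1}(k-j)$ with $f_0\equiv 1$.

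From the closed form $\frac{12^d(1-x)^{d-1}}{(1-2x)^d}$ both assertions follow. For the growth order: the function is analytic on the open disk $|x|<1/2$ and has a pole of order exactly $d$ at $x=1/2$, so its $k$-th Taylor coefficient equals $2^k$ times a polynomial in $k$ of degree $d-1$; hence $\#\Psi^k = \mathcal O(2^k k^{d-1})$. For the explicit formulas, expand $(1-x)^{d-1}$ by the binomial theorem and $\frac{1}{(1-2x)^d} = \sum_{m\ge 0}\binom{m+d-1}{d-1}2^m x^m$ by the negative-binomial series, multiply, and read off
\[
\#\Psi^k = 12^d \sum_{i=0}^{d-1} (-1)^i \binom{d-1}{i}\binom{k-i+d-1}{d-1} 2^{k-i};
\]
simplifying this finite sum for $d=2,3,4,5$ in turn and regrouping powers of $2$ (so that $12^d$ becomes the $6^d$ prefactor times a suitable $2^{k+1}$) reproduces the tabulated polynomials.

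The only genuine obstacle is bookkeeping, in two places. First, one must correctly record that the coarsest one-dimensional level carries $6\cdot 2$ rather than $6\cdot 2^0$ functions after the redefinition — it is precisely this that changes the naive numerator $6^d$ into $12^d(1-x)^{d-1}$ and cancels the spurious simple pole at $x=1$, and getting it wrong shifts every constant. Second, simplifying the alternating binomial sum above for $d=4$ and $d=5$ without arithmetic slips requires some care; everything else is a one-line geometric-series or generating-function identity.
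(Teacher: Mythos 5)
Your treatment of the sparse count is correct and takes a genuinely different route from the paper. The paper evaluates the convolution sum $\sum_{j_1+\cdots+j_d\le k}\prod_{i}\#\mathcal{J}_{j_i}$ directly, dimension by dimension: it writes out the nested sums only for $d=2$, splits off the level-zero terms, sums the geometric series, and then states that a similar computation gives $d=3,4,5$. You instead encode the level cardinalities in the generating function $A(x)=12+\frac{12x}{1-2x}=\frac{12(1-x)}{1-2x}$, add the slack factor $\frac{1}{1-x}$, and read off $\#\Psi^k=12^d\,[x^k]\,\frac{(1-x)^{d-1}}{(1-2x)^d}$. This buys you a single closed expression $12^d\sum_{i=0}^{d-1}(-1)^i\binom{d-1}{i}\binom{k-i+d-1}{d-1}2^{k-i}$ valid for every $d$ (I checked that it reproduces the tabulated polynomials for $d=2,3,4$ and spot values for $d=5$), and it yields the $\mathcal{O}\left(2^kk^{d-1}\right)$ bound structurally, from the order-$d$ pole at $x=1/2$, rather than by inspecting the final polynomials. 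The convolution recursion $f_d(k)=\sum_j a_j f_{d-1}(k-j)$ you mention is essentially the paper's argument made systematic, so your generating-function route is a strict tidying-up of it; you do leave the $d=4,5$ simplifications as bookkeeping, which is acceptable since the method plainly produces them.

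The one place you were not entitled to wave your hands is the anisotropic count. With the convention you correctly adopt (and which the sparse formulas force), $a_0=\#\mathcal{J}_0=12$, the one-dimensional count is $M_k=12\cdot 2^k$, hence $\#\Psi^k_{ani}=M_k^{\,d}=12^d2^{dk}=6^d2^{d(k+1)}$, which is \emph{not} the stated $6^d2^{dk}$; your assertion that evaluating the geometric series ``gives the stated closed form'' is therefore false as written, and you should have flagged the factor $2^d$ rather than claiming agreement. (The tension originates in the paper: its proof asserts $\#\Psi^k_{ani}=6\cdot 2^k$ for $d=1$, which is inconsistent with the $\#\mathcal{J}_0=12$ it uses in the very next sentence; the stated anisotropic formula corresponds to a coarsest-level block of size $6$ rather than $12$.) Since you yourself stress at the end that recording $12$ rather than $6$ functions at the coarsest level ``shifts every constant,'' the same care was owed here: either derive $12^d2^{dk}$ and note the mismatch with the statement, or say explicitly under which convention $6^d2^{dk}$ is the right count. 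The asymptotic claim $\#\Psi^k_{ani}=\mathcal{O}\left(2^{dk}\right)$ is unaffected either way.
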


\begin{proof}
Clearly, $\# \Psi^k_{ani} = 6 \cdot 2^k$ for $d=1$ and $k \geq 1$, and thus $\# \Psi^k_{ani} = \left( 6 \cdot 2^k \right)^d = 6^d 2^{dk}$ for $d \geq 1$ and $k \geq 1$.

For $d=2$, 
\begin{equation}
\# \Psi^k = \sum\limits_{i=0}^k \sum\limits_{j=0}^{k-i}  \# \mathcal{J}_i \# \mathcal{J}_j =  \# \mathcal{J}_0 \# \mathcal{J}_0 + 2 \# \mathcal{J}_0 \sum_{j=1}^k \# \mathcal{J}_j + \sum\limits_{i=1}^{k-1} \sum\limits_{j=1}^{k-i}  \# \mathcal{J}_i \# \mathcal{J}_j.    
\end{equation}
Using $\# \mathcal{J}_0 = 12$, $\# \mathcal{J}_j = 6 \cdot 2^j$ for $j \geq 1$, and the formula for a sum of geometric series gives the relation for $\# \Psi^k$ for $d=2$.

A similar approach used for $d > 2$ gives the remaining assertions of the lemma.
\end{proof}

The resulting multidimensional functions preserve many properties of original univariate functions.

\begin{thm}
Under the above setting, the following statements hold:
\begin{itemize}
\item[a)] Functions $\psi_{\lambda}$ are bicubic splines belonging to $C^1 \left( \Omega \right)$.
\item[b)] The set $\Psi$ is an orthogonal basis of $L^2 \left( \Omega \right)$.
\item[c)] Functions $\psi_{\lambda}$ satisfy $\left\langle p, \psi_{\lambda} \right\rangle = 0$, where
$\lambda= \left( \lambda_1, \ldots, \lambda_d \right)$ such that
$\lambda_i = \left( j_i, k_i \right)$, $3 \leq k_i \leq 6 \cdot 2^{j_i} - 2$,
and $p$ is a polynomial of degree less than $4$, which means that inner wavelets have four vanishing moments.
\item[d)] Basis functions are local in the generalized sense that 
\begin{equation}
  {\rm diam} \, {\rm supp} \, \psi_{ \lambda } \leq C \sqrt{d} \, 2^{- \left[ \lambda  \right] }, \quad    \left[ \lambda \right]= \min_{i=1, \ldots, d} \left| \lambda_i \right|.
\end{equation}
\end{itemize}
\end{thm}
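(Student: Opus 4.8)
The plan is to prove each of the four assertions (a)–(d) by reducing the multidimensional statement to the one-dimensional results already established in Theorems~\ref{properties_psi_on_R} and~\ref{properties_psi_on_I}, exploiting the tensor-product structure $\psi_\lambda = \bigotimes_{i=1}^d \psi_{\lambda_i}$. For (a), each univariate factor $\psi_{\lambda_i}$ is a cubic spline in $C^1(0,1)$ by Theorem~\ref{properties_psi_on_I}~a), so the product is piecewise a tensor product of univariate cubic polynomials, hence a bicubic (more precisely, $d$-cubic) spline; smoothness $C^1(\Omega)$ follows because a partial derivative $\partial_{x_m}\psi_\lambda$ equals $\psi_{\lambda_m}'$ tensored with the remaining factors, and $\psi_{\lambda_m}' \in C^0$ while the other factors are $C^1$, so each first partial derivative is continuous.

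For (b), I would first record the product rule for $L^2$ inner products, $\langle \psi_\lambda, \psi_\mu\rangle_{L^2(\Omega)} = \prod_{i=1}^d \langle \psi_{\lambda_i}, \psi_{\mu_i}\rangle_{L^2(0,1)}$, which is immediate from Fubini. By Theorem~\ref{properties_psi_on_I}~b) each univariate factor equals $\delta_{\lambda_i,\mu_i}$, so the whole product is $1$ when $\lambda=\mu$ and $0$ otherwise, giving orthonormality. For completeness of the span, I would invoke that $\Psi^I$ is an orthonormal basis of $L^2(0,1)$ (Theorem~\ref{properties_psi_on_I}~d)) and that tensor products of orthonormal bases of $L^2(0,1)$ form an orthonormal basis of $L^2(\Omega) = L^2(0,1)^{\otimes d}$; since $\Psi$ (the full, non-sparse set) consists of all such tensor products indexed by the full $\mathcal{J}$, it is an orthonormal basis. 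Assertion (c) follows by choosing a monomial $p(x_1,\dots,x_d) = x_1^{a_1}\cdots x_d^{a_d}$ with $a_1+\dots+a_d < 4$; then $\langle p,\psi_\lambda\rangle = \prod_{i=1}^d \int_0^1 x_i^{a_i}\psi_{\lambda_i}(x_i)\,dx_i$, and for at least one index $i$ we have $a_i \le a_1+\dots+a_d < 4$, so $\int_0^1 x_i^{a_i}\psi_{\lambda_i} = 0$ by the four vanishing moments of the inner wavelet $\psi_{\lambda_i}$ (Theorem~\ref{properties_psi_on_I}~c)), forcing the product to vanish; linearity extends this to arbitrary polynomials of degree $<4$.

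For (d), I would write $\operatorname{supp}\psi_\lambda = \prod_{i=1}^d \operatorname{supp}\psi_{\lambda_i}$, a box whose $i$-th edge has length at most $2^{1-|\lambda_i|}$ by the locality in Theorem~\ref{properties_psi_on_I}~f); the Euclidean diameter of a box is the square root of the sum of squared edge lengths, which is at most $\sqrt{d}$ times the largest edge, namely $\sqrt{d}\,2^{1-[\lambda]}$ with $[\lambda] = \min_i |\lambda_i|$, absorbing the factor $2$ into $C$. The only mild subtlety throughout is bookkeeping around the coarsest level, where after the redefinition $\mathcal{J}_0 = \{-5,\dots,6\}$ the symbols $\psi_{0,k}$ stand for scaling functions rather than wavelets; I would note that the relevant one-dimensional properties (orthonormality, locality, and — for the inner scaling functions — membership in $C^1$) are exactly the scaling-function statements of Theorem~\ref{properties_psi_on_I}~a), b), f), so the arguments go through uniformly, with the vanishing-moment claim in (c) restricted, as stated, to inner wavelet factors ($3 \le k_i \le 6\cdot 2^{j_i}-2$). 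I do not anticipate a genuine obstacle here; the work is entirely the routine propagation of tensor-product identities, and the closest thing to a ``hard part'' is stating (b) carefully enough that ``tensor product of orthonormal bases is an orthonormal basis of the Hilbert-space tensor product'' is invoked cleanly rather than re-derived.
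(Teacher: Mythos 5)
Your proposal is correct and follows essentially the same route as the paper: the paper likewise deduces a)--c) directly from the tensor-product definition of $\psi_{\lambda}$ and the univariate properties in Theorem~\ref{properties_psi_on_I}, and proves d) by exactly your estimate $\mathrm{diam}\,\mathrm{supp}\,\psi_{\lambda} \leq \bigl( \sum_{i=1}^d C_i^2\, 2^{-2\left|\lambda_i\right|} \bigr)^{1/2} \leq C\sqrt{d}\,2^{-\left[\lambda\right]}$. Your version merely spells out the Fubini factorization, the completeness of tensor products of orthonormal bases, and the level-zero bookkeeping, which the paper leaves implicit.
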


\begin{proof}
The properties a), b), and c) follow directly from a definition of $\psi_{\lambda}$ and properties of the corresponding univariate functions presented in Theorem~\ref{properties_psi_on_I}.

Property d) follows from
\begin{equation} \label{delka_nosice_nD}
 {\rm diam} \, {\rm supp} \, \psi_{ \lambda } \leq \sqrt{ \sum_{i=1}^d C_i^2 2^{-2 \left| \lambda_i \right| } }\leq C \sqrt{d} 2^{- \left[ \lambda  \right] }, \quad C=\max \limits_{i=1, \ldots, d} C_i.
 \end{equation}
\end{proof}

Hence, conditions $A1)-A5)$ are fulfilled in a generalized sense. The next theorem studies condition $A6)$.

\begin{thm}
The set $\left\{ \psi_{\lambda} / \left\| \psi_{\lambda} \right\|_{ 1 }, \psi_{\lambda} \in \Psi \right\}$ is a Riesz basis of
$H_0^1 \left( \Omega \right)$. 
\end{thm}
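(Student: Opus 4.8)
The plan is to deduce the $d$-dimensional statement from the one-dimensional one (Theorem~\ref{basis_Hs}) by the standard tensor-product argument, using the $L^2$-orthonormality of the univariate system to decouple the coordinate directions. Observe first that the family $\Psi$ considered here is the \emph{full} tensor product over the infinite index set $\mathcal{J}$ (the sparse sets $\Psi^k$ in (\ref{Psi_k_sparse}) are merely finite sections of it), so the claim is precisely that the $H^1$-normalized full tensor-product wavelet family is a Riesz basis of $H_0^1\left(\Omega\right)$, and the classical tensor-product Riesz-basis machinery applies directly.

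First I would record the univariate facts. By Theorem~\ref{properties_psi_on_I}\,(b), the one-dimensional functions are $L^2$-orthonormal, and since, apart from finitely many boundary cases, they are dyadic dilates of a fixed finite set of $L^2$-normalized generators, one has $\left|\psi_\lambda\right|_{H^1(I)}\asymp 2^{\left|\lambda\right|}$ and hence, with the Poincar\'e inequality on $I$, $\left\|\psi_\lambda\right\|_{1}^2\asymp 1+4^{\left|\lambda\right|}\asymp 4^{\left|\lambda\right|}$ (reading $2^0=1$ on the coarsest level), with constants independent of the level. Theorem~\ref{basis_Hs} then says exactly that for $v=\sum_\lambda v_\lambda\psi_\lambda$ one has the norm equivalences
\[
\left\|v\right\|_{L^2(I)}^2=\sum_\lambda \left|v_\lambda\right|^2,\qquad
\left|v\right|_{H^1(I)}^2\asymp\left\|v\right\|_{1}^2\asymp\sum_\lambda 4^{\left|\lambda\right|}\left|v_\lambda\right|^2 .
\]
Next, for $u=\sum_{\lambda\in\mathcal{J}}c_\lambda\psi_\lambda$ with $\psi_\lambda=\bigotimes_{i=1}^{d}\psi_{\lambda_i}$, I would use $\left\|u\right\|_{1}^2=\left\|u\right\|^2+\sum_{i=1}^{d}\left\|\partial_i u\right\|^2$. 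Orthonormality in each variable gives $\left\|u\right\|^2=\sum_\lambda\left|c_\lambda\right|^2$. For the $i$-th derivative, group the expansion by the multi-index $\lambda_{-i}:=\left(\lambda_k\right)_{k\neq i}$: since the products $\bigotimes_{k\neq i}\psi_{\mu_k}$ form an orthonormal system in $L^2$ of the remaining $d-1$ variables, Fubini yields
\[
\left\|\partial_i u\right\|^2=\sum_{\lambda_{-i}}\Bigl|\textstyle\sum_{\lambda_i}c_{(\lambda_i,\lambda_{-i})}\,\psi_{\lambda_i}\Bigr|_{H^1(I)}^2
\asymp\sum_{\lambda_{-i}}\sum_{\lambda_i}4^{\left|\lambda_i\right|}\bigl|c_{(\lambda_i,\lambda_{-i})}\bigr|^2
=\sum_{\lambda}4^{\left|\lambda_i\right|}\left|c_\lambda\right|^2 ,
\]
where the middle estimate is the one-dimensional $H^1$-seminorm equivalence from the previous display applied in the variable $x_i$ (first on finite combinations, then extended by density). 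Summing over $i$, adding the $L^2$ term, and using that each summand of $1+\sum_i 4^{\left|\lambda_i\right|}$ is at least $1$, so $1+\sum_i 4^{\left|\lambda_i\right|}\asymp\sum_i 4^{\left|\lambda_i\right|}$, gives
\[
\left\|u\right\|_{1}^2\asymp\sum_{\lambda\in\mathcal{J}}\Bigl(\sum_{i=1}^{d}4^{\left|\lambda_i\right|}\Bigr)\left|c_\lambda\right|^2 ,
\]
with all constants built only from the fixed one-dimensional ones, hence independent of $d$.

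To conclude, note that $\left\|\psi_\lambda\right\|_{1}^2=1+\sum_{i=1}^{d}\left|\psi_{\lambda_i}\right|_{H^1(I)}^2\asymp\sum_{i=1}^{d}4^{\left|\lambda_i\right|}$; therefore, writing $u=\sum_\lambda b_\lambda\,\psi_\lambda/\left\|\psi_\lambda\right\|_{1}$ with $b_\lambda=c_\lambda\left\|\psi_\lambda\right\|_{1}$,
\[
\left\|u\right\|_{1}^2\asymp\sum_{\lambda\in\mathcal{J}}\left\|\psi_\lambda\right\|_{1}^2\left|c_\lambda\right|^2=\sum_{\lambda\in\mathcal{J}}\left|b_\lambda\right|^2 ,
\]
which are the Riesz bounds (\ref{riesz}) for the $H^1$-normalized family (and, by the preceding paragraph, these bounds do not depend on $d$). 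Finally I would check denseness in $H_0^1\left(\Omega\right)$: each $\psi_{\lambda_i}$ vanishes at the endpoints of $I$ (boundary generators by construction, inner ones because of their support), so $\psi_\lambda\in H_0^1\left(\Omega\right)$; conversely, products $f_1\otimes\cdots\otimes f_d$ with $f_i\in C_0^\infty(I)$ are dense in $H_0^1\left(\Omega\right)$, each $f_i$ is an $H_0^1(I)$-limit of finite wavelet combinations by Theorem~\ref{basis_Hs}, and the tensor product is a bounded multilinear map $H_0^1(I)^d\to H_0^1\left(\Omega\right)$, so the closed span of $\Psi$ in $H_0^1\left(\Omega\right)$ is all of $H_0^1\left(\Omega\right)$.

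The main obstacle is the coordinate-wise estimate in the second displayed equation above: one must justify the two-step reduction "sum over the non-$i$ indices first by Parseval, then apply one-dimensional $H^1$-stability in the $i$-th index," which relies on (i) the univariate wavelets being genuinely $L^2$-orthonormal so the non-$i$ factors contribute with constant $1$, and (ii) the $H^1$-seminorm on $H_0^1(I)$ being equivalent to the full $H^1$-norm so that Theorem~\ref{basis_Hs} is applicable to the $i$-th factor uniformly in the coefficients coming from the other directions. Both ingredients are at hand from Theorems~\ref{properties_psi_on_I} and \ref{basis_Hs}, and the rest is bookkeeping. As an alternative to carrying out the three steps explicitly, one may invoke the general theory of Riesz bases of tensor-product and sparse-grid spaces; see \cite{Griebel1999,Petersdorff2004,Reich2008}.
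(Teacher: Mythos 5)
Your proposal is correct and takes essentially the same route as the paper: the paper's proof consists of writing $H_0^1\left( \Omega \right) = \bigcap_{i=1}^d S^i\left( \Omega \right)$ as in (\ref{form_space_H_0^1}) and invoking the univariate Theorem~\ref{basis_Hs} together with the cited tensor-product result of Griebel--Oswald, and your argument simply unpacks this intersection structure explicitly via $\left\| u \right\|_1^2 = \left\| u \right\|^2 + \sum_i \left\| \partial_i u \right\|^2$, $L^2$-orthonormality in the off-directions, and the one-dimensional Riesz bounds in direction $i$. In other words, you supply by hand the coordinate-wise bookkeeping (and the density check) that the paper delegates to the reference, so there is no substantive difference in approach.
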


\begin{proof}
The space $H_0^1 \left( \Omega \right)$ can be written as
\begin{equation} \label{form_space_H_0^1}
H_0^1 \left( \Omega \right) =  \bigcap\limits_{i=1}^d S^i \left( \Omega \right),
\end{equation}
where
\begin{equation}
S^i \left( \Omega \right) =  L^2  \left( I_1 \right) \otimes \ldots \otimes L^2 \left( I_{i-1} \right) \otimes H_0^1 \left( I_i \right) \otimes L^2 \left( I_{i+1} \right) \otimes \ldots L^2  \left( I_d \right),
\end{equation}
see \cite{Griebel1995}. Thus, Theorem~\ref{basis_Hs} and (\ref{form_space_H_0^1}) yield the required result.
\end{proof}

Because $\Psi$ is an $L^2$--orthogonal basis of $L^2 \left( \Omega \right)$, any function $u \in L^2 \left( \Omega \right)$ has a series representation
\begin{equation}
    u = \sum_{ \lambda \in \mathcal{J} } c_{ \lambda } \psi_{ \lambda }.
\end{equation}
Truncating this expansion, we obtain a projection operator $P_k$,
\begin{equation}
  P_k \, u = \sum_{ \psi_{\lambda} \in \Psi^k }  c_{ \lambda } \psi_{ \lambda }, \quad u \in  L^2 \left( \Omega \right).
\end{equation}

The approximation properties of wavelet sparse grids have been studied in \cite{Griebel1999,Hilber2013,Petersdorff2004,Reich2008}. For the proof of the next result, refer therein. 

\begin{thm} \label{thm_approx_sparse}
Let $\Psi^k$ given by (\ref{Psi_k_sparse}) be set of function constructed by the tensor product of univariate spline-wavelet basis of order $p$ satisfying $A1)-A5)$. Then, for $u \in H^p \left( \Omega \right)$ there exists a constant $C$ independent of level $k$ such that
\begin{equation} \label{sparse_approx}
    \left\| u - P_k u \right\| \leq C 2^{-pk} k^{ \frac{d-1}{2} }
    \left\| u \right\|_p.
\end{equation}
\end{thm}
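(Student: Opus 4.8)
The plan is to combine the $L^2$-orthogonality $A1)$ with the tensor-product structure so that the multivariate estimate splits into a one-dimensional Jackson inequality and an elementary summation over the sparse index set. First I would introduce, for each coordinate direction $i$, the $L^2\left(0,1\right)$-orthogonal projector $P^{(i)}_j$ onto $V_j$ and the detail projectors $Q^{(i)}_j$ onto the level-$j$ complement spanned by $\left\{ \psi_{j,k} \right\}_{k \in \mathcal{J}_j}$; by $A1)$ and $A4)$ these are mutually orthogonal projectors with $\sum_{j \geq 0} Q^{(i)}_j = \mathrm{id}$ on $L^2\left(0,1\right)$. Setting $Q_{\mathbf{j}} = Q^{(1)}_{j_1} \otimes \cdots \otimes Q^{(d)}_{j_d}$ for $\mathbf{j} = \left( j_1, \ldots, j_d \right) \in \mathbb{N}_0^d$, one obtains the $L^2\left(\Omega\right)$-orthogonal decomposition $u = \sum_{\mathbf{j}} Q_{\mathbf{j}} u$, and by the construction of $\Psi^k$ in \eqref{Psi_k_sparse} the operator $P_k$ is precisely $\sum_{|\mathbf{j}|_1 \leq k} Q_{\mathbf{j}}$, where $|\mathbf{j}|_1 = \sum_{i=1}^{d} j_i$. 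Hence, by orthogonality,
\begin{equation}
\left\| u - P_k u \right\|^2 = \sum_{|\mathbf{j}|_1 > k} \left\| Q_{\mathbf{j}} u \right\|^2 .
\end{equation}

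Second, I would establish the one-dimensional estimate $\left\| Q^{(i)}_j v \right\|_{L^2\left(0,1\right)} \leq C\, 2^{-pj} \left| v \right|_{H^p\left(0,1\right)}$ with $C$ independent of $j$. Writing $Q^{(i)}_j$ as a difference of two consecutive $L^2$-projectors and invoking the Jackson estimate for the spline spaces $V_j$ of order $p$ --- which holds by $A3)$, cf.\ \eqref{jackson} --- gives this bound (for $j=0$ it reads $\left\| Q^{(i)}_0 v \right\| \leq \left\| v \right\|$, which is consistent with the stated pattern). Applying this bound coordinate by coordinate --- a Fubini-type argument for tensor products of operators bounded $H^p\left(0,1\right) \to L^2\left(0,1\right)$ --- yields
\begin{equation}
\left\| Q_{\mathbf{j}} u \right\|_{L^2\left(\Omega\right)} \leq C^d\, 2^{-p|\mathbf{j}|_1}\, \left\| u \right\|_p ,
\end{equation}
where, as in \cite{Griebel1999,Hilber2013,Petersdorff2004,Reich2008}, the right-hand side norm is read as the norm of dominating mixed smoothness, $\left\| u \right\|_p \sim \big\| \partial_{x_1}^{p} \cdots \partial_{x_d}^{p} u \big\|_{L^2\left(\Omega\right)}$.

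Third, I would carry out the summation. Using $\#\left\{ \mathbf{j} \in \mathbb{N}_0^d : |\mathbf{j}|_1 = \ell \right\} = \binom{\ell + d - 1}{d-1} \leq C_d \left( \ell + 1 \right)^{d-1}$ and the fact that a geometric series weighted by a fixed power is, up to a constant, dominated by its leading term, we obtain
\begin{equation}
\left\| u - P_k u \right\|^2 \leq C^{2d} \left\| u \right\|_p^2 \sum_{\ell > k} 2^{-2p\ell} \binom{\ell+d-1}{d-1} \leq \tilde{C}\, \left\| u \right\|_p^2\, 2^{-2pk}\, k^{d-1}
\end{equation}
for $k \geq 1$. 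Taking square roots gives \eqref{sparse_approx}.

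The main obstacle is the second step: proving the level-uniform $H^p \to L^2$ bound for the detail projectors and, above all, making the tensorization rigorous --- i.e.\ controlling $\left\| Q_{\mathbf{j}} u \right\|$ by the product of the one-dimensional bounds on a mixed-smoothness space --- together with clarifying precisely which norm must appear on the right-hand side, since the sharp rate requires dominating mixed smoothness rather than the isotropic $H^p\left(\Omega\right)$ regularity. The first and third steps, namely the orthogonal hierarchical decomposition and the counting of the sparse multi-index set, are routine.
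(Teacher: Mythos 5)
The paper does not actually prove Theorem~\ref{thm_approx_sparse}: it only points to \cite{Griebel1999,Hilber2013,Petersdorff2004,Reich2008}. Your sketch reconstructs essentially the argument used in those references, so in that sense you supply a proof where the paper supplies a citation. The structure is sound: the $L^2$-orthogonal splitting $u=\sum_{\mathbf{j}}Q_{\mathbf{j}}u$ with $P_k=\sum_{|\mathbf{j}|_1\le k}Q_{\mathbf{j}}$ is correct by $A1)$ and $A4)$, provided $Q^{(i)}_0$ is understood to project onto the span of the level-zero scaling functions \emph{and} level-zero wavelets (consistent with the paper's reindexing $\psi_{0,k-6}=\phi_{0,k}$); the level-uniform univariate bound $\left\| Q^{(i)}_j v\right\|\le C\,2^{-pj}\left\| v\right\|_{H^p}$ follows from the Jackson estimate \eqref{jackson} applied to two consecutive $L^2$-orthogonal projectors; and the counting of $\left\{\mathbf{j}:|\mathbf{j}|_1=\ell\right\}$ together with the geometric tail indeed yields $2^{-2pk}k^{d-1}$ before taking square roots.

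The one substantive issue is the one you flag yourself: the norm on the right-hand side. The tensorized estimate $\left\|Q_{\mathbf{j}}u\right\|\le C^d\,2^{-p|\mathbf{j}|_1}\bigl\|\partial_{x_1}^{p}\cdots\partial_{x_d}^{p}u\bigr\|$ needs dominating mixed smoothness, and $H^p\left(\Omega\right)\not\subset H^p_{\mathrm{mix}}\left(\Omega\right)$ for $d\ge 2$, so your argument proves the bound with $\left\|u\right\|_p$ read as the mixed norm --- which is exactly what the cited references establish --- and not with the isotropic Sobolev norm as defined in Section~3 of the paper; with the isotropic norm the claimed rate does not follow from this argument. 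A milder caveat: the univariate Jackson estimate for the boundary-adapted spaces holds for functions compatible with the homogeneous Dirichlet conditions (or at least with the full norm rather than the seminorm), so the regularity hypothesis on $u$ must be interpreted accordingly. Neither point is a defect of your reasoning relative to the paper --- the paper's statement carries the same imprecision --- but both would have to be addressed (by restating the theorem with the mixed-smoothness norm and compatible boundary behaviour, or by weakening the rate) before your sketch is a complete proof of the statement as literally written.
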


Hence, the approximation order is $\mathcal{O} \left( N^{-p} \right)$, if dimension $d=1$. Using Lemma~\ref{lemma_number_sparse} and (\ref{sparse_approx}), we may conclude that, as mentioned above, the approximation rate for $d$-dimensional functions is indeed, up to the log term, the same as in one dimension, and the curse of dimensionality is overcome. 

\medskip
\section{ Wavelet Method }
\label{Section_method}

This section examines a wavelet-based method for solving equation~(\ref{eq_bounded_domain_2}) equipped with conditions (\ref{init_condition}) and (\ref{boundary_conditions}). The method is based on the Galerkin method using sparse tensor product spaces of orthogonal spline wavelets, such as cubic spline wavelets from Section~\ref{sparse_tensor_product_wavelets}, and the Crank--Nicolson scheme with Rannacher time stepping.

For $u,v \in H_0^1 \left( \Omega \right)$, the bilinear form $a: H_0^1 \left( \Omega \right) \times H_0^1 \left( \Omega \right) \rightarrow \mathbb{R}$ corresponding to the differential operator $\mathcal{D}$ given by (\ref{definition_operator_D}) is defined as 
\begin{equation} \label{definition_bilinear_form_a}
a\left( u, v \right) = \sum_{i=1}^d \sum_{j=1}^d P_{i,j} \left\langle \frac{ \partial u }{\partial x_i}, \frac{ \partial v }{\partial x_j} \right\rangle +  r \left\langle u, v \right\rangle.
\end{equation}

\begin{lem} \label{Lemma_Garding} 
The bilinear form $a$ defined by (\ref{definition_bilinear_form_a}) is continuous and coercive; that is, there exists a constant $\beta$ such that
\begin{equation} \label{def_continuity}
  \left|  a \left( u, v \right) \right| \leq \beta \left\| u \right\|_{ 1 } \left\| v \right\|_{ 1 } \quad \forall \, u, v \in H_0^1 \left( \Omega \right),
\end{equation}
and a constant $\alpha > 0$ such that
\begin{equation} \label{coercivity}
    a \left( u, u \right) \geq \alpha \left\| u \right\|^2_{1} \quad \forall \, u \in H_0^1 \left( \Omega \right).
\end{equation}
Specifically, the form $a$ satisfies the G\aa rding inequality; that is, there exist constants $C_1, C_2 >0$ such that, for any $v \in H_0^1 \left( \Omega \right)$,
\begin{equation} \label{garding}
a \left( v, v \right) \geq C_1 \left\| v \right\|_{1}^2 - C_2  
\left\| v \right\|^2.
\end{equation}
\end{lem}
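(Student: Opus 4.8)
The plan is to reduce everything to spectral properties of the matrix $\mathbf{P} = \left( P_{i,j} \right)_{i,j=1}^d$. Since $P_{i,j} = \mathbf{Q}_{i,j} / \left( 2 d_i d_j \right)$, we have $\mathbf{P} = \tfrac{1}{2} \mathbf{D} \mathbf{Q} \mathbf{D}$ with $\mathbf{D} = \mathrm{diag}\left( 1/d_1, \ldots, 1/d_d \right)$, and because $\mathbf{Q}$ is a (nondegenerate) covariance matrix it is symmetric positive definite; congruence then makes $\mathbf{P}$ symmetric positive definite as well, say with eigenvalues $0 < \mu \le \cdots \le M$. The one useful rewriting is that the principal part of $a$ is a pointwise quadratic form in the gradient: for $u, v \in H_0^1\left( \Omega \right)$,
\[
\sum_{i=1}^d \sum_{j=1}^d P_{i,j} \left\langle \frac{\partial u}{\partial x_i}, \frac{\partial v}{\partial x_j} \right\rangle = \int_\Omega \left( \nabla u \right)^{\!\top} \mathbf{P} \, \nabla v \, \mathrm{d}x .
\]

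For the continuity bound (\ref{def_continuity}) I would estimate the integrand pointwise by $\left| \left( \nabla u \right)^{\!\top} \mathbf{P} \nabla v \right| \le M \left| \nabla u \right| \left| \nabla v \right|$ (spectral bound plus the Cauchy--Schwarz inequality in $\mathbb{R}^d$), integrate, and apply Cauchy--Schwarz in $L^2\left( \Omega \right)$ to reach $M \left| u \right|_1 \left| v \right|_1$; combined with $r \left| \left\langle u, v \right\rangle \right| \le \left| r \right| \left\| u \right\| \left\| v \right\|$ and $\left| u \right|_1, \left\| u \right\| \le \left\| u \right\|_1$, this gives (\ref{def_continuity}) with $\beta = M + \left| r \right|$. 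For coercivity (\ref{coercivity}) the lower spectral bound yields $\left( \nabla u \right)^{\!\top} \mathbf{P} \nabla u \ge \mu \left| \nabla u \right|^2$ pointwise, hence $a\left( u, u \right) \ge \mu \left| u \right|_1^2 + r \left\| u \right\|^2$. If $r > 0$ this already exceeds $\min\left( \mu, r \right) \left\| u \right\|_1^2$; if $r = 0$ (the interest rate $r$ being nonnegative) I would absorb the $\left\| u \right\|^2$ part of $\left\| u \right\|_1^2$ using the Friedrichs inequality $\left\| u \right\| \le c_F \left| u \right|_1$, valid on the bounded domain for $u \in H_0^1\left( \Omega \right)$, obtaining $a\left( u, u \right) \ge \tfrac{\mu}{1 + c_F^2} \left\| u \right\|_1^2$. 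Either way (\ref{coercivity}) holds. Finally (\ref{garding}) is essentially free: from $a\left( u, u \right) \ge \mu \left| u \right|_1^2 + r \left\| u \right\|^2 = \mu \left\| u \right\|_1^2 - \left( \mu - r \right) \left\| u \right\|^2$ one may take $C_1 = \mu$ and $C_2 = \mu + \left| r \right|$, or simply invoke (\ref{coercivity}) with $C_1 = \alpha$, $C_2 = 1$.

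The only genuine subtlety — the main obstacle — is that full $H^1$-coercivity, unlike the Gårding inequality, requires $\mathbf{P}$, equivalently the instantaneous correlation matrix of the Brownian motions $W_i$, to be positive definite rather than merely positive semidefinite; this nondegeneracy (no asset's log-return is an a.s.\ linear combination of the others) has to be recorded as a standing assumption on the model. Everything else is a routine combination of the spectral bounds on $\mathbf{P}$, the Cauchy--Schwarz inequality, and — for the borderline case $r = 0$ — the Friedrichs inequality on $H_0^1\left( \Omega \right)$.
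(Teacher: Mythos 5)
Your proposal is correct and follows essentially the same route as the paper: continuity via the Cauchy--Schwarz inequality, and coercivity from the positive definiteness of $\mathbf{P}=\tfrac{1}{2}\mathbf{D}\mathbf{Q}\mathbf{D}$ obtained by congruence with the covariance matrix $\mathbf{Q}$, with the G\aa rding inequality as an immediate consequence. You are in fact slightly more careful than the paper, since you make explicit the spectral bounds, the Friedrichs inequality needed to pass from $\mu\left|u\right|_1^2 + r\left\|u\right\|^2$ to the full $H^1$-norm when $r=0$, and the standing nondegeneracy assumption on $\mathbf{Q}$ that the paper leaves implicit.
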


\begin{proof}
The continuity of the bilinear form $a$ is a consequence of definition (\ref{definition_bilinear_form_a}) and the Cauchy--Schwarz inequality. The coefficient matrix for the second order terms of differential operator $\mathcal{D}$ is $\mathbf{D} \mathbf{Q} \mathbf{D} / 2$, where $\mathbf{D}$ is a diagonal matrix with entries $D_{i,i}= 1/ d_i$, $d_i = x_i^{max}-x_i^{min}$, and $\mathbf{Q}$ is a matrix defined in Section~\ref{Section_model}. Because $\mathbf{Q}$ is a variational matrix, it is positive definite, and thus for any $0 \neq \mathbf{x} \in \mathbb{R}^d$,
\begin{equation}
\mathbf{x}^T \mathbf{D} \mathbf{Q} \mathbf{D} \mathbf{x} =
\left( \mathbf{D} \mathbf{x} \right)^T \mathbf{Q} \left( \mathbf{D} \mathbf{x} \right) > 0.
\end{equation}
This implies that $\mathbf{D} \mathbf{Q} \mathbf{D} / 2$ is also positive definite, operator $\mathcal{D}$ is elliptic, and the bilinear form $a$ is coercive.  The coercivity property (\ref{coercivity}) implies (\ref{garding}) with $C_2 = 0$.
\end{proof}

Let $L^2 \left( 0, T; X \right)$ and $H^1 \left( 0, T; X \right)$ denote Bochner spaces and $H^{-1} \left( \Omega \right)$ denote the dual space of $H_0^1 \left( \Omega \right)$. Let $u_0 \in L^2 \left( \Omega \right)$ be the transformed payoff function. The variational formulation of the equation (\ref{eq_bounded_domain_2}) reads as:  Find $u \in L^2 \left( 0, T; H_0^1 \left( \Omega \right) \right) \cap H^1 \left( 0, T; H^{-1} \left( \Omega \right) \right)$ such that
\begin{equation} \label{variational_formulation_1}
\left\langle \frac{\partial U }{\partial t }, v \right\rangle
+ a \left( U, v \right) = 0, \quad \forall v \in V, \,  \text{a.e. in} \left( 0, T \right); \, U \left( \cdot, 0 \right) = u_0.
\end{equation} 

\begin{thm}
There exists a unique solution $u$ to the problem~(\ref{variational_formulation_1}). 
\end{thm}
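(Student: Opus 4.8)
The plan is to recognize (\ref{variational_formulation_1}) as an abstract linear parabolic evolution equation posed on the Gelfand triple $H_0^1(\Omega) \hookrightarrow L^2(\Omega) \hookrightarrow H^{-1}(\Omega)$, and to invoke the classical well-posedness theory for such problems (Lions' theorem; see, e.g., Dautray--Lions, Wloka, or Evans). All the structural hypotheses have already been verified: Lemma~\ref{Lemma_Garding} shows that the time-independent bilinear form $a$ is bounded and coercive on $H_0^1(\Omega) \times H_0^1(\Omega)$, i.e., it satisfies the G\aa rding inequality (\ref{garding}) with $C_2 = 0$, while the initial datum $u_0$ belongs to $L^2(\Omega)$. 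Under these assumptions the abstract theory delivers a unique $u \in L^2(0,T;H_0^1(\Omega)) \cap H^1(0,T;H^{-1}(\Omega))$ solving (\ref{variational_formulation_1}); it moreover lies in $C([0,T];L^2(\Omega))$, so that the initial condition $u(\cdot,0)=u_0$ is attained in $L^2(\Omega)$.

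For a self-contained argument I would run the Faedo--Galerkin scheme using precisely the sparse tensor product spaces $\mathrm{span}\,\Psi^k$ from Section~\ref{sparse_tensor_product_wavelets}. First, for fixed $k$ seek $u_k(t) \in \mathrm{span}\,\Psi^k$ satisfying the variational identity tested against every $\psi_\lambda \in \Psi^k$, with $u_k(0)$ the $L^2$-orthogonal projection of $u_0$; since $\Psi^k$ is $L^2$-orthonormal this is a linear ODE system with the identity as mass matrix, hence uniquely solvable on $[0,T]$. Second, derive the energy estimate by testing with $v = u_k(t)$ and using coercivity (\ref{coercivity}),
\begin{equation}
\frac{1}{2}\frac{\mathrm{d}}{\mathrm{d}t}\|u_k(t)\|^2 + \alpha \|u_k(t)\|_1^2 \leq 0,
\end{equation}
so that $\|u_k(t)\| \leq \|u_0\|$ for all $t$ and $\{u_k\}$ is bounded in $L^2(0,T;H_0^1(\Omega))$ uniformly in $k$; the variational identity together with stability of the $L^2$-projection in the $H^1$-seminorm (a consequence of the Riesz basis property established in Section~\ref{Section_construction}) then bounds $\{\partial_t u_k\}$ in $L^2(0,T;H^{-1}(\Omega))$. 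Third, pass to the limit along a weakly convergent subsequence (Banach--Alaoglu) in the variational identity, exploiting the density of $\bigcup_k \mathrm{span}\,\Psi^k$ in $H_0^1(\Omega)$; the Aubin--Lions lemma yields strong convergence in $L^2(0,T;L^2(\Omega))$, which suffices to identify the limit as a solution and to recover the initial condition.

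Uniqueness is the easy part: if $u$ and $\tilde u$ both solve (\ref{variational_formulation_1}), then $w = u - \tilde u$ solves the same equation with $w(\cdot,0)=0$; testing with $w$, using (\ref{coercivity}) and integrating in time gives $\|w(t)\|^2 + 2\alpha \int_0^t \|w(\tau)\|_1^2 \, \mathrm{d}\tau \leq 0$, hence $w \equiv 0$. (If only the general G\aa rding inequality with $C_2 > 0$ were available one would instead keep the $C_2 \|w\|^2$ term and close the estimate by Gr\"onwall's inequality; coercivity makes this superfluous here.) I do not anticipate a genuine obstacle; the only point requiring care is the identity $\langle \partial_t w, w \rangle = \tfrac{1}{2}\tfrac{\mathrm{d}}{\mathrm{d}t}\|w\|^2$, valid in the solution class $L^2(0,T;H_0^1(\Omega)) \cap H^1(0,T;H^{-1}(\Omega))$ by the Lions--Magenes lemma, which I would simply quote.
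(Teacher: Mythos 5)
Your argument is correct and follows essentially the same route as the paper: the paper's proof simply notes that existence and uniqueness follow from the continuity and G\aa rding inequality established in Lemma~\ref{Lemma_Garding} and cites the standard parabolic theory in \cite{Achdou2005, Hilber2013, Kestler2013}, which is exactly your first paragraph. Your additional Faedo--Galerkin sketch and explicit uniqueness estimate are a more self-contained elaboration of the same standard machinery, not a different approach.
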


\begin{proof}
It is well-known that the existence and uniqueness of the solution follows directly from the G\aa rding inequality and continuity of the bilinear form~$a$ shown in Lemma~\ref{Lemma_Garding}, as detailed \cite{Achdou2005, Hilber2013, Kestler2013}.
\end{proof}

Let $\Psi$ be an orthonormal wavelet basis for the space $L^2 \left( \Omega \right)$ constructed using the tensor product from a univariate wavelet basis satisfying assumptions $A1)-A6)$, and consider its subset 
\begin{equation}
\Psi^k = \left\{ \psi_{\lambda} \in \Psi : \lambda \in \mathcal{J}^k \right\}, \quad  \mathcal{J}^k = \left\{ \lambda: \left\| \lambda \right\|_1 \leq k \right\}, 
\end{equation}
where $\left\| \cdot \right\|_1$ is the $l^1$ norm. This means that the sets $\Psi^k$ generate sparse tensor product spaces similarly as in Section~\ref{sparse_tensor_product_wavelets}. Let $X_k = {\rm span} \, \Psi^k$ and denote the dual of $X_k$ as~$X_k'$. 

The wavelet-Galerkin method consists of finding $U^k \in L^2 \left( 0, T; X_k \right) \cap H^1 \left( 0, T; X_k' \right)$ such that for all $v_k \in X_k$ and a.e. in $\left(0, T \right)$,
\begin{equation} \label{variational_formulation}
\left\langle \frac{\partial U_k}{ \partial t}, v_k \right\rangle
+ a \left( U_k, v_k \right) = 0,
\quad U_k \left( \cdot, 0 \right) = u_0. 
\end{equation} 

\begin{thm} 
There exists a unique solution to the problem~(\ref{variational_formulation}).
\end{thm}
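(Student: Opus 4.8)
The plan is to reduce the problem to a finite-dimensional system of ordinary differential equations and invoke the classical existence-uniqueness theory. First I would fix a basis enumeration of $\Psi^k = \{ \psi_{\lambda} : \lambda \in \mathcal{J}^k \}$, say $\psi_1, \ldots, \psi_M$ with $M = \# \Psi^k < \infty$, and write the sought solution as $U_k(\cdot, t) = \sum_{m=1}^M \alpha_m(t) \psi_m$ with unknown coefficient functions $\alpha_m : (0,T) \to \mathbb{R}$. Substituting this ansatz into (\ref{variational_formulation}) and testing against each basis function $v_k = \psi_n$, $n = 1, \ldots, M$, yields the linear ODE system
\begin{equation}
\mathbf{M} \, \dot{\boldsymbol{\alpha}}(t) + \mathbf{A} \, \boldsymbol{\alpha}(t) = \mathbf{0}, \quad \boldsymbol{\alpha}(0) = \boldsymbol{\alpha}_0,
\end{equation}
where $\mathbf{M}_{n,m} = \langle \psi_m, \psi_n \rangle$ is the mass matrix, $\mathbf{A}_{n,m} = a(\psi_m, \psi_n)$ is the stiffness matrix, and $\boldsymbol{\alpha}_0$ collects the coefficients of the $L^2$-orthogonal projection of $u_0$ onto $X_k$.

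The key observation is that, because $\Psi$ is an $L^2$-orthonormal basis (assumption $A1)$, preserved by the tensor product as established in Theorem~\ref{properties_psi_on_I} and its multidimensional analogue), the mass matrix $\mathbf{M}$ is simply the identity matrix. Hence the system is already in explicit normal form,
\begin{equation}
\dot{\boldsymbol{\alpha}}(t) = - \mathbf{A} \, \boldsymbol{\alpha}(t), \quad \boldsymbol{\alpha}(0) = \boldsymbol{\alpha}_0,
\end{equation}
with constant coefficient matrix $\mathbf{A}$. By the Picard--Lindelöf theorem (or, equivalently, the explicit formula $\boldsymbol{\alpha}(t) = e^{-t\mathbf{A}} \boldsymbol{\alpha}_0$), this initial value problem has a unique solution $\boldsymbol{\alpha} \in C^{\infty}([0,T]; \mathbb{R}^M)$. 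Setting $U_k(\cdot, t) = \sum_{m=1}^M \alpha_m(t) \psi_m$ then gives a function in $L^2(0,T; X_k) \cap H^1(0,T; X_k')$ — in fact much more regular in time — that satisfies (\ref{variational_formulation}), since testing against an arbitrary $v_k \in X_k$ reduces by linearity to testing against the finitely many $\psi_n$. Uniqueness at the level of the weak formulation follows because any solution $U_k$ of (\ref{variational_formulation}) has coefficient vector satisfying the same ODE system, and the continuity of $a$ on $H_0^1(\Omega)$ from Lemma~\ref{Lemma_Garding} guarantees that $\mathbf{A}$ is well-defined and finite, so the ODE uniqueness applies.

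The main point requiring care — and the only place where the structure of the problem enters beyond generic finite-dimensional ODE theory — is verifying that the weak formulation (\ref{variational_formulation}) is genuinely equivalent to the ODE system, i.e.\ that the Bochner-space regularity $U_k \in L^2(0,T; X_k) \cap H^1(0,T; X_k')$ is exactly captured by $\boldsymbol{\alpha} \in H^1(0,T; \mathbb{R}^M)$ and that the time derivative $\partial U_k / \partial t$ in $X_k'$ corresponds to $\dot{\boldsymbol{\alpha}}$; this is routine on a finite-dimensional space where all norms are equivalent, but it should be stated. I do not anticipate a genuine obstacle here: the $L^2$-orthogonality making $\mathbf{M} = \mathbf{I}$ removes even the mild nuisance of having to invert a mass matrix, and coercivity of $a$ from Lemma~\ref{Lemma_Garding} (which in turn gives positive definiteness of $\mathbf{A}$) is not even needed for mere existence and uniqueness — it would only be used later for stability estimates.
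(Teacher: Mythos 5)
Your proof is correct, but it takes a genuinely different route from the paper. The paper disposes of this theorem in one line by appealing to the same abstract parabolic theory used for the continuous problem: since $X_k \subset H_0^1\left( \Omega \right)$, the continuity and G\aa rding inequality of $a$ established in Lemma~\ref{Lemma_Garding} allow the standard existence--uniqueness result for parabolic variational problems (as in Achdou--Pironneau or Hilber et al.) to be applied verbatim on the subspace. You instead exploit the finite dimensionality of $X_k$ directly: expanding in the basis $\Psi^k$, using the $L^2$-orthonormality to see that the mass matrix is the identity, and solving the resulting constant-coefficient linear ODE system by the matrix exponential (Picard--Lindel\"of). Your argument is more elementary and self-contained, yields additional information (smoothness of the coefficients in time, hence more temporal regularity than the Bochner-space setting requires), and, as you correctly observe, needs neither coercivity nor the G\aa rding inequality; the paper's argument is shorter, treats the discrete and continuous problems uniformly, and does not rely on orthonormality or finite dimensionality of the trial space. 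Two small points you handle implicitly but should state: the identification of $H^1\left(0,T;X_k'\right)$ with $H^1$-regularity of the coefficient vector (you flag this; it is indeed routine in finite dimension, though the ODE is then satisfied only a.e., so uniqueness is by Gronwall or Carath\'eodory theory rather than the classical Picard--Lindel\"of statement), and the fact that the initial datum $u_0 \in L^2\left( \Omega \right)$ need not lie in $X_k$, so the initial condition in (\ref{variational_formulation}) must be read as the $L^2$-projection onto $X_k$ --- your choice of $\boldsymbol{\alpha}_0$ does exactly this, a point the paper itself glosses over.
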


\begin{proof}
This theorem also follows from the continuity of the bilinear form $a$ and the G\aa rding inequality shown in Lemma~\ref{Lemma_Garding}.
\end{proof}

Now, the Crank--Nicolson scheme and Rannacher time-stepping are applied. Let $M \in \mathbb{N}$ be a number of time steps, $\tau = T/M$ be the step size, $t_l=l \tau$, $l=0, \ldots, M$, be a time level and $U_k^l \left( x_1, \ldots, x_d \right) =  U_k \left( x_1, \ldots, x_d, t_l\right)$ be the solution at the $l$-th time level. The initial function $u_0$ representing the transformed payoff function is typically not smooth. It is known that using the Crank--Nicolson scheme with non-smooth initial function  can lead to spurious oscillations and reduction of the accuracy of the scheme. This can be avoided using  Rannacher time-stepping, which consists of replacing two Crank--Nicolson steps with four steps of the implicit Euler scheme with half step size \cite{Rannacher1984}.

The resulting scheme has the following form. For $l=0, 0.5, 1, 1.5$, use the implicit-Euler scheme with step $\tau / 2$, 
\begin{equation} \label{scheme_Rannacher}
\frac{ \left\langle U_k^{l+1/2}, v_k \right\rangle }{\tau /2}- \frac{ \left\langle U_k^{l}, v_k \right\rangle }{\tau /2 } + a \left(  U_k^{l+1/2} , v_k \right) = 0, \quad \forall v_k \in X_k.
\end{equation}
Then, for $l=2, \ldots, M-1$, use the Crank--Nicolson scheme
\begin{equation} \label{scheme_CN}
\frac{ \left\langle U_k^{l+1}, v_k \right\rangle }{\tau}- \frac{ \left\langle U_k^{l}, v_k \right\rangle }{\tau} + \frac{ a \left(  U_k^{l+1} , v_k \right)}{2} + \frac{ a \left( U_k^{l}, v_k \right)}{2}= 0, \quad \forall v_k \in X_k.
\end{equation}

In the following, we focus only on the Crank--Nicolson scheme, which forms the main part of the method. The study of the implicit Euler scheme is similar. Inserting $v_k = \psi_{\mu}$ and using a series representation
\begin{equation}
U_k^l = \sum_{ \psi_{ \lambda } \in \Psi^k } \left( \mathbf{c}_k^l \right)_{ \lambda } \psi_{ \lambda }  
\end{equation}
into (\ref{scheme_CN}) yields for $l=2, \ldots, M-1,$ a linear system
\begin{equation} \label{slar_Ak}
\mathbf{A}^k \mathbf{c}_k^{l+1} = \mathbf{f}_k^l, 
\end{equation}
where for $\psi_{\lambda}, \psi_{ \mu} \in \Psi^k$,
\begin{equation}
\mathbf{A}^k_{ \mu, \lambda } = \frac{ \left\langle \psi_{ \lambda },  \psi_{ \mu}\right\rangle }{ \tau } + \frac{ a \left(  \psi_{ \lambda },  \psi_{ \mu} \right) }{2}, \quad
\left( \mathbf{f}_k^l \right)_{ \mu} = \frac{ \left\langle U_k^{l}, \psi_{\mu} \right\rangle }{\tau} - \frac{ a \left( U_k^{l}, \psi_{\mu} \right)}{2}.
\end{equation}
The elements of matrix $\mathbf{A}^k$ can also be expressed as 
\begin{equation}
   \mathbf{A}^k_{ \mu, \lambda } = \left( \frac{1}{\tau} + \frac{r}{2} \right) \left\langle \psi_{ \lambda },  \psi_{ \mu}\right\rangle  + \sum_{i=1}^d \sum_{j=1}^d \frac{ P_{i,j} }{2} \left\langle \frac{ \partial \psi_{\lambda} }{\partial x_i}, \frac{ \partial \psi_{\mu} }{\partial x_j} \right\rangle. 
\end{equation} 
Due to the orthogonality of functions $\psi_{\lambda_i}$ and $\psi_{\mu_i}$, we have $\left\langle \psi_{ \lambda_i },  \psi_{ \mu_j }\right\rangle = \delta_{ \lambda_i, \mu_j }$. Hence, we obtain
\begin{eqnarray} \label{inner_product_1}
\left\langle \frac{ \partial \psi_{\lambda} }{\partial x_i}, \frac{ \partial \psi_{\mu} }{\partial x_j} \right\rangle &=& 
\left\langle \frac{ \partial }{\partial x_i} \prod_{k=1}^d \psi_{\lambda_k} \left( x_k \right), \frac{ \partial }{ \partial x_j } \prod_{l=1}^d \psi_{\mu_l} \left( x_l \right) \right\rangle \\
\nonumber  &=&  \left\langle \psi'_{ \lambda_i },  \psi_{ \mu_i} \right\rangle \left\langle \psi_{ \lambda_j },  \psi'_{ \mu_j}\right\rangle \prod_{\substack{ k=1 \\ k\neq i,j } }^d \delta_{ \lambda_k, \mu_k },
\end{eqnarray}
for $i \neq j$, and similarly
\begin{eqnarray} \label{inner_product_2}
\left\langle \frac{ \partial \psi_{\lambda} }{\partial x_i}, \frac{ \partial \psi_{\mu} }{\partial x_i} \right\rangle &=& 
\left\langle \frac{ \partial }{ \partial x_i } \prod_{k=1}^d \psi_{\lambda_k} \left( x_k \right) , \frac{ \partial }{\partial x_i} \prod_{l=1}^d \psi_{\mu_l} \left( x_l \right) \right\rangle \\
\nonumber  &=&  \left\langle \psi'_{ \lambda_i },  \psi'_{ \mu_i} \right\rangle
\prod_{ \substack{k=1 \\ k\neq i}}^d \delta_{ \lambda_k, \mu_k }.
\end{eqnarray}
Therefore, to compute the entries of the matrix $\mathbf{A}^k$, it is necessary to first compute the products $\left\langle \psi_{ \lambda_i },  \psi_{ \mu_i} \right\rangle$, $\left\langle \psi'_{ \lambda_i },  \psi_{ \mu_i} \right\rangle$, and $\left\langle \psi'_{ \lambda_i },  \psi'_{ \mu_i} \right\rangle$.

We consider matrices of these products. Let $\mathbf{M}^k$ and $\mathbf{B}^k$ be matrices with elements
\begin{equation}
\mathbf{M}^k_{\lambda,\mu} = \left\langle \psi'_{ \lambda },  \psi'_{ \mu } \right\rangle, \quad 
\mathbf{B}^k_{\lambda,\mu} = \left\langle \psi'_{ \lambda },  \psi_{ \mu } \right\rangle,
\quad \left| \lambda \right|, \left| \mu \right| \leq k.    
\end{equation}

Due to the orthogonality property of the basis used, the matrix 
$\mathbf{I}^k$ with elements 
\begin{equation}
    \mathbf{I}^k_{\lambda,\mu} = \left\langle \psi_{ \lambda },  \psi_{ \mu } \right\rangle,
    \quad \left| \lambda \right|, \left| \mu \right| \leq k, 
\end{equation}
is an identity matrix.

The next lemma shows that the condition numbers of matrices $\mathbf{M}^k$ and $\mathbf{B}^k$ increase exponentially with the parameter $k$ characterizing the level of a univariate basis. Let $\left\| \cdot \right\|$ denote the spectral matrix norm and ${\rm cond}$ denote the condition number of a matrix. 

\begin{lem} \label{ lemma_norms_MB }
There exists a constant $C \in \mathbb{R}$ such that
\begin{equation}
 \left\|  \mathbf{M}^k \right\| \leq C 2^{2k},  \quad 
 {\rm cond } \, \mathbf{M}^k \leq C 2^{2k}, \quad
    \left\| \mathbf{B}^k \right\| \leq C 2^{k}.
\end{equation}
\end{lem}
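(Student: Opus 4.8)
The plan is to recognise $\mathbf{M}^k$ as the stiffness matrix and $\mathbf{B}^k$ as a mixed stiffness/mass matrix of the $L^2$-orthonormal univariate wavelet basis $\Psi^k = \Phi_0 \cup \bigcup_{j=0}^{k}\Psi_j$ from \eqref{definition_Psi_I}, and then to read the claimed bounds directly off the Riesz-basis property of Theorem~\ref{basis_Hs} together with the elementary level-dependence of the $H^1$-seminorm of a wavelet. First I would record the scaling estimate: by the dilation structure $\psi_{j,k}=2^{j/2}\psi_l(2^j\cdot-m)$ (assumption $A4)$) a one-line computation gives $\left|\psi_{j,k}\right|_1 = 2^{j}\left|\psi_l\right|_1$, and since there are only finitely many generator shapes (including the boundary ones, of which there is a bounded number per level by locality $A2)$) one obtains constants $c_1,c_2>0$, depending only on the fixed generators, with $c_1 2^{\left|\lambda\right|}\le \left|\psi_\lambda\right|_1\le c_2 2^{\left|\lambda\right|}$ for every $\lambda$ with $\left|\lambda\right|\le k$.

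Next I would treat $\mathbf{M}^k$. It is symmetric, and for any coefficient vector $\mathbf{b}=\{b_\lambda\}$ one has $\mathbf{b}^{T}\mathbf{M}^k\mathbf{b} = \left|\sum_\lambda b_\lambda\psi_\lambda\right|_1^2$. On $H_0^1(0,1)$ the $H^1$-seminorm is an equivalent norm by the Friedrichs inequality, so the Riesz basis of Theorem~\ref{basis_Hs}, normalized in $\left\|\cdot\right\|_1$, is also a Riesz basis of $H_0^1(0,1)$ when normalized in $\left|\cdot\right|_1$; call the resulting Riesz bounds $c_{\psi},C_{\psi}$. Writing $b_\lambda\psi_\lambda=(b_\lambda\left|\psi_\lambda\right|_1)\,\psi_\lambda/\left|\psi_\lambda\right|_1$ and applying the Riesz inequalities gives
\[
c_{\psi}^2\sum_\lambda b_\lambda^2\left|\psi_\lambda\right|_1^2 \ \le\ \left|\sum_\lambda b_\lambda\psi_\lambda\right|_1^2 \ \le\ C_{\psi}^2\sum_\lambda b_\lambda^2\left|\psi_\lambda\right|_1^2 .
\]
Since $\left|\lambda\right|\le k$ forces $\left|\psi_\lambda\right|_1\le c_2 2^{k}$, the right-hand side is at most $C_{\psi}^2 c_2^2 2^{2k}\left\|\mathbf{b}\right\|_2^2$, hence $\left\|\mathbf{M}^k\right\|\le C2^{2k}$. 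Because $\Psi^k$ always contains level-$0$ functions and $\left|\lambda\right|\ge0$ gives $\left|\psi_\lambda\right|_1\ge c_1$, the left-hand side is at least $c_{\psi}^2 c_1^2\left\|\mathbf{b}\right\|_2^2$; this shows $\mathbf{M}^k$ is positive definite with smallest eigenvalue bounded below by a constant independent of $k$. Dividing the two eigenvalue bounds yields ${\rm cond}\,\mathbf{M}^k\le C2^{2k}$.

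For $\mathbf{B}^k$, which is in general nonsymmetric, I would bound $\left\|\mathbf{B}^k\right\| = \sup\{\mathbf{a}^{T}\mathbf{B}^k\mathbf{b}:\left\|\mathbf{a}\right\|_2=\left\|\mathbf{b}\right\|_2=1\}$ by writing $\mathbf{a}^{T}\mathbf{B}^k\mathbf{b} = \big\langle\sum_\lambda a_\lambda\psi'_\lambda,\ \sum_\mu b_\mu\psi_\mu\big\rangle$ and using the Cauchy--Schwarz inequality in $L^2(0,1)$. The first factor equals $\left|\sum_\lambda a_\lambda\psi_\lambda\right|_1\le C2^{k}\left\|\mathbf{a}\right\|_2$ by the estimate just obtained, and the second equals $\left\|\mathbf{b}\right\|_2$ by the $L^2$-orthonormality of $\Psi^k$ (assumption $A1)$); multiplying gives $\left\|\mathbf{B}^k\right\|\le C2^{k}$.

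The argument is essentially routine; the only points that deserve a little care are (i) the passage from the $\left\|\cdot\right\|_1$-normalized Riesz basis of Theorem~\ref{basis_Hs} to the $\left|\cdot\right|_1$-normalized one, which is exactly where the Friedrichs inequality on $H_0^1(0,1)$ is used, and (ii) the bookkeeping of constants, so that the dependence on $k$ enters only through the coarsest level $0$ (for the lower eigenvalue bound) and the finest level $k$ (for the upper bounds), with every other constant depending solely on the fixed set of scaling and wavelet generators.
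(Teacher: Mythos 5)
Your proof is correct, and for $\mathbf{M}^k$ it follows essentially the same route as the paper: combine the $H^1$ Riesz-basis property of assumption $A6)$ with the level scaling $\left|\psi_\lambda\right|_1 \sim 2^{\left|\lambda\right|}$ and the norm/seminorm equivalence on $H_0^1(0,1)$, then read off the eigenvalue bounds from the Rayleigh quotient of the symmetric matrix $\mathbf{M}^k$; whether one rescales the coefficients by $2^{\left|\lambda\right|}$ (as the paper does) or normalizes the wavelets in the seminorm (as you do) is immaterial. The one genuine difference is the treatment of $\mathbf{B}^k$: the paper estimates only the quadratic form $\mathbf{c}^T\mathbf{B}^k\mathbf{c}$ with a single coefficient vector and then passes to the spectral norm via the numerical-range inequality $\left\|\mathbf{N}\right\| \leq 2\sup_{\mathbf{d}\neq 0}\mathbf{d}^T\mathbf{N}\mathbf{d}/\left\|\mathbf{d}\right\|^2$ cited from Gustafson--Rao, paying a factor $2$; you instead use the two-vector characterization $\left\|\mathbf{B}^k\right\| = \sup\{\mathbf{a}^T\mathbf{B}^k\mathbf{b} : \left\|\mathbf{a}\right\|_2=\left\|\mathbf{b}\right\|_2=1\}$ and apply Cauchy--Schwarz together with the $L^2$-orthonormality of $\Psi^k$ directly. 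Your version is slightly cleaner: it avoids the external citation and the factor $2$, and it sidesteps any question about applying a numerical-radius bound to a nonsymmetric real matrix using only real test vectors, while yielding the same $\left\|\mathbf{B}^k\right\|\leq C2^k$ bound. The only small point to keep tidy is the exact scaling $\left|\psi_{j,k}\right|_1 = 2^j\left|\psi_l\right|_1$, which holds verbatim for the inner (translated--dilated) wavelets and, for the finitely many boundary generators per level, holds up to constants depending only on those generators — exactly the two-sided bound $c_1 2^{\left|\lambda\right|}\leq\left|\psi_\lambda\right|_1\leq c_2 2^{\left|\lambda\right|}$ you state, and the same level of detail the paper itself provides.
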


\begin{proof}
By assumption $A6)$, the set $\Psi$, when normalized in the $H^1$-norm, is a Riesz basis of $H_0^1 \left( I \right)$. Therefore, there exist constants $A_1, A_2 > 0$ such that
\begin{equation} \label{Riesz_H1}
 A_1^2  \sum_{ \lambda \in \mathcal{J}^k } d_{ \lambda }^2  \leq \left\|  \sum_{ \lambda \in \mathcal{J}^k }  d_{\lambda} \frac{ \psi_{\lambda} }{ \left\| \psi_{\lambda} \right\|_{H^1} } \right\|^2_1 \leq A_2^2  \sum_{ \lambda \in \mathcal{J}^k } d_{\lambda}^2 .
\end{equation}
By (\ref{psi_jk_translations_dilations}), we have $d \, 2^{ \left| \lambda \right| } \leq \left\| \psi_{ \lambda } \right\|_1 \leq D \, 2^{ \left| \lambda \right| }$.
Hence, for $d_{ \lambda } = 2^{  \left| \lambda \right|
} \, c_{ \lambda }$, we obtain
\begin{equation} 
 B_1^2  \sum_{ \lambda \in \mathcal{J}^k } 2^{2 \left| \lambda \right| } c_{ \lambda }^2 \leq \left\|  \sum_{ \lambda \in \mathcal{J}^k } c_{  \lambda } \psi_{ \lambda } \right\|^2_1 \leq B_2^2  \sum_{ \lambda \in \mathcal{J}^k } 2^{2 \left| \lambda \right| } c_{  \lambda }^2
\end{equation}
for some constants $B_1, B_2 > 0$. Because the $H^1$-norm and the $H^1$-seminorm are equivalent in $H_0^1 \left( I \right)$, the relation (\ref{Riesz_H1}) remains valid only with different constants when replacing the norm $\left\| \cdot \right\|_1$ with the seminorm $\left| \cdot \right|_1$, 
\begin{equation} \label{Riesz_seminorm_H1}
 C_1^2  \sum_{ \lambda \in \mathcal{J}^k } 2^{2 \left| \lambda \right| } c_{\lambda}^2 \leq \left|  \sum_{ \lambda \in \mathcal{J}^k }  c_{\lambda} \psi_{\lambda} \right|^2_1 \leq C^2_2  \sum_{ \lambda \in \mathcal{J}^k } 2^{2 \left| \lambda \right| } c_{\lambda}^2.
\end{equation}

This implies that
\begin{eqnarray} \label{estimate_H1}
C_1^2  \sum_{ \lambda \in \mathcal{J}^k } c_{ \lambda }^2 &\leq&
C_1^2  \sum_{ \lambda \in \mathcal{J}^k } 2^{2 \left| \lambda \right| } c_{ \lambda }^2 \leq \left| \sum_{ \lambda \in \mathcal{J}^k } c_{ \lambda } \psi_{j,l} \right|_1^2 \\
\nonumber &\leq& C_2^2  \sum_{ \lambda \in \mathcal{J}^k } 2^{2 \left| \lambda \right| } c_{ \lambda }^2 \leq C_2^2  \, 2^{2k} \sum_{ \lambda \in \mathcal{J}^k } c_{ \lambda }^2.
\end{eqnarray}

Relation (\ref{estimate_H1}) can be rewritten to matrix form
\begin{equation} \label{estimate_Mk}
C_1^2 \left\| \mathbf{c}  \right\|^2 \leq \left( \mathbf{c} \right)^T \mathbf{M}^k \mathbf{c} \leq C_2^2 \, 2^{2k} \left\| \mathbf{c}  \right\|^2.
\end{equation}
Because matrix $\mathbf{M}^k$ is symmetric, using the Rayleigh quotient for $\mathbf{M}^k$ and (\ref{estimate_Mk}) imply that the maximal and minimal eigenvalues of $\mathbf{M}$ satisfy
\begin{equation}
    \lambda_{min} \left( \mathbf{M}^k \right) \geq C_1^2, \quad \left\|  \mathbf{M}^k \right\| = \lambda_{max} \left( \mathbf{M}^k \right) \leq C_2^2 2^{2k},
\end{equation}
and thus $ {\rm cond } \, \mathbf{M}^k \leq C_2^2 \, 2^{2k} / C_1^2$.

Similarly, using Cauchy--Schwarz inequality,
\begin{eqnarray} 
\left| \left\langle \sum\limits_{j \in \mathcal{J}^k }  c_{\lambda} \psi_{\lambda}, \sum\limits_{j \in \mathcal{J}^k }  c_{\lambda} \psi'_{\lambda} \right\rangle \right|
&\leq& \left\| \sum\limits_{j \in \mathcal{J}^k }  c_{\lambda} \psi_{\lambda} \right\|  \left| \sum\limits_{j \in \mathcal{J}^k }  c_{\lambda} \psi_{\lambda} \right|_1 \\
\nonumber &\leq& C_2  \left( \sum\limits_{j \in \mathcal{J}^k } c_{\lambda}^2 \right)^{1/2}  \left( \sum\limits_{j \in \mathcal{J}^k } 2^{2 \left| \lambda \right| } c_{\lambda}^2 \right)^{1/2} \\
\nonumber &\leq&  C_2 \, 2^{k} \sum\limits_{j \in \mathcal{J}^k } c_{ \lambda }^2.
\end{eqnarray}

This is equivalent to
\begin{equation}
\left( \mathbf{c} \right)^T \mathbf{B}^k \mathbf{c} \leq C_2 \, 2^{k} \left\| \mathbf{c}  \right\|^2.
\end{equation}

In contrast to the matrix $\mathbf{M}^k$, the matrix  $\mathbf{B}^k$ is not symmetric. Therefore, the estimate of its spectral norm using eigenvalues cannot be used here as above. However, it is known that
\begin{equation} \label{norm_numerical_range}
    \left\| \mathbf{N} \right\| \leq 2 \sup\limits_{0 \neq \mathbf{d} }
    \frac{\mathbf{d}^T \mathbf{N} \mathbf{d}}{ \left\| \mathbf{d} \right\|^2},
\end{equation}
holds for any square matrix $\mathbf{N}$; see, e.g., Theorem 1.3-1 in \cite{Gustafson1997}. This implies that, indeed, $\left\| \mathbf{B}^k \right\| \leq C 2^k $, $C = 2 \, C_2$.
\end{proof}

To analyze matrices $\mathbf{A}^k$ corresponding to bases constructed by sparse tensor product, let us first consider discretization matrices $\mathbf{A}^k_{ani}$ corresponding to bases constructed using anisotropic approach. By (\ref{inner_product_1}) and (\ref{inner_product_2}), these matrices can be expressed as the sum of the terms, which are tensor products of matrices $\mathbf{I}^k$, $\mathbf{M}^k$, and $\mathbf{B}^k$, 
\begin{eqnarray} \label{Ak_ani_tensor}
\mathbf{A}^k_{ani} &=&  \left( \frac{1}{\tau} + \frac{r}{2} \right) \mathbf{I}^k + \sum_{i=1}^d \frac{P_{i,i}}{2} \left( \bigotimes\limits_{j=1}^{i-1} \mathbf{I}^k \right) \otimes \mathbf{M}^k \otimes \left( \bigotimes\limits_{j=i+1}^d \mathbf{I}^k \right) \\
\nonumber &-& \sum_{i=1}^{d-1} \sum_{j=i+1}^d P_{i,j}
\left( \bigotimes\limits_{k=1}^{i-1} \mathbf{I}^k \right) \otimes \mathbf{B}^k \otimes \left( \bigotimes\limits_{k=i+1}^{j-1} \mathbf{I}^k \right) \otimes \mathbf{B}^k \otimes \left( \bigotimes\limits_{k=j+1}^{d} \mathbf{I}^k \right).
\end{eqnarray}

This is not valid for sparse tensor product bases. However, matrices $\mathbf{A}^k$ can be divided into blocks such that each block can be computed using the tensor product of the matrices corresponding to one-dimensional bases similarly as in (\ref{Ak_ani_tensor}).

More precisely, for $m = \left( m_1, \ldots, m_d \right)$ and $n = \left( n_1, \ldots, n_d \right)$ such that $\left\| m \right\|_1 \leq k$ and $\left\| n \right\|_1 \leq k$, let $\mathbf{A}^{m,n}$ be a block of $\mathbf{A}^k$ with elements 
\begin{equation}
\mathbf{A}^{m,n}_{\lambda, \mu} =   \left( \frac{1}{\tau} + \frac{r}{2} \right) \left\langle \psi_{ \lambda },  \psi_{ \mu}\right\rangle  + \sum_{i=1}^d \sum_{j=1}^d \frac{ P_{i,j} }{2} \left\langle \frac{ \partial \psi_{\lambda} }{\partial x_i}, \frac{ \partial \psi_{\mu} }{\partial x_j} \right\rangle, 
\end{equation}
where $\lambda = \left( \lambda_1, \ldots, \lambda_d \right)$ and 
$\mu = \left( \mu_1, \ldots, \mu_d \right)$ are such that
$\left| \lambda_i \right| = m_i$ and $\left| \mu_i \right| = n_i$ for $i=1, \ldots d$.
This means that $m$ and $n$ characterize levels of univariate functions $\psi_{\lambda_i}$ and $\psi_{\mu_i}$, which form basis functions $\psi_{\lambda}$ and $\psi_{\mu}$ via the tensor product. Thus, the matrix $\mathbf{A}^k$ can be written as
\begin{equation}
    \mathbf{A}^k = \left( \mathbf{A}^{m,n} \right)_{ 0 \leq \left\| m \right\|_1, \left\| n \right\|_1 \leq k }.
\end{equation}

Similarly to matrix $\mathbf{A}^k_{ani}$, each block $\mathbf{A}^{m,n}$ can be computed as
\begin{eqnarray} \label{Amn_tensor_product}
\mathbf{A}^{m,n} &=&  \left( \frac{1}{\tau} + \frac{r}{2} \right) \mathbf{I} + \sum_{i=1}^d \frac{P_{i,i}}{2} \bigotimes\limits_{j=1}^{i-1} \mathbf{I} \otimes \mathbf{M}^{m_i,n_i} \otimes \bigotimes\limits_{j=i+1}^d \mathbf{I} \\
\nonumber &-& \sum_{i=1}^{d-1} \sum_{j=i+1}^d P_{i,j}
 \bigotimes\limits_{k=1}^{i-1} \mathbf{I} \otimes \mathbf{B}^{m_i,n_i} 
\otimes \bigotimes\limits_{k=i+1}^{j-1} \mathbf{I} \otimes \mathbf{B}^{m_j,n_j} \otimes  \bigotimes\limits_{k=j+1}^{d} \mathbf{I}.
\end{eqnarray}
Here, $\mathbf{I}$ is an identity matrix of appropriate size and $\mathbf{M}^{m,n}$ and $\mathbf{B}^{m,n}$ are matrices with elements 
\begin{equation}
\mathbf{M}^{m,n}_{\lambda,\mu} = \left\langle \psi'_{\lambda}, \psi'_{\mu} \right\rangle, 
\quad \mathbf{B}^{m,n}_{\lambda, \mu} = \left\langle \psi'_{\lambda}, \psi_{\mu} \right\rangle, \quad
\left| \lambda \right| = m, \quad \left| \mu \right| = n.
\end{equation}
The fact that many matrices in (\ref{Amn_tensor_product}) are identity matrices greatly simplifies and streamlines numerical computations with these matrices in comparison with other non-orthogonal tensor product bases.

Due to the orthogonality of the basis and the block tensor product structure (\ref{Amn_tensor_product}), the matrices $\mathbf{A}^{k}$ are uniformly conditioned even without preconditioning, which is studied in the following theorem.

\begin{thm} Let $C_2$ be an upper Riesz bound with respect to the $H^1$ seminorm given by (\ref{Riesz_seminorm_H1}). If 
\begin{equation} \label{assumption_step_size}
\tau < \frac{ \gamma }{ A \, 4^{k} }, \quad A = 2 \, C_2^2 \sum\limits_{i=1}^d \sum\limits_{j=1}^d P_{i,j}, \quad \gamma \in \left( 0, 1 \right),
\end{equation}
then there exists a constant $C$  independent of $k$ such that $ {\rm cond} \, \mathbf{A}^k \leq C$. 
\end{thm}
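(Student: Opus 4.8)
The plan is to isolate a scalar multiple of the identity inside $\mathbf{A}^k$ and to control what remains by a Bernstein-type estimate on the sparse-grid space. First I would record that, by the $L^2$-orthonormality of $\Psi$, the matrix $\mathbf{I}^k$ is the identity, so
\begin{equation}
\mathbf{A}^k = \left( \frac{1}{\tau} + \frac{r}{2} \right) \mathbf{I} + \frac{1}{2} \tilde{\mathbf{S}}^k, \quad
\left( \tilde{\mathbf{S}}^k \right)_{\mu,\lambda} = \sum_{i=1}^{d} \sum_{j=1}^{d} P_{i,j} \left\langle \frac{\partial \psi_{\lambda}}{\partial x_i}, \frac{\partial \psi_{\mu}}{\partial x_j} \right\rangle .
\end{equation}
Since $P_{i,j} = P_{j,i}$, the matrix $\tilde{\mathbf{S}}^k$ is symmetric, and it is positive semidefinite because the coefficient matrix $\mathbf{D} \mathbf{Q} \mathbf{D} / 2$ of its bilinear form is symmetric positive definite, exactly as in the proof of Lemma~\ref{Lemma_Garding}. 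Hence $\mathbf{A}^k$ is symmetric with eigenvalues $\frac{1}{\tau} + \frac{r}{2} + \frac{1}{2}\mu$, $\mu \in \left[ 0, \lambda_{\max}\!\left( \tilde{\mathbf{S}}^k \right) \right]$, and, using $r, \tau > 0$,
\begin{equation}
{\rm cond} \, \mathbf{A}^k = \frac{ \frac{1}{\tau} + \frac{r}{2} + \frac{1}{2} \lambda_{\max}\!\left( \tilde{\mathbf{S}}^k \right) }{ \frac{1}{\tau} + \frac{r}{2} + \frac{1}{2} \lambda_{\min}\!\left( \tilde{\mathbf{S}}^k \right) } \leq 1 + \frac{ \lambda_{\max}\!\left( \tilde{\mathbf{S}}^k \right) }{ 2 / \tau + r } \leq 1 + \frac{\tau}{2} \, \lambda_{\max}\!\left( \tilde{\mathbf{S}}^k \right) .
\end{equation}
Thus the whole problem reduces to the bound $\lambda_{\max}\!\left( \tilde{\mathbf{S}}^k \right) \leq A \, 4^k / 2$; combined with (\ref{assumption_step_size}) this gives ${\rm cond} \, \mathbf{A}^k \leq 1 + \gamma / 4 < 5/4$, so $C = 5/4$ works.

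Next I would estimate $\lambda_{\max}\!\left( \tilde{\mathbf{S}}^k \right)$ through its Rayleigh quotient. For $v = \sum_{\lambda \in \mathcal{J}^k} c_{\lambda} \psi_{\lambda} \in X_k$ we have $\mathbf{c}^T \tilde{\mathbf{S}}^k \mathbf{c} = \sum_{i,j} P_{i,j} \langle \partial v / \partial x_i, \partial v / \partial x_j \rangle$ and $\| \mathbf{c} \|^2 = \| v \|^2$, so by the Cauchy--Schwarz inequality
\begin{equation}
\mathbf{c}^T \tilde{\mathbf{S}}^k \mathbf{c} \leq \sum_{i=1}^{d} \sum_{j=1}^{d} \left| P_{i,j} \right| \left\| \frac{\partial v}{\partial x_i} \right\| \left\| \frac{\partial v}{\partial x_j} \right\| .
\end{equation}
Hence everything reduces to the coordinate-wise Bernstein-type bound $\left\| \partial v / \partial x_i \right\|^2 \leq C_2^2 \, 4^k \, \| v \|^2$ for $i = 1, \ldots, d$, with $C_2$ the upper Riesz constant of (\ref{Riesz_seminorm_H1}); indeed, this bound yields $\mathbf{c}^T \tilde{\mathbf{S}}^k \mathbf{c} \leq C_2^2 \, 4^k \| v \|^2 \sum_{i,j} \left| P_{i,j} \right|$, which is $A \, 4^k \| v \|^2 / 2$ (the $P_{i,j}$ entering (\ref{assumption_step_size}) being nonnegative, or else $A$ is replaced by $2 C_2^2 \sum_{i,j} |P_{i,j}|$).

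To establish the Bernstein bound I would fix $i$ and group the expansion of $v$ by the multi-index $\nu = \left( \lambda_m \right)_{m \neq i}$. Because $\psi_{\lambda} = \bigotimes_{m} \psi_{\lambda_m}$, a direct computation using $\langle \psi_{\lambda_m}, \psi_{\mu_m} \rangle = \delta_{\lambda_m, \mu_m}$ for $m \neq i$ gives
\begin{equation}
\left\| \frac{\partial v}{\partial x_i} \right\|^2 = \sum_{\nu} \left| \sum_{\lambda_i} c_{(\lambda_i, \nu)} \, \psi_{\lambda_i} \right|_{1,I}^2 ,
\end{equation}
where $\left| \cdot \right|_{1,I}$ is the univariate $H^1$-seminorm on $I = (0,1)$ and, for each $\nu$, the inner sum runs over $\left| \lambda_i \right| \leq k - \sum_{m \neq i} \left| \lambda_m \right| \leq k$. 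Applying the univariate upper bound from (\ref{Riesz_seminorm_H1}), whose constant $C_2$ does not depend on the level, to each inner sum gives $\left| \sum_{\lambda_i} c_{(\lambda_i,\nu)} \psi_{\lambda_i} \right|_{1,I}^2 \leq C_2^2 \sum_{\lambda_i} 4^{\left| \lambda_i \right|} c_{(\lambda_i,\nu)}^2 \leq C_2^2 \, 4^k \sum_{\lambda_i} c_{(\lambda_i,\nu)}^2$, since $\left| \lambda_i \right| \leq \| \lambda \|_1 \leq k$ on $\mathcal{J}^k$. Summing over $\nu$ returns $C_2^2 \, 4^k \sum_{\lambda} c_{\lambda}^2 = C_2^2 \, 4^k \| v \|^2$, which is the required estimate, and the displays above then close the proof.

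I expect this last step to be the main obstacle: one must justify that the passage from the univariate norm equivalence (\ref{Riesz_seminorm_H1}) to the $d$-dimensional sparse-grid space is legitimate — that is, that the differentiated coordinate decouples cleanly thanks to $L^2$-orthonormality across the remaining coordinates, and that on the sparse index set $\mathcal{J}^k$ every univariate level stays below $k$, so that a single factor $4^k$ survives rather than $4^{dk}$ or an extra factor growing with $d$ or with $k^{d-1}$. The remaining ingredients — the spectral splitting of $\mathbf{A}^k$, the positive semidefiniteness of $\tilde{\mathbf{S}}^k$ inherited from Lemma~\ref{Lemma_Garding}, and the bookkeeping of the double sum over $i, j$ — are routine.
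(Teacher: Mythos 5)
Your proof is correct, but it takes a genuinely different route from the paper's. The paper first passes from the sparse-grid matrix $\mathbf{A}^k$ to the anisotropic full tensor-product matrix $\mathbf{A}^k_{ani}$ via the principal-submatrix argument ${\rm cond}\,\mathbf{A}^k \leq {\rm cond}\,\mathbf{A}^k_{ani}$, then uses the Kronecker representation (\ref{Ak_ani_tensor}) together with the spectral bounds $\left\| \mathbf{M}^k \right\| \leq C 2^{2k}$ and $\left\| \mathbf{B}^k \right\| \leq C 2^{k}$ (the latter requiring the numerical-range inequality (\ref{norm_numerical_range}) since $\mathbf{B}^k$ is not symmetric), and finally bounds $\left\| \left( \mathbf{A}^k_{ani} \right)^{-1} \right\|$ through the identity (\ref{relation_for_inverse}), arriving at ${\rm cond}\,\mathbf{A}^k \leq (1+\gamma)/(1-\gamma)$. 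You instead stay on the sparse grid: you split off the multiple of the identity, identify the remainder as the symmetric positive semidefinite Galerkin matrix of the second-order part (semidefiniteness inherited from $\mathbf{D}\mathbf{Q}\mathbf{D}$ exactly as in Lemma~\ref{Lemma_Garding}), and control its largest eigenvalue by a Rayleigh-quotient argument plus the coordinate-wise Bernstein estimate $\left\| \partial v / \partial x_i \right\|^2 \leq C_2^2\, 4^k \left\| v \right\|^2$ on $X_k$, which you derive correctly by decoupling the non-differentiated coordinates through $L^2$-orthonormality and applying the univariate seminorm bound (\ref{Riesz_seminorm_H1}) to each fiber, noting that on $\mathcal{J}^k$ every univariate level is at most $k$, so only a single factor $4^k$ appears. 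Your route avoids the detour through $\mathbf{A}^k_{ani}$ and the nonsymmetric matrix $\mathbf{B}^k$ entirely, actually proves (rather than asserts) positive definiteness of $\mathbf{A}^k$, and yields the sharper and dimension-independent bound $1+\gamma/4$ in place of $(1+\gamma)/(1-\gamma)$; what the paper's route buys in exchange is the explicit block tensor-product structure (\ref{Amn_tensor_product}), which it reuses for the implementation. One caveat you share with the paper: both your Rayleigh-quotient estimate and the paper's bound (\ref{estimate_Ck}) implicitly use $P_{i,j} \geq 0$; with negative correlations the constant in (\ref{assumption_step_size}) should be formed with $\sum_{i,j} \left| P_{i,j} \right|$, a point you correctly flag in passing.
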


\begin{proof}
The matrix $\mathbf{A}^k$ is a submatrix of the matrix $\mathbf{A}^k_{ani}$ and both matrices are symmetric positive definite. Therefore, $ {\rm cond } \, \mathbf{A}^k \leq {\rm cond } \, \mathbf{A}^k_{ani}$ and it is sufficient to show the uniform boundedness of condition numbers for $\mathbf{A}^k_{ani}$.

For the sake of simplicity, let denote
\begin{eqnarray}
\mathbf{C}^k &=& \sum_{i=1}^d \frac{P_{i,i}}{2} \left( \bigotimes\limits_{j=1}^{i-1} \mathbf{I} \right) \otimes \mathbf{M}^k \otimes \left( \bigotimes\limits_{j=i+1}^d \mathbf{I} \right) \\
\nonumber &-& \sum_{i=1}^{d-1} \sum_{j=i+1}^d P_{i,j}
\left( \bigotimes\limits_{k=1}^{i-1} \mathbf{I} \right) \otimes \mathbf{B}^k \otimes \left( \bigotimes\limits_{k=i+1}^{j-1} \mathbf{I} \right) \otimes \mathbf{B}^k \otimes \left( \bigotimes\limits_{k=j+1}^{d} \mathbf{I} \right).
\end{eqnarray}
The proof of Theorem~\ref{ lemma_norms_MB } implies that 
\begin{eqnarray} \label{estimate_Ck}
\left\| \mathbf{C}^k \right\| \leq \sum_{i=1}^d \frac{P_{i,i}}{2} \left\| \mathbf{M}^k \right\| + \sum_{i=1}^{d-1} \sum_{j=i+1}^d P_{i,j} \left\| \mathbf{B}^k \right\|^2 \leq 2 \, C_2^2 \, 2^{2k} \sum_{ i=1}^d \sum_{j=1}^d P_{i,j}.
\end{eqnarray}
Thus, we have
\begin{equation}
\left\| \mathbf{A}^k_{ani} \right\| = \left\| \left( \frac{1}{\tau} + \frac{r}{2} \right) \mathbf{I} + \mathbf{C}^k \right\| 
\leq \left( \frac{1}{\tau} + \frac{r}{2} \right) + A \, 2^{2k }
\end{equation}
with $A$ given by (\ref{assumption_step_size}).

Now, we aim to estimate the norm of the inverse of $\mathbf{A}^k_{ani}$. Using
\begin{equation}
\left( \left( \frac{1}{\tau} + \frac{r}{2} \right) \mathbf{I} + \mathbf{C}^k \right) \left( \left( \frac{1}{\tau} + \frac{r}{2} \right) \mathbf{I} + \mathbf{C}^k \right)^{-1} = \mathbf{I},   
\end{equation}
we obtain
\begin{equation} \label{relation_for_inverse}
\left( \frac{1}{\tau} + \frac{r}{2} \right) \left( \left( \frac{1}{\tau} + \frac{r}{2} \right) \mathbf{I} + \mathbf{C}^k \right)^{-1} = \mathbf{I} -
\mathbf{C}^k \left( \left( \frac{1}{\tau} + \frac{r}{2} \right) \mathbf{I} + \mathbf{C}^k \right)^{-1}.   
\end{equation}
To simplify notations, denote 
\begin{equation}
\mathbf{F} = \left( \left( \frac{1}{\tau} + \frac{r}{2} \right) \mathbf{I} + \mathbf{C}^k \right)^{-1}. 
\end{equation}
Taking spectral norm of (\ref{relation_for_inverse}), we have
\begin{eqnarray}
\left( \frac{1}{\tau} + \frac{r}{2} \right) \left\| \mathbf{F} \right\| & \leq &
1 + \left\| \mathbf{C}^k \right\| \cdot \left\| \mathbf{F} \right\| \\
\left\| \mathbf{F} \right\| & \leq & \frac{ 1 }{ \left( \frac{1}{\tau} + \frac{r}{2} \right) - \left\| \mathbf{C}^k \right\| },
\end{eqnarray}
because the denominator is positive due to assumption (\ref{assumption_step_size}) and relation (\ref{estimate_Ck}). Hence,
\begin{equation}
\left\| \left( \mathbf{A}^k_{ani} \right)^{-1} \right\| \leq \frac{1}{ \left( \frac{1}{\tau} + \frac{r}{2} \right) - A 2^{2k} },
\end{equation}
and using assumption (\ref{assumption_step_size}), we have  
\begin{equation}
{ \rm cond}  \, \mathbf{A}^k \leq { \rm cond}  \, \mathbf{A}^k_{ani} \leq \frac{ 1 +  A \, 2^{2k} \left( \frac{1}{\tau} + \frac{r}{2} \right)^{-1}  }{   1 -  A \, 2^{2k} \left( \frac{1}{\tau} + \frac{r}{2} \right)^{-1} } \leq \frac{ 1 + \gamma }{ 1 - \gamma }.
\end{equation}
\end{proof}

Note that the condition number of $\mathbf{A}^k$ is not only uniformly bounded, but also that the upper bound does not depend on dimension $d$, which is the significant result because, for many methods, the condition number grows exponentially with~$d$.
Note that condition (\ref{assumption_step_size}) is not very restrictive. Indeed, if spline wavelet basis of order $p$ is used, then by Theorem~\ref{thm_approx_sparse} the optimal approximation order with respect to the $L^2$ norm is $\mathcal{O} \left( 2^{-pk} \right)$, if we neglect the term $k^{\left( d-1 \right)/2 }$. The optimal $L^2$ error for the Crank--Nicolson scheme is $\mathcal{O} \left( \tau^2 \right)$. Therefore, the optimal choice is $\tau = C 2^{ -pk / 2 } \leq C 4^{-k}$ for $p \geq 4$.

Let $\mathbf{c}^*$ be the exact solution of equation (\ref{slar_Ak}) and $\mathbf{c}^k$ be an approximate solution after $k$ conjugate gradient iterations. It is well known that the speed of convergence of $\mathbf{c}^k$ depends on the condition number of the matrix $\mathbf{A}^k$. More precisely, 
\begin{equation*}
\left\|\mathbf{c}^k - \mathbf{c}^* \right\|_A
\leq 2 \left( \frac{\sqrt{ { \rm cond} \, \mathbf{A}^k }-1}{\sqrt{ { \rm cond} \, \mathbf{A}^k }+1}  \right)^k \left\|\mathbf{c}^0 - \mathbf{c}^{*} \right\|_A,
\end{equation*}
where $\left\|\cdot\right\|_A$ is given by $\left\|\mathbf{x}\right\|_A=\sqrt{\mathbf{x}^T \mathbf{A} \mathbf{x} }$.

Hence, the uniformly bounded condition number of matrix $\mathbf{A}^k$ implies a uniformly bounded number of conjugate gradient iterations, if the same stopping criterion for relative residuals is used for all $k$. Moreover, the upper bound for a number of iterations does not increase with the dimension $d$.

\bigskip
\section{Numerical examples}
\label{Section_examples}

In this section, numerical examples are presented to confirm the theoretical results and illustrate the efficiency and reliability of the method. The proposed wavelet scheme is used to evaluate prices of European-style options on the geometric average of the underlying assets. The advantage of these types of options is that after a transformation, the option contract can be valuated using the standard one-dimensional Black--Scholes model \cite{Berridge2004, Leentvaar2008}. Then, the resulting errors can be computed comparing numerical and analytic solutions. 

The geometric average of asset prices $S_1, \ldots, S_d$ is defined as
\begin{equation} \label{geom_average_assets}
    S = \left( \prod\limits_{i=1}^d S_i \right)^{1/d}.
\end{equation}

The payoff function representing the value of the option at maturity is for the strike $K$ given by
\begin{equation}
    V \left( S_1, \ldots, S_d, 0 \right) = \max \left( K - S, 0 \right)
\end{equation}
for a put option and 
\begin{equation}
    V \left( S_1, \ldots, S_d, 0 \right) = \max \left( S - K, 0 \right)
\end{equation}
for a call option. The graphs of payoff functions for $K=10$ and $d=2$ are displayed in Figure~\ref{Fig9}.

\begin{figure}[htbp]
\centering
\caption{ Payoff functions for a put option (left) and a call option (right) on the geometric average of two assets with the strike $K=10$. }
\includegraphics[width=13.0cm]{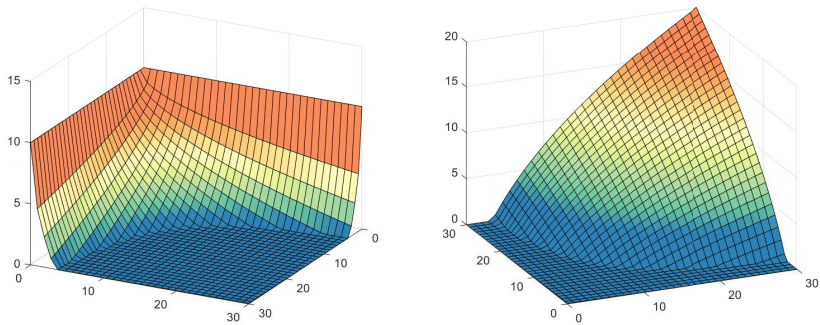} 
\label{Fig9}
\end{figure} 

The price of a put option on the geometric average of $d$ assets can be computed using the standard Black--Scholes formula
\begin{equation*} \label{BS_formula}
V_{put} \left( S_1, \ldots, S_d, t \right) = K e^{-rt} N \left( - d_2 \right) - S e^{ - \delta t} N \left( - d_1 \right),
\end{equation*}
where $S$ is given by (\ref{geom_average_assets}), $N$ is the standard normal density function 
\begin{equation}
    N \left( x \right) = \frac{ 1 }{ \sqrt{ 2 \pi } } \int\limits_{ - \infty}^x
    e^{ - \frac{ t^2 }{ 2 } } dt,
\end{equation}
and
\begin{equation}
 d_1 =  \frac{ \ln \frac{ S }{  K } + \left( r - \delta + \frac{ \sigma^2 }{2} \right) t}{ \sigma \sqrt{t} }, \quad 
 d_2 =  \frac{ \ln \frac{ S }{ K } + \left( r - \delta - \frac{ \sigma^2 }{2} \right) t}{ \sigma \sqrt{t} }.
\end{equation}
The parameters $\sigma$ and $\delta$ are given by
\begin{equation}
\sigma^2 = \frac{1}{d^2} \sum_{i=1}^d \sum_{j=1}^d \rho_{ij} \sigma_i \sigma_j, \quad \delta = \frac{ 1 }{2d } \sum\limits_{i=1}^d  \sigma_i^2  - \frac{ \sigma^2 }{2}. 
\end{equation}
The price of a call option can then be computed using the put-call parity
\begin{equation}
V_{call} \left( S_1, \ldots, S_d, t \right) = V_{put} \left( S_1, \ldots, S_d, t \right)  + S e^{ - \delta t}  - K e^{-rt}.
\end{equation}

{\noindent \bf Example. Options on the geometric average of $d$ assets}

In this example, numerical results are presented for options on the geometric average with parameters taken from \cite{Berridge2004}. The strike price is $K=10$ and the option is maturing in $T=1$ year. The correlation coefficients are $ \rho_{i,j} = 0.25$ for $i \neq j$, the risk-free interest rate for a given horizon is $r = 0.06$ p.a., and the corresponding volatilities are $\sigma_i = 0.2$. The parameters characterizing the domain for the computation are set to  $S^{min}=0.1$ and $S^{max}=50$. Figure~\ref{Fig10} shows the resulting functions representing prices of put and call options for these parameters computed using the proposed method with constructed orthogonal cubic spline wavelet basis.  Because artificial boundary conditions are used, the plots are shown only in the region $\left( 1, 30 \right)^2$, to avoid the area near $S_1=0$ and $S_2=0$.

\begin{figure}[htbp]
\centering
\caption{\emph{ Functions representing values of a put option (left) and a call option (right) on the geometric average of two assets.   }}
\includegraphics[width=13.0cm]{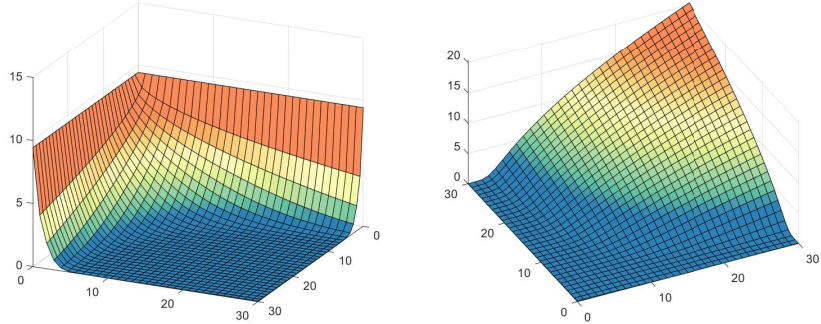} 
\label{Fig10}
\end{figure} 

Table~\ref{Tab1} presents the results. Parameter $d$ denotes the number of underlying assets, i.e., the spatial dimension of the problem. The parameter $k$ is a number characterizing basis $\Psi^k$ constructed using sparse tensor product approach, $N$ is the number of basis functions, and $M$ is the number of time steps. The resulting system is solved by the conjugate gradient method without preconditioning. The iterations stop when the relative residual is smaller than $10^{-10}$ and the resulting number of iterations is denoted $it$. The pointwise errors are computed for prices of underlying assets $P_1 = \left( K/2, \ldots, K/2 \right) \in \mathbb{R}^d$, $P_2 = \left( K, \ldots, K \right) \in \mathbb{R}^d$,
and $P_3 = \left( 3K/2, \ldots, 3K/2 \right) \in \mathbb{R}^d$ 
at time to maturity $t = T$, 
\begin{equation}
    e_k \left( P_i \right) = \left| V_k \left( P_i, T \right) -
    V \left( P_i, T \right) \right|, \quad i=1,2,3,
\end{equation}
where $V$ represents exact value computed by analytic formula (\ref{BS_formula}) and $V_k$ stands for the solution computed using the proposed method with basis $\Psi^k$.

\begin{table}[ht]
\centering
\caption{ Pointwise errors $e_k \left( P_i \right)$, $i=1,2,3$, and numbers of iterations for options on the geometric average for $d$ assets. }
\label{Tab1}
\begin{tabular}
{ | r  r r  r | c c  c | c c c  |}
    \hline
  &  &  &   &    \multicolumn{3}{|c|}{put option }  &  \multicolumn{3}{c|}{ call option  }   \\
  & $k$ &  $N$ & $M$ & $it$ & $ e_k \left( P_1 \right) $ & $ e_k \left( P_2 \right) $ & $it$ & $  e_k \left( P_2 \right) $ & $ e_k \left( P_3 \right) $ \\
  \hline \hline
$d=2$ & 0 &     36 &     1 & 9 & 4.09e-1 & 3.89e-1 &  8 & 4.67e-1 & 1.98e-1 \\
      & 1 &    144 &     4 & 9 & 2.16e-3 & 5.29e-3 &  9 & 2.14e-2 & 6.74e-2 \\
      & 2 &    432 &    16 & 8 & 1.77e-3 & 1.89e-3 &  8 & 6.58e-3 & 6.69e-3 \\
      & 3 &  1 152 &    64 & 7 & 6.42e-4 & 9.11e-4 &  8 & 8.85e-4 & 1.95e-3 \\
      & 4 &  2 880 &   256 & 6 & 7.60e-5 & 7.31e-5 &  6 & 6.93e-5 & 1.11e-5 \\
      & 5 &  6 912 & 1 024 & 5 & 4.51e-6 & 3.52e-7 &  5 & 5.84e-6 & 6.56e-6 \\
      & 6 & 16 128 & 4 096 & 6 & 2.28e-7 & 2.81e-7 &  5 & 2.28e-7 & 7.83e-7 \\
\hline 
$d=3$ & 0 &    216 &     1 & 9 & 3.62e-1 & 1.83e-1 &  9 & 3.26e-1 & 3.04e-1 \\
      & 1 &  1 728 &     4 & 9 & 1.07e-3 & 1.12e-2 &  9 & 3.52e-2 & 6.81e-2 \\
      & 2 &  6 912 &    16 & 8 & 9.72e-4 & 6.45e-4 &  8 & 6.63e-3 & 6.77e-3 \\
      & 3 & 22 464 &    64 & 6 & 5.38e-5 & 1.13e-3 &  6 & 1.37e-3 & 2.36e-3 \\
      & 4 & 65 664 &   256 & 5 & 3.37e-6 & 1.98e-4 &  5 & 1.93e-4 & 1.56e-4 \\
      & 5 &179 712 & 1 024 & 4 & 9.97e-6 & 8.25e-6 &  4 & 8.36e-6 & 3.62e-6 \\
\hline
$d=4$ & 0 &     1 296 &   1 & 10 & 3.36e-1 & 1.69e-1 & 10 & 2.34e-1 & 4.18e-1 \\
      & 1 &    20 736 &   4 & 10 & 7.32e-3 & 8.71e-3 & 10 & 3.96e-2 & 7.63e-2 \\
      & 2 &   103 680 &  16 &  8 & 3.69e-4 & 7.60e-4 &  8 & 8.05e-3 & 5.12e-3 \\
      & 3 &   393 984 &  64 &  6 & 1.17e-4 & 2.57e-4 &  6 & 3.71e-4 & 2.19e-3 \\
\hline
$d = 5$& 0 &     7 776 &  1 & 10 & 3.21e-1 & 1.58e-1 & 10 & 1.63e-1 & 5.05e-1  \\
      & 1 &   248 832 &  4 & 10 & 1.66e-3 & 6.16e-3 & 10 & 4.52e-2 & 8.45e-2 \\
      & 2 & 1 492 992 & 16 &  8 & 6.46e-4 & 2.65e-4 &  7 & 1.14e-2 & 2.53e-3 \\
 \hline
\end{tabular}
\end{table}

\medskip
\section{Conclusion}
In this paper, we presented a wavelet-based method for option pricing under the Black--Scholes model. Because the wavelet basis used is a cubic spline wavelet basis, the method is high-order accurate with respect to the variables representing prices, and the sparse tensor product structure of the basis enables us to overcome the so-called ``curse of dimensionality." Due to the orthogonality of the basis, the resulting stiffness matrices have uniformly bounded condition numbers even without preconditioning. Moreover, the condition number does not increase with the dimension. The numerical experiments for European-style options on the geometric average confirm the efficiency and applicability of the scheme; high-order convergence is achieved; and the numbers of iterations are small, uniformly bounded and not growing with the dimension. The described method can be used for pricing any option that is represented by the presented Black--Scholes model such as basket options, options on maximum or minimum of several underlying assets, and real options.

\section*{Acknowledgments}
This work was supported by grant No. GA22-17028S funded by the Czech Science Foundation and grant No. PURE-2020-4003 funded by the Technical University of Liberec.





\begin{thebibliography}{00}

\bibitem{Achdou2005} 
Y. Achdou, O. Pironneau, O., Computational Methods for Option Pricing, Society for Industrial and Applied Mathematics, Philadelphia, 2005.

\bibitem{Berridge2004}
S. Berridge, J.M. Schumacher, An irregular grid method for high-dimensional free-boundary problems in finance, Futur. Gener. Comp. Syst. 20 (2004) 353--362.

\bibitem{Black1973}
F. Black, M. Scholes, The pricing of options and corporate liabilities, J. Polit. Econ. 81 (1973) 637--659.

\bibitem{Cohen2003}
A. Cohen, Numerical Analysis of Wavelet methods, Studies in Mathematics and its Applications 32, Elsevier, Amsterdam, 2003.

\bibitem{Cerna2016} 
D. \v{C}ern\'a, V. Fin\v{e}k, On a sparse representation of an n-dimensional Laplacian in wavelet coordinates, Result. Math. 69 (2016) 225--243.

\bibitem{Cerna2020} 
D. Černá, V. Finěk, Galerkin method with new quadratic spline wavelets for integral and integro-differential equations, J. Comput. Appl. Math. 363 (2020) 426--443. 

\bibitem{Cerna2021a}
D. \v{C}ern\'a, V. Fin\v{e}k, Wavelet-Galerkin method for second-order integro-differential equations on product domains, in: H. Singh, H. Dutta, M.M. Cavalcanti, M.M. (Eds.), Topics in Integral and Integro-Differential Equations, Springer, Switzerland, 2021, pp. 1--40.

\bibitem{Dahmen1996}
W. Dahmen, Stability of multiscale transformations, J. Fourier Anal. Appl. 4 (1996) 341--362.

\bibitem{Dahmen2000}
W. Dahmen, B. Han, R.Q. Jia, A. Kunoth, Biorthogonal multiwavelets on the interval: cubic Hermite splines, Constr. Approx. 16 (2000) 221--259.

\bibitem{Dahmen2003}
W. Dahmen, Multiscale and wavelet methods for operator equations, Lecture Notes in Mathematics 1825, 2003, pp.~31--96.

\bibitem{Dahmen1992}
W. Dahmen, A. Kunoth, Multilevel preconditioning, Numer. Math. 63 (1992) 315--344.

\bibitem{Dijkema2010}
 T.J. Dijkema, R. Stevenson, A sparse Laplacian in tensor product wavelet coordinates, Numer. Math. 115 (2010) 433--449.

\bibitem{Dijkema2009}
T.J. Dijkema, C. Schwab, R. Stevenson, An adaptive wavelet method for solving high-dimensional elliptic PDEs, Constr Approx 30 (2009) 423-–455.

\bibitem{Donovan1996}
G.C. Donovan, J.S. Geronimo, D.P. Hardin, Intertwining multiresolution analyses and the construction of piecewise-polynomial wavelets, SIAM J. Math. Anal. 27 (1996) 1791--1815.

\bibitem{Donovan1999}
G.C. Donovan, J.S. Geronimo, D.P. Hardin, Orthogonal polynomials and the construction of piecewise polynomial smooth wavelets, SIAM J. Math. Anal. 30 (1999) 1029--1056.

\bibitem{Doostaki2022}
R. Doostaki, M.M. Hosseini, Option pricing by the Legendre wavelets method, Comput Econ 59 (2022) 749–-773.  

\bibitem{Ortiz2013}
L. Ortiz-Gracia, C.W. Oosterlee, Robust pricing of European options with wavelets and the characteristic function. SIAM J. Sci. Comput., 35 (2013) B1055–-B1084. 

\bibitem{Ortiz2016}
L. Ortiz-Gracia, C.W. Oosterlee, A highly efficient Shannon wavelet inverse Fourier technique for pricing European options. SIAM J. Sci. Comput., 38 (2016) B18–-B143.

\bibitem{Griebel1995} 
M. Griebel, P. Oswald, Tensor product type subspace splittings and multilevel iterative methods for anisotropic problems, Adv. Comput. Math. 4 (1995) 171--206. 

\bibitem{Griebel1999}
M. Griebel, P. Oswald and T. Schiekofer, Sparse grids for boundary integral equations, Numer. Math. 83 (1999) 279--312.

\bibitem{Gustafson1997}
K.E. Gustafson, D.K.M. Rao, Numerical Range, The Field of Values of Linear Operators and Matrices, Springer-Verlag, Berlin, 1997.

\bibitem{Hilber2013} 
N. Hilber, O. Reichmann, C. Schwab, C. Winter, Computational Methods for Quantitative Finance, Springer, Berlin, 2013.

\bibitem{Hu2018}
J. Hu, S. Gan, High order method for Black–Scholes PDE, Comput. Math. Appl. 75 (2018)  2259--2270.

\bibitem{Jo2013}
J. Jo, Y. Kim, Comparison of numerical schemes on multi-dimensional Black--Scholes equations, Bull. Korean Math. Soc. 50 (2013) 2035--2051.

\bibitem{Kadalbajoo2012}
M.K. Kadalbajoo, L.P. Tripathi, A. Kumar, A cubic B-spline collocation method for a numerical solution of the generalized Black–Scholes equation Math, Comput. Modelling 55 (2012) 1483--1505.

\bibitem{Kestler2013}
S.R. Kestler, On the Adaptive Tensor Product Wavelet Galerkin Method
with Applications in Finance, PhD thesis, Ulm, 2013.

\bibitem{Kim2016}
J. Kim, T. Kim, J. Jo, Y. Choi, S. Lee, H. Hwang, M. Yoo, D. Jeong, A practical finite difference method for the three-dimensional Black–Scholes equation, European J. Oper. Res. 252 (2016) 183--190.

\bibitem{Lee2022}
C. Lee, S. Kwak, Y. Hwang, J. Kim, Accurate and efficient finite difference method for the Black–Scholes model with no far-field boundary conditions, Computat. Econ. (2022).

\bibitem{Leentvaar2008}
C.C.W. Leentvaar, Pricing multi-asset options with sparse grids, Ph.D. thesis, Delft University of Technology, Delft, 2008.

\bibitem{Lyu2021}
J. Lyu, E. Park, S. Kim, W. Lee, C. Lee, S. Yoon, J. Park, J.J. Kim, Optimal non-uniform finite difference grids for the Black--Scholes equations, Math. Comput. Simul. 182 (2021) 690-–704.

\bibitem{Marquardt1963}
D. Marquardt, An algorithm for least-squares estimation of nonlinear parameters, SIAM J.  Appl. Math. 11 (1963) 431-–441. 

\bibitem{Milovanovic2020}
S. Milovanovi\'c, L. von Sydow, A high order method for pricing of financial derivatives using radial basis function generated finite differences, Math. Comput. Simul. 174 (2020)
205--217.

\bibitem{Mohammadi2015}
R. Mohammadi, Quintic B-spline collocation approach for solving generalized Black–Scholes equation governing option pricing, Comput. Math. Appl. 69 (2015) 777--797.

\bibitem{Petersdorff2004}
 T. von Petersdorff, C. Schwab, Numerical solution of parabolic equations in high dimensions, ESAIM-Math. Model. Numer. Anal. 38 (2004) 93-–127.

\bibitem{Rannacher1984} 
 R. Rannacher. Finite Element solution of diffusion problems with irregular data. Numerische
Mathematik, 43 (1984) 309--327.

\bibitem{Rometsch2010}
M. Rometsch, A Wavelet Tour of Option Pricing, PhD thesis, Ulm University, 2010.

\bibitem{Roul2022}
P. Roul, V.M.K. Prasad Goura, A sixth order numerical method and its convergence for generalized Black–Scholes PDE, J. Comput. Appl. Math. 377 (2022) 112881.

\bibitem{Reich2008}
N. Reich, Wavelet Compression of Anisotropic Integrodifferential Operators on Sparse Tensor Product Spaces, PhD thesis, ETH Z\"urich, 2008.

\bibitem{Rupp2013}
A.J. Rupp, High Dimensional Wavelet Methods for Structured Financial Products, PhD thesis, Ulm University, 2013.

\end{thebibliography}


\newpage
\appendix
\section{ Coefficients of constructed generators }
\label{Appendix_1}

\begin{longtable}{ | c | r r |  r |  r | r | r | }
\caption{ Coefficients of the scaling generators corresponding to the terms $x^3$, $x^2$, $x$, and $1$ on $\left[ a, b \right]$. } 
\label{Tab3} \\

\hline 
 & a & b  &   &  coefficient  &  &  coefficient \\ \hline
\endfirsthead

\multicolumn{6}{c}%
 {{\bfseries \tablename\ \thetable{} -- continued from previous page}} \\
\hline
 & a & b  &   &  coefficient  &  &  coefficient \\ \hline
\endhead

\hline
\endfoot

\hline
\endlastfoot

 \hline 
$\phi_1$ & 0 & 0.5 & $x^3$ & -26.25320493255574  & $x^2$ &  19.68990369941681 \\*
             &    &  & $x$ &                  0  &   $1$ &                 0  \\
         & 0.5 & 1 & $x^3$ &  26.25320493255574  & $x^2$ & -59.06971109825042 \\*
            &     &  & $x$ &  39.37980739883362  &   $1$ & -6.563301233138936 \\ \hline
$\phi_2$ & 0 & 0.5 & $x^3$ &  81.97560612767679  & $x^2$ & -40.98780306383839 \\*
          &     &    & $x$ &                  0  &   $1$ &                  0 \\
   &   0.5 &   1 &   $x^3$ &  81.97560612767679  & $x^2$ & -204.9390153191920 \\*
       &    &        & $x$ &  163.9512122553536  &   $1$ & -40.98780306383839 \\
 \hline
$\phi_3$ & 0 & 0.25& $x^3$ &  53.41388035788556  & $x^2$ & -35.17323440989752 \\*
        &     &      & $x$ &                  0  &   $1$ &                  0 \\
      & 0.25 & 0.5 & $x^3$ & -527.8691353655910  & $x^2$ &  604.3590514426924 \\*
         &    &      & $x$ & -210.7755774781431  &   $1$ &  21.80567362442823 \\
      & 0.5 & 0.75 & $x^3$ &  167.3619498129961  & $x^2$ & -332.4004596541814 \\*
       &    &        & $x$ &  204.5606197347904  &   $1$ & -38.57643285514344 \\
     & 0.75  &  1  & $x^3$ & -381.1201871105696  & $x^2$ &  1023.674050039347 \\*
          &    &     & $x$ & -903.9875387469848  &   $1$ &  261.4336758182076 \\
          \hline
$\phi_4$ & 0 & 0.25 &$x^3$ &  26.62362537753055  & $x^2$ &  15.35514686530018 \\*
         &     &     & $x$ &                  0  &   $1$ &                  0 \\
    & 0.25  &  0.5 & $x^3$ &  1269.300093977873  & $x^2$ & -1351.930373703732 \\*
      &      &      & $x$ &  450.6409224219518  &   $1$ & -46.62170539180379 \\
  &   0.5 &  0.75 & $x^3$ &  567.2864791328406  & $x^2$ & -1200.276995000695 \\*
          &    &    & $x$ &  825.4977548526893  &   $1$ & -184.2117644273027 \\
    &   0.75  & 1  & $x^3$ & -399.7440135001654  & $x^2$ &  1084.571643661303 \\*
             &   &   & $x$ & -969.9112468221101  &   $1$ &  285.0836166609723 \\
 \hline
$\phi_5$& 0 & 0.25 & $x^3$ &  10.19187443147913  & $x^2$ &  28.81827269560206 \\*
          &    &     & $x$ &  27.06092209676673  &   $1$ &  8.434523832643798 \\
     &  0.25 & 0.5 & $x^3$ &  122.4194193918863  & $x^2$ &  242.0020294473649 \\*
          &    &    & $x$ &  157.4525751037239  &   $1$ &  33.65839594516685 \\
  &   0.5 &  0.75 & $x^3$ &  179.9690473081142  & $x^2$ &  185.5152420484449 \\*
            &    &   & $x$ &  57.80356676763296  &   $1$ &  5.149292116379875 \\
    &  0.75  &  1  & $x^3$ & -365.9470852800144  & $x^2$ & -134.8104412073693 \\*
            &   &    & $x$ &                  0  &   $1$ &  2.188816056270510 \\
     & 0   &  0.25 & $x^3$ &  79.26943625448920  & $x^2$ & -58.48104390125948 \\*
          &    &    & $x$ &                  0  &   $1$ &  2.188816056270510 \\
    &  0.25  & 0.5 & $x^3$ & -354.3621713021780  & $x^2$ &  418.6216724392985 \\*
      &      &     & $x$   & -157.2454317534039  & $1$   &  18.45674809140953 \\
      & 0.5 & 0.75 & $x^3$ & -50.59040574008613  & $x^2$ &  103.1911403389635 \\*
      &     &      & $x$   & -69.64372382463778  & $1$   &  15.54205645684875 \\
      & 0.75 & 1   & $x^3$ & -2.187234289273272  & $x^2$ &  6.198136014518121 \\* 
             &  &  & $x$   & -5.834569161216426  & $1$   &  1.823667435971577 \\
\hline
$\phi_6$&0&0.25 & $x^3$ & -15.40091533093236 & $x^2$ & -43.54717325197819 \\*
            &  &  & $x$ & -40.89160051115930 &   $1$ & -12.74534259011347 \\
   & 0.25 & 0.5 & $x^3$ & -184.9074696818534 & $x^2$ & -365.5196063064310 \\*
   &      &     &   $x$ & -237.8079396256591 & $1$   & -50.83368094965349 \\
   & 0.5 & 0.75 & $x^3$ & -265.5803178284557 & $x^2$ & -271.7997826856661 \\*
   &     &      &   $x$ & -83.58347989494250 &   $1$ & -7.235513007811689 \\
   & 0.75 &   1 & $x^3$ &  989.1663297899967 & $x^2$ &  460.3081592797291 \\*
   &     &      &   $x$ &  47.20549465929524 &   $1$ & -0.689599373051132 \\
   &   0 & 0.25 & $x^3$ &  273.6635501100016 & $x^2$ & -249.4001766673951 \\*
   &     &      &   $x$ &  47.20549465929524 &   $1$ & -0.689599373051132 \\
   & 0.25&  0.5 & $x^3$ & -610.5427721702383 & $x^2$ &  723.4466794016099 \\*
   &     &      &   $x$ & -273.4292479476622 &   $1$ &  32.48188156000418 \\
   & 0.5 & 0.75 & $x^3$ & -98.42902348889925 & $x^2$ &  199.2567948682677 \\*
   &     &      & $x$   & -133.3246749253244 &   $1$ &  29.46284759700341 \\
   &0.75 &    1 & $x^3$ & -5.217694467034758 & $x^2$ &  14.77336749684687 \\*
   &     &      & $x$   & -13.89365159258947 &   $1$ &  4.337978562777356 \\ \hline
$\phi_L$&0&0.25 & $x^3$ & -328.4179396059963 & $x^2$ &  294.5324087893769 \\*
   &     &      &   $x$ & -51.91281405451471 &   $1$ &                  0 \\
   &0.25 &  0.5 & $x^3$ &  794.2029204485535 & $x^2$ & -940.6297684522534 \\*
   &     &      &   $x$ &  355.1768633060724 &   $1$ & -42.11573420089721 \\
   & 0.5 & 0.75 & $x^3$ &  125.7725578721856 & $x^2$ & -254.8795911518400 \\*
   &     &      &   $x$ &  170.7494579379349 &   $1$ & -37.78578051988584 \\
   & 0.75&    1 & $x^3$ &  6.495819573712430 & $x^2$ & -18.39405161997127 \\*
   &     &      &   $x$ &  17.30064451880524 &   $1$ & -5.402412472546407 \\ \hline
$\phi_R$&0&0.25 & $x^3$ &  23.35622528154187 & $x^2$ & -4.027321052314352 \\*
   &     &      &   $x$ &                  0 &   $1$ &                  0 \\
   &0.25 &  0.5 & $x^3$ &  280.3889896276431 & $x^2$ & -286.9065184334287 \\*
   &     &      &   $x$ &  93.24595537566320 &   $1$ & -9.647675950503985 \\
   & 0.5 & 0.75 & $x^3$ &  400.2203835185070 & $x^2$ & -791.8720820637919 \\*
   &     &      &   $x$ &  508.3379735878785 &   $1$ & -105.9312183853788 \\
   &0.75 &    1 & $x^3$ & -1674.366086197562 & $x^2$ &  4222.513185710944 \\*
   &     &      &   $x$ & -3512.375260428360 &   $1$ &  964.2281609149772 \\
\hline
\end{longtable}

\begin{longtable}{ | c | r r |  r |  r | r | r | }
\caption{ Coefficients of the wavelet generators corresponding to the terms $x^3$, $x^2$, $x$, and $1$ on $\left[ a, b \right]$. } 
\label{Tab4} \\

\hline 
 & a & b  &   &  coefficient  &  &  coefficient \\ \hline
\endfirsthead

\multicolumn{6}{c}%
 {{\bfseries \tablename\ \thetable{} -- continued from previous page}} \\
\hline
 & a & b  &   &  coefficient  &  &  coefficient \\ \hline
\endhead

\hline
\endfoot

\hline
\endlastfoot

 \hline                                                              
$\psi_1$&0&0.125&$x^3$ &  512.7201177918754 & $x^2$ & -43.88706601604656 \\*
          &  &  & $x$ &                  0 &   $1$ &                  0 \\
& 0.125 & 0.25 & $x^3$ &  6423.533783310561 & $x^2$ & -3295.876989270898 \\*
            &  & & $x$ &  535.9280902425246 &   $1$ & -27.72322666992469 \\ 
& 0.25 & 0.375 & $x^3$ &  6249.063735441256 & $x^2$ & -6401.764292545093 \\*
            &  & & $x$ &  2121.584875855116 &   $1$ & -227.2933721204776 \\ 
 & 0.375 & 0.5 & $x^3$ & -10261.35309765599 & $x^2$ &  13461.12463292785 \\*
          &  & &   $x$ & -5810.249716786693 &   $1$ &  824.5923576584304 \\ 
 & 0.5 & 0.625 & $x^3$ & -291.9822638070393 & $x^2$ &  622.1814317187671 \\*
            &  & & $x$ & -448.3346409643185 &   $1$ &  107.1992658183961 \\ 
& 0.625 & 0.75 & $x^3$ & -4060.994255437327 & $x^2$ &  8349.124694297900 \\*
            &  & & $x$ & -5690.202791496491 &   $1$ &  1285.198591225143 \\ 
& 0.75 & 0.875 & $x^3$ & -528.1868779043331 & $x^2$ &  1361.218410522837 \\*
            &  & & $x$ & -1169.955815420823 &   $1$ &  335.3075313951340 \\ 
  & 0.875 & 1 & $x^3$ & -130.6441212212436 & $x^2$ &  371.4479197042459 \\*
            &  & & $x$ & -350.9634757447609 &   $1$ &  110.1596772617586 \\ 
\hline
$\psi_2$&0&0.125&$x^3$ &  181.2722522251391 & $x^2$ & -14.15090760709985 \\*
      &     &   &  $x$ &                  0 &   $1$ &                  0 \\
& 0.125 & 0.25 & $x^3$ &  2419.148109670498 & $x^2$ & -1245.470104784877 \\*
      &     &   & $x$ &  202.9293684766930 &   $1$ & -10.49765988775683 \\
& 0.25 & 0.375 & $x^3$ &  2576.675107217909 & $x^2$ & -2596.000954579617 \\*
      &     &   & $x$ &  848.6584813339236 &   $1$ & -89.98311932657152 \\
 & 0.375 & 0.5 & $x^3$ & -8471.127674575379 & $x^2$ &  10735.23758057596 \\*
      &     &   & $x$ & -4488.978621463715 &   $1$ &  619.5243500374206 \\
 & 0.5 & 0.625 & $x^3$ & -3086.030528892990 & $x^2$ &  6003.939215143344 \\*
      &     &   & $x$ & -3796.503115292892 &   $1$ &  782.9740450998639 \\
& 0.625 & 0.75 & $x^3$ &  12658.02295546319 & $x^2$ & -26273.32787303766 \\*
      &     &   & $x$ &  18100.01806795347 &   $1$ & -4137.807295812551 \\
& 0.75 & 0.875 & $x^3$ &  1896.924386457221 & $x^2$ & -4813.188115086536 \\*
      &     &   & $x$ &  4069.162266224350 &   $1$ & -1146.155599563828 \\
  & 0.875 & 1  & $x^3$ &  253.9044233453815 & $x^2$ & -726.4312505262515 \\*
      &     &   & $x$ &  691.1492310163587 &   $1$ & -218.6224038354886 \\
 \hline
$\psi_3$&-1&-0.75&$x^3$ & -12.77748177259837 & $x^2$ & -36.12926000986063 \\*
            &  &  & $x$ & -33.92607470192613 &   $1$ & -10.57429646466388 \\
  &-0.75 &-0.5 & $x^3$ & -153.4635771854685 & $x^2$ & -303.3692884353265 \\*
             &  & & $x$ & -197.3783313309066 &   $1$ & -42.19291944937926 \\ 
  &-0.5 &-0.375 & $x^3$ & -59.47798764121307 & $x^2$ &  2.242882836829032 \\*
             &  & & $x$ &  37.74464778305737 &   $1$ &  10.71372598259577 \\ 
 &-0.375 &-0.25 & $x^3$ &  1758.865605322497 & $x^2$ &  1729.275753199708 \\*
             &  & & $x$ &  565.9055972736512 &   $1$ &  61.79929755698429 \\ 
 &-0.25 &-0.125 & $x^3$ &  3444.276587955715 & $x^2$ &  1712.829251641442 \\*
             &  & & $x$ &  241.6677872507898 &   $1$ &  8.102298002304606 \\ 
    &-0.125 & 0 & $x^3$ & -5400.816676245629 & $x^2$ & -882.1760744370164 \\*
             &  & & $x$ &  7.530202490613302 &   $1$ &  2.106485345615197 \\ 
    & 0 & 0.125 & $x^3$ &  1247.729690468656 & $x^2$ & -454.3272932130656 \\*
             &  & & $x$ &  7.530202490613302 &   $1$ &  2.106485345615197 \\ 
  &0.125 & 0.25 & $x^3$ & -5778.096772948966 & $x^2$ &  3410.751764470566 \\*
             &  & & $x$ & -629.4039464575936 &   $1$ &  35.05371099919686 \\ 
  &0.25 &0.375 & $x^3$ & -558.2316506051182 & $x^2$ &  527.4560525803158 \\*
             &  & & $x$ & -166.4808009519398 &   $1$ &  17.96851407930143 \\ 
    &0.375 &0.5 & $x^3$ &  226.0107269480922 & $x^2$ & -258.2984965273795 \\*
             &  & & $x$ &  91.98285784857104 &   $1$ & -9.815156131403054 \\ 
  & 0.5 & 0.75 & $x^3$ &  35.52867654721048 & $x^2$ & -72.71045185132442 \\*
             &  & & $x$ &  49.25635097317727 &   $1$ & -11.03865756260972 \\ 
  &0.75 &    1 & $x^3$ &  1.382563578715178 & $x^2$ & -3.919859133343993 \\*
             &  & & $x$ &  3.692027530542450 &   $1$ &  -1.15473197591364 \\ 
 \hline
$\psi_4$&-1&-0.75 &$x^3$ &  6.603401663338006 & $x^2$ &  18.67158529263201 \\*
             &  & &  $x$ &  17.53296559525000 &   $1$ &  5.464781965955998 \\
  & -0.75 & -0.5 & $x^3$ &  79.28543622294657 & $x^2$ &  156.7295257413204 \\*
              &  & & $x$ &  101.9689429489431 &   $1$ &  21.79690680867349 \\ 
 & -0.5 & -0.375 & $x^3$ & -82.45595592132698 & $x^2$ & -161.0461033269963 \\*
              &  & & $x$ & -94.50064201116839 &   $1$ & -17.21165242233730 \\ 
 &-0.375 &-0.25  & $x^3$ & -2246.183853666682 & $x^2$ & -2216.018451884065 \\*
              &  & & $x$ & -722.9071965676484 &   $1$ & -77.98646222284469 \\ 
  &-0.25 &-0.125 & $x^3$ & -3800.606529757320 & $x^2$ & -1924.442967268418 \\*
              &  & & $x$ & -285.6652024928302 &   $1$ & -11.18728580653431 \\ 
     &-0.125 & 0 & $x^3$ &  11937.30882839563 & $x^2$ &  2667.744109956829 \\*
              &  & & $x$ &  124.6667844000623 &   $1$ & -0.910594592674766 \\ 
     & 0 & 0.125 & $x^3$ &  3136.965997055351 & $x^2$ & -1389.100592703720 \\*
              &  & & $x$ &  124.6667844000623 &   $1$ & -0.910594592674766 \\ 
  & 0.125 & 0.25 & $x^3$ & -7152.501310626270 & $x^2$ &  4271.385867623253 \\*
              &  & & $x$ & -808.1360506341049 &   $1$ &  47.34127467930285 \\ 
  & 0.25 & 0.375 & $x^3$ & -927.7525957207491 & $x^2$ &  901.3927836102403 \\*
              &  & & $x$ & -290.2798926723839 &   $1$ &  31.24010426928711 \\ 
  & 0.375 & 0.5 & $x^3$ &  154.3100993503755 & $x^2$ & -168.8298334848166 \\*
              &  & & $x$ &  55.89187066577802 &   $1$ & -5.136151388922590 \\ 
    & 0.5 & 0.75 & $x^3$ &  35.12445303753813 & $x^2$ & -71.05734845883623 \\*
              &  & & $x$ &  47.50862037442566 &   $1$ & -10.48944171063683 \\ 
     & 0.75 & 1  & $x^3$ &  1.892186966470384 & $x^2$ & -5.357215161018598 \\*
              &  & & $x$ &  5.037869422626043 &   $1$ & -1.572841228077830 \\ 
 \hline
$\psi_5$&-1&-0.75&$x^3$ & -0.001049843371405 & $x^2$ & -0.003037046895837 \\*
            &  &  & $x$ & -0.002924563677460 &   $1$ & -0.000937360153027 \\
& -0.75 & -0.5 &  $x^3$ & -0.135429794979682 & $x^2$ & -0.283795160078696 \\*
            &  &  & $x$ & -0.197295565112758 &   $1$ & -0.045480714648891 \\ 
& -0.5 & -0.375 & $x^3$ &  64.90438914754031 & $x^2$ &  92.49504460139021 \\*
             &  & & $x$ &  43.80167998946610 &   $1$ &  6.889274490088309 \\ 
&-0.375 & -0.25 & $x^3$ &  887.7169530732866 & $x^2$ &  874.0398882738802 \\*
             &  & & $x$ &  282.8362623376594 &   $1$ &  30.01300552998866 \\ 
& -0.25 &-0.125 & $x^3$ &  689.7712273209479 & $x^2$ &  312.8547139314197 \\*
             &  & & $x$ &  39.35849874499266 &   $1$ &  1.124736063345467 \\ 
  & -0.125 & 0 & $x^3$ & -677.4429634928239 & $x^2$ & -100.9319418261331 \\*
             &  & & $x$ &                  0 &   $1$ &                  0 \\ 
    & 0 & 0.125 & $x^3$ &  222.6292417447635 & $x^2$ &  78.22658535063456 \\*
             &  & & $x$ &                  0 &   $1$ &                  0 \\ 
 & 0.125 & 0.25 & $x^3$ &  13436.75267042187 & $x^2$ & -7191.190145000426 \\*
             &  & & $x$ &  1197.942146868526 &   $1$ & -61.96696676846536 \\ 
 & 0.25 & 0.375 & $x^3$ &  3309.658731654356 & $x^2$ & -3681.684248151221 \\*
             &  & & $x$ &  1342.019311962833 &   $1$ & -159.0945338018750 \\ 
  & 0.375 & 0.5 & $x^3$ &  1623.482149916625 & $x^2$ & -1948.015296157701 \\*
             &  & & $x$ &  753.1233433882986 &   $1$ & -93.13627378293758 \\ 
  & 0.5 & 0.75 & $x^3$ &  334.8296335785243 & $x^2$ & -668.4089674911632 \\*
             &  & & $x$ &  440.0064019753428 &   $1$ & -95.39782070083224 \\ 
     & 0.75 & 1 & $x^3$ &  23.73541224062620 & $x^2$ & -67.14137193635135 \\*
             &  & & $x$ &  63.07650715082408 &   $1$ & -19.67054745509894 \\ 
 \hline
$\psi_6$&-1&-0.75&$x^3$ &  29.39534138606089 & $x^2$ &  83.11643728617156 \\*
            &  &  & $x$ &  78.04685041416045 &   $1$ &  24.32575451404978 \\
 & -0.75 & -0.5 & $x^3$ &  351.2049102801593 & $x^2$ &  694.0258200020245 \\*
            &  & &  $x$ &  451.3572769791491 &   $1$ &  96.43545853732169 \\ 
 &  -0.5 &-0.375& $x^3$ &  1275.659605723797 & $x^2$ &  1615.831201469838 \\*
             &  & & $x$ &  679.8216368642327 &   $1$ &  95.77313004336493 \\ 
 &-0.375 &-0.25 & $x^3$ &  11255.59466789715 & $x^2$ &  11095.38855970805 \\*
             &  & & $x$ &  3579.204551188512 &   $1$ &  376.2646074570184 \\ 
 &-0.25 &-0.125 & $x^3$ &  5327.416704802243 & $x^2$ &  2249.311342113261 \\*
             &  & & $x$ &  267.6993104714105 &   $1$ &  8.640342704059652 \\ 
 & -0.125 &   0 & $x^3$ & -2957.628236410947 & $x^2$ & -374.9318262498545 \\*
             &  & & $x$ &                  0 &   $1$ &                  0 \\ 
 &  0  &  0.125 & $x^3$ & -46.42927388024598 & $x^2$ &  4.175934965984637 \\*
             &  & & $x$ &                  0 &   $1$ &                  0 \\ 
 & 0.125 & 0.25 & $x^3$ & -788.4860165790724 & $x^2$ &  412.3997958564289 \\*
             &  & & $x$ & -67.27205540860357 &   $1$ &  3.479838675245901 \\ 
 & 0.25 & 0.375 & $x^3$ & -231.1352598754021 & $x^2$ &  260.5391696188435 \\*
             &  & & $x$ & -95.84500917174907 &   $1$ &  11.40576068238651 \\ 
 & 0.375 &  0.5 & $x^3$ & -133.3790261589963 & $x^2$ &  159.4675576509780 \\*
             &  & & $x$ & -61.28221129495857 &   $1$ &  7.502793024182585 \\ 
 &  0.5 &  0.75 & $x^3$ & -26.46366034181517 & $x^2$ &  52.95015549352690 \\*
             &  & & $x$ & -34.95133350039340 &   $1$ &  7.602283939115114 \\ 
 &  0.75  &   1 & $x^3$ & -1.798568145773455 & $x^2$ &  5.088295556487342 \\*
             &  & & $x$ & -4.780886675654469 &   $1$ &  1.491159264940578 \\ 
 \hline
$\psi_{L_1}$&0&0.125& $x^3$ & -1425.503021368171 & $x^2$ &  772.9341296224186  \\*
                 &  & & $x$ & -56.55471808022630 &   $1$ &                  0  \\ 
 &  0.125  &  0.25  & $x^3$ &  16500.24187884111 & $x^2$ & -9088.459925693413  \\*
                 &  & & $x$ &  1568.524503551422 &   $1$ & -84.06184109786738  \\ 
 &   0.25  &  0.375 & $x^3$ &  3527.021590160490 & $x^2$ & -3847.552266045169  \\*
                 &  & & $x$ &  1380.549477854916 &   $1$ & -161.9182463911214  \\ 
 &  0.375  &    0.5 & $x^3$ &  1363.434660038630 & $x^2$ & -1645.715891236687  \\*
              &   &  & $x$ &  641.9354328937131 &   $1$ & -80.47581521996845  \\ 
 &     0.5 &   0.75 & $x^3$ &  281.9469114612293 & $x^2$ & -562.0738058307491  \\*
                 &  & & $x$ &  369.4091589208262 &   $1$ & -79.93723101283425  \\ 
 &    0.75 &      1 & $x^3$ &  20.47469809743168 & $x^2$ & -57.91380903680889  \\*
                 &  & & $x$ &  54.40352378132468 &   $1$ & -16.96441284194740  \\ 
            \hline                                             
$\psi_{L_2}$&0&0.125& $x^3$ &  3578.516685827048 & $x^2$ & -1483.144649002284  \\*
                 &  & & $x$ &  125.1644746095171 &   $1$ &                  0  \\ 
 &  0.125  &  0.25  & $x^3$ & -3221.524685507020 & $x^2$ &  2257.721533142182  \\*
                 &  & & $x$ & -491.3001316453148 &   $1$ &  31.88837248923357  \\ 
 & 0.25   & 0.375   & $x^3$ &  253.7811404150473 & $x^2$ & -437.2143373073622  \\*
            &   &   &   $x$ &  204.5479612190696 &   $1$ & -27.94181235379833  \\ 
 &   0.375 &    0.5 & $x^3$ &  859.2458117883292 & $x^2$ & -1011.673202197156  \\*
            &     & &   $x$ &  379.9617016508116 &   $1$ & -44.86748817002474  \\ 
 &   0.5 &   0.75   & $x^3$ &  179.5608184460906 & $x^2$ & -359.6822707980349  \\*
            &  &    &   $x$ &  237.7345152583694 &   $1$ & -51.79100365580411  \\ 
  &  0.75  &   1    & $x^3$ &  11.94553851645503 & $x^2$ & -33.79701898388310  \\*
            &     & &   $x$ &  31.75742241840201 &   $1$ & -9.905941950973904  \\ 
 \hline
$\psi_{R1}$&0&0.25& $x^3$ &  7.391479105551183  & $x^2$ & -1.275014819251857  \\*
             &  &  &  $x$ &                  0  &   $1$ &                  0  \\
& 0.25  & 0.5     & $x^3$ &  87.83823678183982  & $x^2$ & -89.99728641434138  \\*
              &  & & $x$ &  29.27736873324082  &   $1$ & -3.031180797309012  \\ 
&   0.5 &  0.625  & $x^3$ &  780.1233069697320  & $x^2$ & -1281.837786515931 \\*
          &  &   &   $x$ &  701.9040661939105  &   $1$ & -127.9200382757332  \\ 
&  0.625 &  0.75 &  $x^3$ &  8761.559020997937  & $x^2$ & -17645.25953421656  \\*
            &  & &    $x$ &  11802.93627344290  &   $1$ & -2622.696251230958  \\ 
&   0.75 &  0.875 & $x^3$ &  7584.571826637600  & $x^2$ & -19150.38737314525  \\*
              &  & &  $x$ &  16046.79392231900  &   $1$ & -4462.413605869881  \\ 
&  0.875 &    1 &   $x^3$ & -18377.05522246807  & $x^2$ &  51055.94173564137  \\*
              &  & & $x$ & -47183.66988964301  &   $1$ &  14504.78337646970  \\  \hline
$\psi_{R2}$&0&0.25& $x^3$ &  28.49333825597376  & $x^2$ & -4.913915083023494  \\*
              &  &  & $x$ &                  0  &   $1$ &                  0  \\
    & 0.25  & 0.5 & $x^3$ &  340.6257513882141  & $x^2$ & -348.7311999647503  \\*
              &  & & $x$ &  113.3838149785686  &   $1$ & -11.73444239472556  \\ 
  & 0.5 & 0.625  & $x^3$ &  1001.089085127504  & $x^2$ & -1771.030876846587  \\*
              &  & & $x$ &  1040.335991555937  &   $1$ & -202.1935281803620  \\ 
  & 0.625 & 0.75 & $x^3$ &  7909.591723311074  & $x^2$ & -15934.19401753424  \\*
              &  & & $x$ &  10648.38838829413  &   $1$ & -2361.386826210905  \\ 
  & 0.75 & 0.875 & $x^3$ &  1810.118176484035  & $x^2$ & -4899.457987996012  \\*
              &  & & $x$ &  4389.145954257418  &   $1$ & -1300.778614730966  \\ 
  & 0.875 &  1    & $x^3$ &  7611.345386975026  & $x^2$ & -20790.59460977267  \\*
              &  & & $x$ &  18873.94129327007  &   $1$ & -5694.692070472430  \\ 
\end{longtable}

\end{document}